\documentclass[11pt]{amsart}

\usepackage[margin=1in]{geometry}
\usepackage[T1]{fontenc}
\usepackage[compress]{cite}
\usepackage[utf8]{inputenc}
\usepackage{amsmath,amssymb,amsthm,mathtools}
\usepackage{microtype,enumerate,enumitem}
\usepackage[
colorlinks=true,
linkcolor=blue,
citecolor=blue,
urlcolor=blue
]{hyperref}

\newcommand{\R}{\mathbb R}
\newcommand{\dd}{\,\mathrm d}
\newcommand{\supp}{\operatorname{supp}}
\newcommand{\diver}{\operatorname{div}}
\newcommand{\PV}{\operatorname{p.v.}}
\newcommand{\Sch}{\mathcal S}
\newcommand{\norm}[1]{\left\|#1\right\|}
\newcommand{\pair}[2]{\left\langle #1,#2\right\rangle}
\newcommand{\ip}[2]{\left\langle #1,#2\right\rangle}

\newtheorem{theorem}{Theorem}[section]
\newtheorem{lemma}[theorem]{Lemma}

\newtheorem{proposition}[theorem]{Proposition}
\newtheorem{corollary}[theorem]{Corollary}
\theoremstyle{remark}
\newtheorem{remark}[theorem]{Remark}
\numberwithin{equation}{section}

\begin{document}

\title[]{Liouville-type theorems and perturbation uniqueness for stationary incompressible MHD with directional fractional dissipation}
\author{Yong Zeng}
\address{School of Mathematics and Statistics, Chongqing Technology and Business University, Chongqing, 400067, China}
\address{Chongqing Key Laboratory of Statistical Intelligent Computing and Monitoring, Chongqing Technology and Business University}
\email{yzeng@ctbu.edu.cn}
\keywords{Liouville-type theorems; stationary MHD; directional fractional dissipation; perturbation}
\subjclass[2020]{35B53, 35Q30, 35Q60, 35R11}

	\maketitle
	
	\begin{abstract}
		We study Liouville-type theorems for stationary solutions of the incompressible
		magnetohydrodynamics equations in $\mathbb{R}^n$, with directional	fractional dissipation $(-\partial_i^2)^{\alpha}u$ and
		$(-\partial_j^2)^{\beta}b$, where $n\ge 2$ and $0<\alpha,\beta\le 1$.
		We prove that $u\equiv b\equiv 0$ by assuming that
		$u\in L^{p,\infty}(\mathbb{R}^n)$ and
		$b\in L^{q,\infty}(\mathbb{R}^n)$, with suitable conditions on $p$ and $q$.
		The proof uses a localized fractional energy identity and does not assume
		finiteness of the global homogeneous Sobolev energy. The method also applies to the case of full fractional dissipation.
		For the system with common-directional fractional dissipation, we prove
		stationary uniqueness for compactly supported perturbations of arbitrary smooth
		transverse shear backgrounds when $0<\alpha,\beta<1$.
		We also establish a quantitative uniqueness criterion for perturbations of
		backgrounds with bounded gradients that are not necessarily compactly supported,
		for all $0<\alpha,\beta\le 1$, assuming directional localization.
	\end{abstract}
	
		\tableofcontents
	\section{Introduction and main results}
	
	Let $n\ge 2$ and $0<\alpha,\beta\le1$. We consider the stationary incompressible
	fractional MHD system
	\begin{equation}\label{eq:system1}
		\begin{cases}
			(-\Delta)^{\alpha}u+(u\cdot\nabla)u
			-(b\cdot\nabla)b+\nabla P=0,\\
			(-\Delta)^{\beta}b+(u\cdot\nabla)b
			-(b\cdot\nabla)u=0,\\
			\diver u=\diver b=0
		\end{cases}
		\qquad\text{in }\R^n,
	\end{equation}
	and its directional counterpart
	\begin{equation}\label{eq:system2}
		\begin{cases}
			(-\partial_i^2)^{\alpha}u+(u\cdot\nabla)u
			-(b\cdot\nabla)b+\nabla P=0,\\
			(-\partial_j^2)^{\beta}b+(u\cdot\nabla)b
			-(b\cdot\nabla)u=0,\\
			\diver u=\diver b=0
		\end{cases}
		\qquad\text{in }\R^n,
	\end{equation}
	where $i,j\in\{1,\ldots,n\}$ need not coincide. Here $u=(u_1(x),u_2(x),\cdots,u_n(x))$ denotes the velocity field, $b=(b_1(x),b_2(x),\cdots,b_n(x))$ denotes the magnetic field, and $P=P(x)$ denotes the pressure.	We write
	\[
	\Lambda=(-\Delta)^{1/2},
	\qquad
	\Lambda_j=(-\partial_j^2)^{1/2},
	\quad j=1,\ldots,n.
	\]
	Thus, for $0<\gamma<1$, $\Lambda^{2\gamma}=(-\Delta)^\gamma$,
	$\Lambda_j^{2\gamma}=(-\partial_j^2)^\gamma$.
	At the classical endpoint $\gamma=1$, these operators are
	$\Lambda^2=-\Delta$, $\Lambda_j^2=-\partial_j^2$.
	For $f\in\mathcal S(\mathbb R^n)$ and $0<\gamma<1$, the
	fractional Laplacians have the singular-integral representations
	\begin{equation}\label{eq:def-Lambda}
		\Lambda^{2\gamma}f(x)
		=
		c_{n,\gamma}\,\PV
		\int_{\mathbb R^n}
		\frac{f(x)-f(x+y)}{|y|^{n+2\gamma}}\,\dd y,
	\end{equation}
	and
	\begin{equation}\label{eq:def-Lambdaj}
		\Lambda_j^{2\gamma}f(x)
		=
		c_{\gamma}\,\PV
		\int_{\mathbb R}
		\frac{f(x)-f(x+he_j)}{|h|^{1+2\gamma}}\,\dd h,
	\end{equation}
	where
	\[
	c_{n,\gamma}
	=
	\frac{4^\gamma\Gamma(n/2+\gamma)}
	{\pi^{n/2}|\Gamma(-\gamma)|},
	\qquad
	c_{\gamma}
	=
	\frac{4^\gamma\Gamma(1/2+\gamma)}
	{\sqrt{\pi}\,|\Gamma(-\gamma)|}.
	\]
	For Lebesgue or weak-Lebesgue fields $v$, the operators are
	understood distributionally by duality:
	\[
	\pair{\Lambda_j^{2\gamma}v}{\phi}
	=
	\int_{\mathbb R^n}v\cdot\Lambda_j^{2\gamma}\phi\,\dd x,
	\qquad
	\phi\in C_c^\infty(\R^n),
	\]
	and analogously for $\Lambda^{2\gamma}$. 
	
	The Liouville problem for the stationary incompressible
	Navier--Stokes equations asks which conditions force
	a smooth solution to vanish. In three dimensions, a central open
	question is whether every smooth solution satisfying
	\[
	-\Delta u+(u\cdot\nabla)u+\nabla P=0,
	\qquad
	\diver u=0,
	\]
	together with
	\[
	u(x)\longrightarrow0
	\quad\text{as }|x|\longrightarrow\infty,
	\qquad
	\int_{\R^3}|\nabla u|^2\,\dd x<\infty,
	\]
	must be trivial. A classical sufficient condition is
	$u\in L^{9/2}(\R^3)$, given by Galdi \cite{Galdi}.
	The exponent $9/2$ arises naturally from the cubic terms 
	in the localized energy identity, whereas finite Dirichlet energy	provides the larger Sobolev exponent $6$. This mismatch motivates	the search for additional conditions on integrability, oscillation,	pressure, or spatial distribution. Using the head pressure	$Q=P+\frac12|u|^2$, Chae \cite{Chae2014} proved triviality under
	$\Delta u\in L^{6/5}(\R^3)$, an assumption with the same scaling
	as finite Dirichlet energy. Chae and Wolf
	\cite{ChaeWolf2016} obtained a logarithmic improvement of
	$L^{9/2}(\R^3)$ and further criteria involving the head pressure.
	Additional developments of the pressure approach appear in
	\cite{Chae2020,Chae2025,Chae2026}. A different direction was pursued
	by Seregin \cite{Seregin2016}, who established a Liouville theorem
	under $u\in L^6(\R^3)\cap\mathrm{BMO}^{-1}(\R^3)$, and subsequently
	developed criteria involving homogeneous Morrey spaces
	\cite{Seregin2019}. Chamorro, Jarr\'in, and
	Lemari\'e-Rieusset \cite{ChamorroJarrinLemarie2021} proved further
	Lebesgue and Morrey-space results, while Jarr\'in \cite{Jarrin2020}
	studied Lorentz and Morrey formulations. Related integrability and
	large-scale criteria were obtained by Kozono, Terasawa, and
	Wakasugi \cite{KozonoTerasawaWakasugi2017}, Seregin and Wang
	\cite{SereginWang2020}, and Tsai \cite{Tsai2021}.
	Chae and Wolf \cite{ChaeWolf2019} established further stationary
	Navier--Stokes Liouville results, and Chae
	\cite{ChaeAnisotropicNS2023} considered anisotropic assumptions.
	Recently, Dong, Ma, and Zhang \cite{DMZ}  obtained Liouville-type results under conditions involving the velocity gradient. See \cite{ChaeAnisotropicNS2023} for result in anisotropic Lebesgue spaces.
	
	For the stationary incompressible magnetohydrodynamics (MHD)
	equations, the velocity and magnetic field must be considered
	together:
	\[
	\begin{cases}
		-\Delta u+(u\cdot\nabla)u-(b\cdot\nabla)b+\nabla P=0,\\
		-\Delta b+(u\cdot\nabla)b-(b\cdot\nabla)u=0,\\
		\diver u=\diver b=0.
	\end{cases}
	\]
	The divergence-free conditions yield cancellations between the
	two equations, but localization leaves mixed fluxes of the form
	$|u||b|^2$, in addition to the velocity flux $|u|^3$.
	The pressure is determined, up to a harmonic contribution, by the
	combined stress $u\otimes u-b\otimes b$. Consequently, stationary
	MHD Liouville criteria must account for the interaction between
	the integrability of the two fields. Chae and Wolf
	\cite{ChaeWolfMHD2021} studied Liouville theorems for stationary
	MHD and Hall--MHD. For the MHD case, they proved that  if there exist smooth matrix valued potential functions $\Phi,\Psi$ such that $\nabla\cdot\Phi=u$ and $\nabla\cdot \Psi=b$, whose $L^6$ mean oscillations have certain growth condition near infinity, then $u=b=0$. Chen, Li, and Wang
	\cite{ChenLiWang2022} obtained further studied these systems. They proved that $u=b=0$ provided that $u\in BMO^{-1}(\R^3)$, $b\in L^6(\R^3)$ for the MHD equations and $u\in L^6(\R^3)$, $b\in L^3(\R^3)$ for the MHD equations with partial viscosity or diffusivity. Later, Wang  and Guo \cite{WG} proved that  $u=b=0$ provided that $u \in BMO^{-1}(\mathbb{R}^3)$ and $b \in L^{p,\infty}(\mathbb{R}^3)$ with $2 < p \le 6$. Moreover, they also prove that if
	\[
	u \in L^p(\mathbb{R}^3), \ b \in L^q(\mathbb{R}^3) \quad \text{with} \quad \frac{2}{3}\le\frac{1}{p} + \frac{2}{q} <1, \quad 3 \le p \le \frac{9}{2}, \quad 2 \le q \le 6,
	\]
	then $u$ and $b$ are vanishing for the stationary 3D MHD equations with partial viscosity and partial diffusivity.
	Cho, Neustupa, and Yang \cite{ChoNeustupaYang2024} developed a
	common treatment of the stationary Navier--Stokes, MHD, and
	Hall--MHD equations under conditions on the growth of suitable
	Lebesgue norms. Chae \cite{ChaeAnisotropicMHD2023} investigated
	anisotropic Liouville criteria for MHD in $\R^n$, and Chae and Lee
	\cite{ChaeLee2024} established further results for stationary MHD
	in $\R^3$ under coupled integrability assumptions. Zhang \cite{Zhang} obtained  Liouville-type theorems by assume the Lebesgue norms of the velocity and
	magnetic field on the annulus satisfy certain growth conditions. Wang and Yang \cite{WangYang2025} proved Liouville
	theorems for the three-dimensional stationary MHD and Hall--MHD
	equations with nonzero constant vectors at infinity.
	
	Fractional dissipation changes both the natural energy space and
	the localization argument. To compare the conventions used in the
	literature, consider
	\[
	(-\Delta)^\alpha u+(u\cdot\nabla)u+\nabla P=0,
	\qquad
	\diver u=0,
	\]
	where $\alpha$ denotes the order of dissipation.
	Wang and Xiao \cite{WangXiao2018} proved that the smooth solution $u\in\dot{H}^\alpha(\R^n)\cap L^{3n/(n-1)}(\R^n)$ of the stationary fractional Navier–Stokes system is trivial for $\alpha\in (0,1)$. 
	Yang	\cite{Yang2022} proved the same result for $5/6\le \alpha<1$ for the three-dimensional case\footnote{Their theorem statement does not require \(u\in\dot H^\alpha(\mathbb R^3)\), but this condition is used in the proof.}.
	Chamorro and Poggi \cite{ChamorroPoggi2025} obtained an almost sharp
	Liouville theorem for stationary fractional Navier--Stokes
	equations under assumptions $u\in\dot{H}^\alpha(\R^3)\cap L^{\frac{6-\varepsilon}{3-2\alpha}}(\R^3)$ for $1/2<\alpha<1$ and $u\in \dot{H}^\alpha(\R^3)\cap L^{\frac{6-\varepsilon}{3-2\alpha}}(\R^3)\cap L^{\frac{6+\varepsilon}{3-2\alpha}}(\R^3)$ for $3/10<\alpha<1/2$ for small $\varepsilon$. Jarr\'in and Vergara-Hermosilla
	\cite{JarrinVergara2024} obtained the Liouville-type theorem for $u\in \dot{H}^\alpha(\R^3)\cap L^p(\R^3)$ for $\max(3/(2\alpha-1),3/\alpha)<p\le 9/2$. Wang, Yang, and Yu \cite{WangYangYu2025} studied
	Liouville theorems under assumption $\nabla u\in L^{1+1/\alpha}(\R^3)$ for a system with aymptotic condition $u\to u_\infty$ as $|x|\to\infty$. More recently, Lee and Lee
	\cite{LeeLee2026} proved results under velocity integrability or gradient integrability along with a large-scale Morrey-type condition 
	\[\limsup_{r \to \infty} r^{-\lambda} \left( \int_{A(r)} \left| \Lambda^\alpha u \right|^2 dx \right)^{\frac{1}{2}} < +\infty \quad\text{for some}\quad 0\le\lambda<\alpha.
	\]
	Eguchi and  Vergara-Hermosilla \cite{EH} establish a Liouville type result with assumptions that $u\in \dot{H}^\alpha(\R^3)$ and the radial component of the velocity field $u_\rho \in L^r(\mathbb{R}^3)$ for $3/(2\alpha) \le r \le 3/(2\alpha-1)$.

	For the fractional MHD system, the two dissipation exponents may be
	different:
	\[
	\begin{cases}
		(-\Delta)^\alpha u+(u\cdot\nabla)u
		-(b\cdot\nabla)b+\nabla P=0,\\
		(-\Delta)^\beta b+(u\cdot\nabla)b
		-(b\cdot\nabla)u=0,\\
		\diver u=\diver b=0.
	\end{cases}
	\]
	Here the energy spaces
	$\dot H^\alpha$ and $\dot H^\beta$ yield different Sobolev
	exponents, and the mixed flux links the admissible ranges for
	the two fields. 
	Zeng \cite{Zeng2025} established
	Liouville theorems for the three-dimensional stationary fractional	MHD system under anisotropic Lebesgue assumptions and finite	fractional energy. As a consequence, smooth finite-energy solutions vanish when both dissipation orders, $\alpha$ and
	$\beta$, belong to $[1/2,5/6]$. Lee and Lee \cite[Appendix~A]{LeeLee2026} extended their
	kernel and bootstrap arguments to a further unequal-order
	regime. Under assumptions
	\[\limsup_{r \to \infty} \left( r^{-\lambda_1} \left( \int_{A(r)} \left| \Lambda^\alpha u \right|^2 dx \right)^{\frac{1}{2}} + r^{-\lambda_2} \left( \int_{A(r)} \left| \Lambda^\beta B \right|^2 dx \right)^{\frac{1}{2}} \right) < +\infty,
	\]
	for some $0\le \lambda_1<\alpha$ and $0\le \lambda_2<\beta$, they obtained vanishing result when
	\[
	\frac{1}{2}\le \alpha<5/2-2\beta,\qquad \frac56<\beta<1,
	\]
	with an additional  assumption $(u,b)\in L^{\frac{6}{3-2\alpha}}(\R^3)\times L^{\frac{6}{3-2\beta}}(\R^3)$. 
	
    We note that all the above results concerning the fractional Navier–Stokes and fractional MHD equations rely on assumptions on $\Lambda^\alpha u$ or $\nabla u$. Therefore, our primary interest is in establishing Liouville-type theorems for \eqref{eq:system1} and \eqref{eq:system2} with full or directional fractional dissipation, without imposing any integrability assumption on $\Lambda^\alpha u$ and $\Lambda^\beta b$. Our first result states as follows.

	\begin{theorem}\label{thm:main}
		Let $0<\alpha,\beta\le1$. Let $(u,b,P)$ be a smooth solution of
		\eqref{eq:system1} or \eqref{eq:system2}. Suppose
		$$
		u\in L^{p,\infty}(\R^n),
		\qquad b\in L^{q,\infty}(\R^n),
		$$
		where $p$ and $q$ satisfy
		\begin{equation}\label{eq:ranges}
				3<p<
				\min\left\{\frac{3n}{n-1},\frac{2n}{n-2\alpha}\right\},
				\quad
				2<q<\frac{2n}{n-2\beta},\quad
				\frac{n-1}{n}<\frac1p+\frac2q<1.
		\end{equation}
		Then $u=b=0$ and $\nabla P=0$.
	\end{theorem}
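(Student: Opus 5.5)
We test the velocity equation against $u\phi_R$ and the magnetic equation against $b\phi_R$, where $\phi_R(x)=\phi(x/R)$ and $\phi\in C_c^\infty$ equals $1$ on $B_1$ and is supported in $B_2$. Adding the two identities, the transport terms $(u\cdot\nabla)u\cdot u$ and $(u\cdot\nabla)b\cdot b$ are integrated by parts onto $\nabla\phi_R$. The stretching terms $-(b\cdot\nabla)b\cdot u-(b\cdot\nabla)u\cdot b$ combine into $-\int (b\cdot\nabla\phi_R)(u\cdot b)$. The pressure term becomes $\int P\,u\cdot\nabla\phi_R$. This leaves
\[
\pair{\Lambda_i^{2\alpha}u}{u\phi_R}+\pair{\Lambda_j^{2\beta}b}{b\phi_R}
=\int\Bigl(\tfrac12|u|^2+\tfrac12|b|^2+P\Bigr)u\cdot\nabla\phi_R-\int (b\cdot\nabla\phi_R)(u\cdot b).
\]
For the pressure we take the normalization $P=\sum_{k,l}R_kR_l(u_ku_l-b_kb_l)$. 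By Calder\'on--Zygmund in Lorentz spaces this gives $P\in L^{p/2,\infty}+L^{q/2,\infty}$; this is legitimate since $p,q>2$. We justify it by a Liouville argument: $\nabla(P-\tilde P)$ is harmonic and, in the distributional sense, lies in a space of decaying distributions, so it vanishes.

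The right-hand side is bounded by H\"older's inequality in Lorentz spaces on the annulus $A_R=B_{2R}\setminus B_R$. Using $|\nabla\phi_R|\lesssim R^{-1}$ and $\|f\|_{L^{r,\infty}(A_R)}$-type bounds, the cubic terms give
\[
R^{-1}\Bigl(R^{n(1-3/p)}\|u\|_{L^{p,\infty}(A_R)}^3+R^{n(1-1/p-2/q)}\|u\|_{L^{p,\infty}}\|b\|_{L^{q,\infty}(A_R)}^2\Bigr).
\]
These bounds use weak-type H\"older, valid since $3/p<1$ and $1/p+2/q<1$. Both exponents are $\le 0$ exactly when $p\le 3n/(n-1)$ and $1/p+2/q\ge (n-1)/n$. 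To pass from boundedness to decay we use the strict inequalities together with a Lorentz refinement. Alternatively, we iterate: once the dissipation term is shown to be bounded, we upgrade the a priori integrability, e.g. $u\in L^{p,\infty}\cap\dot H^\alpha$-type local control, and interpolate, which makes the flux $o(1)$ on annuli.

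The main obstacle is the dissipation term, because we assume no global $\dot H^\alpha$ bound. We use the localized fractional energy identity: for $0<\gamma<1$,
\[
\pair{\Lambda_i^{2\gamma}v}{v\phi_R}
=\frac{c_\gamma}{2}\iint \frac{|v(x)-v(x+he_i)|^2}{|h|^{1+2\gamma}}\,\phi_R(x)\,\dd h\,\dd x
+\frac12\int |v|^2\Lambda_i^{2\gamma}\phi_R\,\dd x .
\]
This identity is obtained by symmetrizing the singular integral and is rigorously valid for smooth $v$ with the polynomial-in-$L^{p,\infty}$ decay. The first term is nonnegative and dominates the localized Gagliardo energy $E_R$. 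The second term uses $|\Lambda_i^{2\gamma}\phi_R|\lesssim R^{-2\gamma}(1+|x|/R)^{-1-2\gamma}$; note that the decay is only in the $x_i$ direction, so we bound it crudely by $R^{-2\gamma}$ times a function that is bounded and supported nowhere compactly. The term $\int|u|^2|\Lambda_i^{2\alpha}\phi_R|$ is then controlled by splitting into the dyadic regions $|x|\sim 2^kR$ and using $u\in L^{p,\infty}$. The pieces sum to $R^{n(1-2/p)-2\alpha}$ times a convergent series. The series converges because the one-dimensional tail $(2^k)^{-1-2\alpha}$ beats the growth $2^{kn(1-2/p)}$; this needs care, and in the directional case we instead integrate the kernel tail along lines. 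The exponent $n(1-2/p)-2\alpha$ is negative precisely when $p<2n/(n-2\alpha)$, and similarly for $b$ with $q<2n/(n-2\beta)$. The case $\gamma=1$ is the classical identity $\int|\partial_iv|^2\phi_R-\frac12\int|v|^2\partial_i^2\phi_R$.

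Letting $R\to\infty$ shows that $\iint |u(x)-u(x+he_i)|^2|h|^{-1-2\alpha}\,\dd h\,\dd x=0$, and likewise for $b$ in direction $j$. Hence $u$ is constant along $e_i$ and $b$ along $e_j$. Such functions lie in $L^{p,\infty}(\R^n)$ only if they vanish, since a nonzero function constant on lines has infinite weak norm. So $u=b=0$, and then $\nabla P=0$ from the first equation.
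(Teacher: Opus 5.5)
Your argument is correct, but it reaches the conclusion by a different route from the paper's, in two places.

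\textbf{Localization.} You test with $\phi_R u$ and use the pointwise identity $2v\cdot\Lambda_i^{2\gamma}v=\Lambda_i^{2\gamma}|v|^2+c_\gamma\int_{\R}|v(x)-v(x+he_i)|^2|h|^{-1-2\gamma}\,\dd h$. Your cutoff error $\frac12\int|v|^2\Lambda_i^{2\gamma}\phi_R$ is therefore linear in the cutoff. The paper instead tests with $\eta_R^2u$ and uses the commutator identity of Lemma~\ref{lem:localized}, $\pair{\Lambda_i^{2\gamma}v}{\eta^2v}=\|\Lambda_i^\gamma(\eta v)\|_2^2-(\text{cross term})$, whose error is controlled by the quadratic kernel $K_{\gamma,i,\eta_R}$. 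Both errors scale like $R^{-2\gamma}$ with a $(1+|x|/R)^{-1-2\gamma}$ tail. They give the same decay $R^{n(1-2/p)-2\gamma}$ and the same range $p<2n/(n-2\alpha)$. Your crude dyadic sum converges without using the cylindrical support, since $n(1-2/p)<2\alpha<1+2\alpha$.

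\textbf{Conclusion step.} Here your route is more elementary. Since $\phi_R\ge\mathbf 1_{B_R}$ (for $\phi\ge0$), Fatou's lemma gives vanishing of the global directional Gagliardo seminorm. Hence $u$ is invariant along $e_i$, and a nonzero smooth field of that form has $\mu_u(t)=\infty$ for small $t>0$. The paper only extracts $\|\Lambda_i^\alpha(\eta_Ru)\|_2\to0$. It then passes through partial Fourier transforms, $\supp\mathcal F_{x_i}u\subset\{\xi_i=0\}$, and the slices $g_\theta\in L^{p,\infty}(\R)$ to exclude polynomials, all of which you bypass. The paper's quantity $\|\Lambda_i^\alpha(\eta_R u)\|_2$ is what it reuses in Section~\ref{sec:noncompact}; your weighted Gagliardo energy would serve there equally well via Fatou.

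\textbf{Two corrections.} First, the strict inequalities in \eqref{eq:ranges} already make the flux exponents $n-1-3n/p$ and $n-1-n(1/p+2/q)$ strictly negative. So the fluxes are $O(R^{-\epsilon})$ directly, and neither a ``Lorentz refinement'' nor the iteration you sketch is needed; drop the latter, since the paper explains why such Caccioppoli-type schemes do not close here. Second, ``valid for smooth $v$ with polynomial decay'' is not a justification of your identity. It should be proved via the truncations $T_{i,\gamma,\varepsilon}$, symmetric changes of variables, and Lorentz H\"older against $L^{a,1}$ majorants, exactly as in Lemmas~\ref{lem:truncation}--\ref{lem:decay_cutoff_kernel}.
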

	
	Some remarks are in order.
	
	\begin{remark}
		The assertion applies only when \eqref{eq:ranges} is nonempty.
		In particular, $p>3$ and $p<2n/(n-2\alpha)$ require
		$\alpha>n/6$. Since $\alpha\le1$, the theorem has no admissible
		exponents when $n\ge6$.
	\end{remark}
	
	\begin{remark}[Strong-space endpoints]\label{rem:endpoint}
		For strong-space assumptions $u\in L^p(\R^n)$ and
		$b\in L^q(\R^n)$, the conclusion also holds under
		\begin{equation}\label{eq:strong-range}
		3\le p\le
		\min\left\{\frac{3n}{n-1},\frac{2n}{n-2\alpha}\right\},
		\quad
		2<q\le\frac{2n}{n-2\beta},
		\quad
		\frac{n-1}{n}\le\frac{1}{p}+\frac{2}{q}\le1.
		\end{equation}
		In the estimates \eqref{eq:u-flux}--\eqref{eq:PB-flux} below, ordinary H\"older's inequality and vanishing strong-space annular norms replace the weak-space estimates, ensuring the validity of the endpoint estimates. Also, use the endpoint case of the cut-off error Lemma \ref{lem:cutoff-error} for $\alpha,\beta<1$ and \eqref{eq:classical-cutoff-end} for $\alpha=1$ or $\beta=1$.
	\end{remark}
	
	\begin{remark}[Comparison with Chen--Li--Wang \cite{ChenLiWang2022} and Wang--Guo\cite{WG}]
		Chen, Li, and Wang \cite{ChenLiWang2022} and Wang and Guo
		\cite{WG} establish Liouville theorems for classical MHD systems in
		$\mathbb R^3$. Their full-dissipation results assume
		$u\in\mathrm{BMO}^{-1}(\mathbb R^3)$ and impose Lorentz-space
		conditions on the magnetic field: Chen--Li--Wang require
		$b\in L^{6,\infty}(\mathbb R^3)$, while Wang--Guo allow
		$b\in L^{r,\infty}(\mathbb R^3)$ for $2<r\le 6$.  For the full-dissipation case, our main purpose is not to refine the results of Chen–Li–Wang and Wang–Guo, but rather to provide a different unified treatment that simultaneously yields results for both the fractional and classical cases.
		
		For partial diffusion, Chen--Li--Wang assume
		$u\in L^3(\mathbb R^3)$ and $b\in L^6(\mathbb R^3)$, with a
		positive amount of viscosity and magnetic diffusivity. Wang--Guo
		consider the system with diffusion in the same single coordinate
		direction in both equations and assume
		\[
		u\in L^p(\mathbb R^3),\quad b\in L^q(\mathbb R^3),\quad
		3\le p\le\frac92,\quad 2< q\le6,\quad
		\frac{2}{3}\le\frac1p+\frac2q\le 1.
		\]
		By contrast, the present main theorem is formulated in $\mathbb R^n$,
		with $n\ge3$, and treats full or single-directional fractional
		dissipation, with possibly different orders $\alpha,\beta$ and
		different dissipative directions. Its weak-Lorentz hypotheses are $u\in L^{p,\infty}(\mathbb R^n)$ and
		$b\in L^{q,\infty}(\mathbb R^n)$ with \eqref{eq:ranges}.
		Thus, at the classical three-dimensional endpoint
		$\alpha=\beta=1$, the main theorem gives a weak-Lorentz result
		for the strict interior range
		\[
		3<p<\frac92,\qquad 2<q<6,\qquad
		\frac23<\frac1p+\frac2q<1.
		\]
		As noted in Remark \eqref{rem:endpoint}, the admissible range \eqref{eq:strong-range} of indices in our strong-space results agrees with that obtained by Wang and Guo.
		
		The proof strategies also differ. Both Chen--Li--Wang and Wang--Guo
		derive localized Caccioppoli estimates, use potential
		representations and local Stokes estimates to control pressure,
		and apply an iteration to obtain finite Dirichlet energy before
		proving vanishing. The present proof instead localizes the
		fractional dissipation directly, estimates the cutoff error from
		the weak-Lorentz norms, and does not first establish a global
		homogeneous Sobolev or Dirichlet-energy bound. Vanishing of the
		localized dissipation then confines Fourier support to the zero set
		of the dissipation symbol; the finite weak-Lebesgue assumptions
		exclude nonzero polynomial solutions.
	\end{remark}
	
	\begin{remark}[Obstacle to a Caccioppoli--iteration argument]
		Let
		\[
		\mathcal D(r)
		:=\|\Lambda^\alpha u\|_{L^2(B_r)}^2
		+\|\Lambda^\beta b\|_{L^2(B_r)}^2 .
		\]
		The classical Caccioppoli argument relies on an estimate of the form
		\[
		\mathcal D(\rho)
		\le \theta \mathcal D(\tau)
		+\mathcal R(\rho,\tau),
		\qquad 0<\theta<1,
		\]
		where $\mathcal R$ contains only lower-order cutoff terms.  This
		allows one to apply Giaquinta's iteration lemma.
		
		For $0<\gamma<1$, however, testing a fractional dissipative term
		with $\eta^2 v$ gives
		\[
		\left\langle \Lambda^{2\gamma}v,\eta^2v\right\rangle
		=
		\|\Lambda^\gamma(\eta v)\|_2^2
		-
		c_\gamma
		\iint
		\frac{|\eta(x)-\eta(y)|^2v(x)\cdot v(y)}
		{|x-y|^{n+2\gamma}}\,\dd x\,\dd y ,
		\]
		with the analogous formula for directional fractional dissipation.
		Consequently,
		\[
		\|\Lambda^\gamma(\eta v)\|_2^2
		\le
		\left\langle \Lambda^{2\gamma}v,\eta^2v\right\rangle
		+
		C\int_{\mathbb R^n}|v(x)|^2K_{\gamma,\eta}(x)\,\dd x,
		\]
		where
		\[
		K_{\gamma,\eta}(x)
		=
		\int_{\mathbb R^n}
		\frac{|\eta(x)-\eta(y)|^2}{|x-y|^{n+2\gamma}}\,\dd y .
		\]
		The last term is genuinely nonlocal: even if $\eta=1$ on
		$B_\rho$ and $\supp\eta\subset B_\tau$, it involves values of $v$
		outside $B_\tau$.  Although it can be bounded by
		\[
		\int_{\mathbb R^n}|v|^2K_{\gamma,\eta_\tau}
		\lesssim
		\tau^{\,n-\frac{2n}{r}-2\gamma}
		\|v\|_{L^{r,\infty}}^2,
		\]
		this is a global Lorentz-norm term rather than a multiple of
		$\mathcal D(\tau)$.  Hence it cannot be absorbed into
		$\theta\mathcal D(\tau)$ in the classical iteration.
		
		In addition, the local stationary--Stokes estimate used for the
		pressure in the classical case has no direct fractional analogue
		under only weak-Lorentz assumptions.  Thus the pressure term cannot
		be converted into a lower-order term plus a fraction of
		$\mathcal D(\tau)$.  Therefore 	the local fractional Caccioppoli inequality required for the	Giaquinta iteration fails to close.  This is why the present proof uses an
		expanding-cutoff argument, a global Riesz-transform representation	of the pressure, and Fourier support localization instead.
	\end{remark}

	\bigskip
	
	Without global integrability or decay, the directional system
	admits nonzero shear solutions. Consider the common-direction
	system
	\begin{equation}\label{eq:MHD}
		\begin{cases}
			\Lambda_1^{2\alpha}u+(u\cdot\nabla)u+\nabla P
			=(b\cdot\nabla)b,\\
			\Lambda_1^{2\beta}b+(u\cdot\nabla)b
			=(b\cdot\nabla)u,\\
			\diver u=\diver b=0.
		\end{cases}
	\end{equation}
	Writing $x'=(x_2,\ldots,x_n)$, any smooth profiles $U(x')$ and
	$B(x')$ give a background solution
	\begin{equation}\label{eq:shear}
		u_0=U(x')e_1,\qquad b_0=B(x')e_1
	\end{equation}
	with constant pressure, where $e_1$ denotes the unit vector in $x_1$ direction. A nonzero such field has no global
	$L^{r,\infty}$ integrability for finite $r$.
	
	Set $u=u_0+v$ and $b=b_0+w$. The perturbation equations are
	\begin{equation}\label{eq:perturbation}
		\begin{cases}
			&\Lambda_1^{2\alpha}v+(u_0\cdot\nabla)v
			+(v\cdot\nabla)u_0+(v\cdot\nabla)v+\nabla P
			=(b_0\cdot\nabla)w+(w\cdot\nabla)b_0+(w\cdot\nabla)w,\\
			&\Lambda_1^{2\beta}w+(u_0\cdot\nabla)w
			+(v\cdot\nabla)b_0+(v\cdot\nabla)w
			=(b_0\cdot\nabla)v+(w\cdot\nabla)u_0+(w\cdot\nabla)v,\\
			&\diver v=\diver w=0.
		\end{cases}
	\end{equation}
	We have the following uniqueness result for compactly-supported perturbations.
	\begin{theorem}\label{thm:stability}
		Let $0<\alpha,\beta<1$, $U,B\in C^\infty(\R^{n-1})$ and let
		$(v,w,P)$ be a smooth solution of \eqref{eq:perturbation}, with
		$v,w\in C_c^\infty(\R^n)$.
		Then $v=w=0$ and $\nabla P=0$.
	\end{theorem}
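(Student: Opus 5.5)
The plan is to exploit the nonlocality of $\Lambda_1^{2\gamma}$ in the $x_1$-direction rather than an energy estimate. Testing \eqref{eq:perturbation} with $(v,w)$ does not close. The transport and coupling terms cancel because $\diver u_0=\diver b_0=0$. The background-gradient terms remain, e.g.
\[
\int v_1\,(v'\cdot\nabla' U)\,\dd x,
\]
and these have no sign for arbitrary smooth $U,B$. Instead, I will look at the equations at points outside the (compact) support of $(v,w)$. Let $R$ be such that $\supp v\cup\supp w\subset\{|x|<R\}$. At any point $x$ with $|x|\geq R$ every local term in \eqref{eq:perturbation} vanishes: $(u_0\cdot\nabla)v$, $(v\cdot\nabla)u_0$, the quadratic terms, and so on. What remains is
\[
\Lambda_1^{2\alpha}v+\nabla P=0,\qquad \Lambda_1^{2\beta}w=0\qquad\text{for }|x|\ge R .
\]

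\emph{Step 1: $w=0$.} Fix $x'\in\R^{n-1}$ and put $g(s)=w(s,x')$, which is supported in $[-R,R]$. For $x_1>R$ the point $(x_1,x')$ lies outside the support, and \eqref{eq:def-Lambdaj} has no singularity there. It reduces to
\[
0=\Lambda_1^{2\beta}w(x_1,x')=-c_\beta\int_{-R}^{R}\frac{g(s)}{(x_1-s)^{1+2\beta}}\dd s .
\]
For $x_1>2R$ I will expand $(x_1-s)^{-1-2\beta}=x_1^{-1-2\beta}\sum_{k\ge0}\binom{-1-2\beta}{k}(-s/x_1)^k$, which converges uniformly for $|s|\le R$. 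Every coefficient $\binom{-1-2\beta}{k}$ is nonzero because $2\beta$ is not an integer; this is exactly where $\beta<1$ is used. Uniqueness of coefficients of a convergent power series in $1/x_1$ then forces
\[
\int s^k g(s)\,\dd s=0\qquad\text{for all }k\geq 0 .
\]
Since $g$ is compactly supported, $\hat g$ is entire and all its derivatives vanish at $0$. Hence $\hat g\equiv0$, so $g=0$. As $x'$ was arbitrary, $w\equiv0$.

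\emph{Step 2: $v=0$.} The pressure is the obstacle here, since no decay of $P$ is assumed. I will eliminate it with the curl. For $k\ne l$ set $\omega_{kl}=\partial_k v_l-\partial_l v_k\in C_c^\infty$. Since $\Lambda_1^{2\alpha}$ commutes with derivatives, differentiating the exterior identity gives
\[
\Lambda_1^{2\alpha}\omega_{kl}=0\qquad\text{for }|x|>R .
\]
Applying the moment argument of Step 1 with $\alpha<1$ yields $\omega_{kl}\equiv0$. Combined with $\diver v=0$, this gives $\Delta v_l=\sum_k\partial_k(\partial_k v_l-\partial_l v_k)=0$. So each $v_l$ is harmonic and compactly supported, hence $v\equiv0$. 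Finally, with $v=w=0$ the first equation of \eqref{eq:perturbation} reduces to $\nabla P=0$.

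The main point requiring care is Step 1. One must justify that for $x$ outside the support the principal-value definition (or the duality definition) of $\Lambda_1^{2\beta}w$ coincides with the absolutely convergent integral above. One must also check the binomial coefficients, since the argument genuinely fails at $\beta=1$: a local operator gives no information off the support. This matches the restriction $0<\alpha,\beta<1$ in Theorem~\ref{thm:stability}.
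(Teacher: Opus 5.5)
Your proposal is correct and follows the paper's proof essentially step for step: the paper also kills $w$ slice-wise via the moment/power-series argument (its Lemma~\ref{lem:compact-support}, proved in the appendix exactly as you do), then applies the same lemma to the vorticity $\omega_{k\ell}$ to remove the pressure, and concludes from harmonicity plus compact support. One small correction: the coefficients $\binom{-1-2\beta}{k}(-1)^k=(1+2\beta)_k/k!$ are positive for every $\beta>0$, including $\beta=1$, so $\beta<1$ is not used there; it enters only, as you note at the end, through the exterior integral representation with $c_\beta\neq0$, i.e.\ the nonlocality of $\Lambda_1^{2\beta}$.
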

	
	\begin{remark}
		Theorem~\ref{thm:stability} is a stationary uniqueness result
		for compactly supported perturbations of \emph{arbitrary}
		smooth transverse backgrounds. The proof relies on the
		nonlocal unique-continuation property of the fractional
		Laplacian (Lemma~\ref{lem:compact-support} below), which
		holds only for $0<\gamma<1$ and fails at $\gamma=1$ where
		$-\partial_1^2$ is a local operator. 
	\end{remark}
	
	\bigskip
	
	We now consider not necessarily compactly-supported perturbations of \eqref{eq:shear}. Before stating our result, we introduce the one-dimensional
	directional concentration function associated with $f$. For
	$0\ne f\in L^2(\mathbb R^n)$ and $\ell>0$, set
	$$
	\mathcal C_{f}(\ell)
	:=
	\sup_{a\in\mathbb R}
	\frac{
		\displaystyle
		\int_{\{a-\ell/2<x_1<a+\ell/2\}}\vert{}f(x)\vert{}^2\,\dd x
	}{
		\|f\|_{L^2(\mathbb R^n)}^2
	}.
	$$For $f=0$, we define $\mathcal{C}_0(\ell)=1$. This measures the largest fraction of the $L^2$-mass of $f$
	contained in a slab of width $\ell$ orthogonal to the $x_1$-axis.
	Our result reads as follows.
	
	\begin{theorem}
		\label{thm:intro-noncompact}
		Let $0<\alpha,\beta\le1$, $U,B\in W^{1,\infty}(\R^{n-1})$ and let
		$(v,w,P)$ be a smooth solution of the perturbation system
		\eqref{eq:perturbation} satisfying
		$$
		v\in L^2(\mathbb R^n)\cap L^{p,\infty}(\mathbb R^n),\quad
		w\in L^2(\mathbb R^n)\cap L^{q,\infty}(\mathbb R^n),
		$$
		where the  exponents satisfy
	\begin{equation}\label{eq:noncompact-exponents}
				3< p<\infty,\qquad
				\frac1p+\frac2q< 1.
			\end{equation}
		Suppose that, for some $\ell_v,\ell_w>0$ and
		$\kappa_v,\kappa_w\in(0,1)$,
		\begin{equation}\label{eq:concentration}
			\mathcal C_v(\ell_v)\ge\kappa_v,
			\qquad
			\mathcal C_w(\ell_w)\ge\kappa_w.
		\end{equation}
		Set
		$$
		a=C_{\alpha,\kappa_v}\ell_v^{2\alpha},
		\qquad
		d=C_{\beta,\kappa_w}\ell_w^{2\beta},\quad
		M_U=\|\nabla U\|_{L^\infty(\R^{n-1})},\quad
		M_B=\|\nabla B\|_{L^\infty(\R^{n-1})},
		$$
		where
		$$
		C_{s,\kappa}
		=
		\frac4\kappa
		\left(\frac4{\pi\kappa}\right)^{2s}.
		$$
		If
		\begin{equation}\label{eq:noncompact}
			M_Ua<2,\qquad M_Ud<2,\qquad
			M_B^2ad<(2-M_Ua)(2-M_Ud),
		\end{equation}
		then $v=w=\nabla P=0$.
	\end{theorem}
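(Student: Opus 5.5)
We test the perturbation equations \eqref{eq:perturbation} against $v$ and $w$ respectively, using expanding cutoffs $\eta_R$ as in the proof of Theorem~\ref{thm:main}, to derive the global energy identity
\[
\|\Lambda_1^{\alpha}v\|_2^2+\|\Lambda_1^{\beta}w\|_2^2
=-\int (v\cdot\nabla)u_0\cdot v\,\dd x+\int (w\cdot\nabla)b_0\cdot v\,\dd x
-\int (v\cdot\nabla)b_0\cdot w\,\dd x+\int (w\cdot\nabla)u_0\cdot w\,\dd x .
\]
The transport terms $(u_0\cdot\nabla)v\cdot v$, $(v\cdot\nabla)v\cdot v$, $(u_0\cdot\nabla)w\cdot w$, $(v\cdot\nabla)w\cdot w$ and the pressure term vanish in the limit. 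The cross terms $(b_0\cdot\nabla)w\cdot v+(b_0\cdot\nabla)v\cdot w$ and $(w\cdot\nabla)w\cdot v+(w\cdot\nabla)v\cdot w$ cancel pairwise by the divergence-free conditions.

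The cutoff errors involve cubic fluxes $|v|^3$, $|v||w|^2$ and the pressure flux. The pressure is given by the Riesz representation $P=R_kR_l(v_kv_l-w_kw_l)+$ linear background terms. These fluxes are controlled by the weak-Lorentz norms with exponents satisfying \eqref{eq:noncompact-exponents}. Since $v,w\in L^2$, the quadratic background terms are handled directly: $u_0,b_0$ are only Lipschitz, so $|u_0(x')|\lesssim 1+|x'|$, and the term $u_0|v|^2\nabla\eta_R$ is bounded by $\|\nabla U\|_\infty\int |v|^2$ on the annulus, which tends to zero.

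Next we use the shear structure. Since $u_0=U(x')e_1$, we have $(v\cdot\nabla)u_0=(v'\cdot\nabla'U)e_1$, so
\[
\Big|\int (v\cdot\nabla)u_0\cdot v\,\dd x\Big|\le M_U\int |v'||v_1|\le \tfrac{M_U}{2}\|v\|_2^2 .
\]
The other three terms are bounded similarly, giving
\[
\|\Lambda_1^\alpha v\|_2^2+\|\Lambda_1^\beta w\|_2^2\le \tfrac{M_U}{2}(\|v\|_2^2+\|w\|_2^2)+M_B\|v\|_2\|w\|_2 .
\]
A sharper bookkeeping, keeping $|v_1||v'|$-type products, may be needed to match the constants $2-M_Ua$ exactly.

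The key new ingredient is an uncertainty-type lower bound from the concentration hypothesis \eqref{eq:concentration}:
\[
\|\Lambda_1^{s}f\|_2^2\ge \frac{1}{C_{s,\kappa}\ell^{2s}}\|f\|_2^2\qquad\text{whenever }\mathcal C_f(\ell)\ge\kappa .
\]
To prove it, split $\hat f$ at frequency $|\xi_1|=\rho$. The low-frequency part $f_{<}$ satisfies $\int_{\text{slab}}|f_<|^2\le \ell\cdot\sup_{x_1}\int|f_<|^2\dd x'\le \ell\frac{\rho}{\pi}\|f\|_2^2$ by Bernstein in $x_1$. Concentration gives $\kappa\|f\|^2\le 2\int_{\text{slab}}|f_<|^2+2\|f_>\|^2$. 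Choosing $\rho=\pi\kappa/(4\ell)$ forces $\|f_>\|_2^2\ge\frac{\kappa}{4}\|f\|_2^2$, and $\|\Lambda_1^sf\|^2\ge\rho^{2s}\|f_>\|^2$, yielding exactly $C_{s,\kappa}=\frac4\kappa(\frac{4}{\pi\kappa})^{2s}$. This explains the stated constants.

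Combining, with $X=\|v\|_2^2$ and $Y=\|w\|_2^2$, we get
\[
\frac{X}{a}+\frac{Y}{d}\le \frac{M_U}{2}(X+Y)+M_B\sqrt{XY},
\]
that is, $(2-M_Ua)\frac{X}{a}+(2-M_Ud)\frac{Y}{d}\le 2M_B\sqrt{XY}$. Under \eqref{eq:noncompact}, the left quadratic form dominates: by AM--GM the left side is at least $2\sqrt{(2-M_Ua)(2-M_Ud)XY/(ad)}>2M_B\sqrt{XY}$ unless $XY=0$. Hence $X=Y=0$; if only one vanishes, the inequality still forces the other to vanish. Therefore $v=w=0$, and then $\nabla P=0$ from the equation.

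The main obstacle is justifying the global energy identity. The background $u_0$ is unbounded, since it only has bounded gradient, so the transport cutoff term $\int u_0\cdot\nabla\eta_R|v|^2$ is not obviously small. The first thing to try is to use $u_0\cdot\nabla\eta_R=U\partial_1\eta_R$ together with a cutoff of product form $\eta_R(x_1)\chi_R(x')$ of different scales, sending the $x_1$-scale to infinity first so that $|U|/R_1$ is bounded on the support. Alternatively, one can subtract $U(0)$ via Galilean invariance along $x_1$. The pressure's background-linear part $2R_kR_l(u_{0,k}v_l-\dots)$ must be handled in the same way.
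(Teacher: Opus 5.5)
Your route is the paper's: a cutoff energy identity with the Riesz pressure $P=Q+C$, the shear bound $|I(v,w)|\le\frac{M_U}{2}(\|v\|_2^2+\|w\|_2^2)+M_B\|v\|_2\|w\|_2$, the frequency-splitting directional Poincar\'e inequality of Lemma~\ref{lem:directional-concentration} (same cutoff $N=\pi\kappa/(4\ell)$, same constant), and positivity of the resulting quadratic form. Your AM--GM step is equivalent to the paper's. The crude bound on $I$ already reproduces \eqref{eq:noncompact} exactly, so no sharper bookkeeping is needed.

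The ``main obstacle'' in your last paragraph rests on a misreading. $U,B\in W^{1,\infty}(\R^{n-1})$ means $U,B\in L^\infty$ as well. The background cutoff terms are therefore $O(R^{-1})$ times $\|U\|_\infty\|v\|_2^2$ and similar quantities. More to the point, $F_{\mathrm{lin}}=u_0\otimes v+v\otimes u_0-b_0\otimes w-w\otimes b_0\in L^2$, so $Q_{\mathrm{lin}}\in L^2$ and its flux is at most $CR^{-1}\|Q_{\mathrm{lin}}\|_2\|v\|_2$. For a genuinely growing $U$ it is this linear pressure, not $\int|v|^2u_0\cdot\nabla\eta_R^2$, that would fail. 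No anisotropic cutoffs or Galilean shifts are needed.

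The cubic fluxes are also not handled by the weak-norm scaling of Theorem~\ref{thm:main}, which would need $p<3n/(n-1)$. They vanish because of the factor $R^{-1}$ together with strong global norms obtained by interpolating with $L^2$:
\begin{itemize}
\item $v\in L^3$ since $p>3$;
\item choosing $2p/(p-1)<s<\min\{4,q\}$ with $1/r+2/s=1$ gives $v\in L^r$ and $w\in L^s$;
\item hence $Q_{vv}\in L^{3/2}$ and $Q_{ww}\in L^{s/2}$.
\end{itemize}
This is exactly where $\frac1p+\frac2q<1$ enters.

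The step you genuinely omit is finiteness of the dissipation. $\Lambda_1^\alpha v,\Lambda_1^\beta w\in L^2$ is not a hypothesis, so the global identity cannot just be written down after letting $R\to\infty$. The paper first combines the cutoff identity with Lemma~\ref{lem:localized} (Lemma~\ref{lem:classical-localized} at exponent $1$) and the vanishing cutoff errors to get
\[
\|\Lambda_1^\alpha(\eta_Rv)\|_2^2+\|\Lambda_1^\beta(\eta_Rw)\|_2^2
\le\tfrac{M_U}{2}\bigl(\|v\|_2^2+\|w\|_2^2\bigr)+M_B\|v\|_2\|w\|_2+o(1).
\]
It then uses $\eta_Rv\to v$ in $L^2$ and weak compactness. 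Weak lower semicontinuity in this display already gives the inequality you actually use. So the identity itself, which the paper proves via the commutator estimate of Lemma~\ref{lem:directional-energy-cutoff}, is not needed for the conclusion.

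Finally, applying your uncertainty inequality requires a slab that actually carries mass $\ge\kappa\|f\|_2^2$. The supremum defining $\mathcal C_f(\ell)$ is attained (Remark~\ref{rem:concentration-attainment}); alternatively, use $\kappa-\varepsilon$ and let $\varepsilon\to0$.
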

	\begin{remark}
		$(M_U+M_B)\max\{a,d\}<2$
		is a convenient sufficient condition for \eqref{eq:noncompact}.
	\end{remark}
	
	
	\begin{remark}\label{rem:intro-localized-examples}
		Many functions that are not compactly supported satisfy the directional
		concentration hypothesis \eqref{eq:concentration}. For example, let
		$g\in L^2(\mathbb R^{n-1})$, $g\not\equiv 0$, and define
		$$
		f_1(x_1,x')=e^{-\mu\vert{}x_1-a\vert{}}g(x'),\qquad
		f_2(x_1,x')=e^{-\lambda(x_1-a)^2}g(x'),
		$$
		where $\mu,\lambda>0$. For each $0<\kappa<1$, each profile satisfies
		$\mathcal C_{f_i}(\ell)\ge\kappa$ for all sufficiently large $\ell$.
		
		The algebraically localized smooth profile
		$$
		f_3(x_1,x')=(1+(x_1-a)^2)^{-m/2}g(x')
		$$
		belongs to $L^2(\mathbb R^n)$ when $m>1/2$. It has finite directional
		second moment when $m>3/2$. Even when $1/2<m\le3/2$, its $L^2$ mass
		can still be captured by a sufficiently long interval for every fixed
		$\kappa<1$.
		
		If, in addition, $g\in L^2(\mathbb R^{n-1})\cap L^r(\mathbb R^{n-1})$
		for some $1\le r<\infty$, then $f_1,f_2\in L^2(\mathbb R^n)\cap
		L^r(\mathbb R^n)$, and $f_3\in L^2(\mathbb R^n)\cap L^r(\mathbb R^n)$
		provided $m>1/2$ and $m>1/r$. For $s>0$,
		$\Lambda_1^s f_1,\Lambda_1^s f_2,\Lambda_1^s f_3\in L^2(\mathbb R^n)$
		provided, respectively, $s<3/2$, any $s>0$, and $m>1/2$.
	\end{remark}
	
	\begin{remark}\label{rem:concentration-attainment}
		For nonzero $f\in L^2$, $\mathcal C_f(\ell)\ge\kappa$ is
		equivalent to the existence of an interval
		$I$ of length $\ell$ satisfying 
		$$
		\int_{\{x_1\in I\}}\vert{}f(x)\vert{}^2\,\dd x
		\ge\kappa\|f\|_2^2.
		$$
		Indeed, by Tonelli's theorem, the function
		$$
		m_f(t):=\int_{\R^{n-1}} \vert{}f(t,x')\vert{}^2\dd x'
		$$
		is nonnegative and belongs to $L^1(\R)$, with
		$$
		\int_{\R}m_f(t)\dd t=\|f\|_{L^2(\R^n)}^2>0.
		$$
		For $a\in\R$, put
		$$
		I_a=[a-\ell/2,a+\ell/2],
		\qquad
		F_\ell(a):=\int_{I_a}m_f(t)\dd t.
		$$
		Then $F_\ell$ is continuous. Moreover,
		$$
		0\le F_\ell(a)
		\le \int_{\{\vert{}t\vert{}\ge \vert{}a\vert{}-\ell/2\}}m_f(t)\dd t
		\longrightarrow 0
		\qquad\text{as }\vert{}a\vert{}\to\infty,
		$$
		because $m_f\in L^1(\R)$.
%
		By continuity, $F_\ell$ attains its maximum on some compact
		interval $[-A,A]$. This maximum is also its global maximum,
		so there exists $a_*\in\R$ satisfying $	F_\ell(a_*)=\sup_{a\in\R}F_\ell(a)$.
		If $\mathcal C_f(\ell)\ge\kappa$, the maximizing interval
		$I_{a_*}$ satisfies
		$$
		\int_{\{x_1\in I_{a_*}\}}\vert{}f(x)\vert{}^2\dd x
		=F_\ell(a_*)
		\ge\kappa\|f\|_{L^2(\R^n)}^2.
		$$
		Conversely, any interval of length $\ell$ is, up to its
		endpoints, an interval $I_a$. Endpoints do not affect these
		integrals. Hence the existence of such an interval implies
		$\mathcal C_f(\ell)\ge\kappa$.
	\end{remark}
	
	\begin{remark}
		If we assume the strong-space condition $v\in L^2(\R^n)\cap L^p(\R^n)$ and $w\in L^2(\R^n)\cap L^q(\R^n)$, then Theorem \ref{thm:intro-noncompact} holds for $3\le p<\infty$, $\frac1p+\frac2q\le 1$. The reasons are similar to those stated in Remark \ref{rem:endpoint}. Indeed, define $q^*=2p/(p-1)$, since $1/p+2/q\le 1$, we have $q\ge q^*>2$. Therefore, $w\in L^2\cap L^q$ gives $w\in L^{q^*}$. Also, $v\in L^2\cap L^p$ for $p\ge 3$ gives $v\in L^3$. Thus, all the estimates \eqref{eq:u-weak}--\eqref{eq:Pw-weak} remain valid after replacing $(r,s)$ by $(p,q^*)$.
	\end{remark}
	
	The concentration assumption \eqref{eq:concentration} can be replaced by a directional moment condition.
	\begin{corollary}
		\label{cor:directional-moment}
		Let $0<\alpha,\beta\le1$, $U,B\in W^{1,\infty}(\mathbb R^{n-1})$ and let
		$(v,w,P)$ be a smooth solution of the perturbation system
		\eqref{eq:perturbation} satisfying
		$$
		v\in L^2(\mathbb R^n)\cap L^{p,\infty}(\mathbb R^n),\quad
		w\in L^2(\mathbb R^n)\cap L^{q,\infty}(\mathbb R^n),
		$$
		with $p> 3$ and $1/p+2/q< 1$.
		Suppose there exist $a_v,a_w\in\R$ and
		$L_v,L_w>0$ such that
		$$
		\|(x_1-a_v)v\|_2\le L_v\|v\|_2,
		\qquad
		\|(x_1-a_w)w\|_2\le L_w\|w\|_2.
		$$
		If the smallness condition in Theorem~\ref{thm:intro-noncompact} holds
		with
		$$
		\kappa_v=\kappa_w=\frac34,
		\qquad
		\ell_v=4L_v,\qquad \ell_w=4L_w,
		$$
		then $v=w=0$ and $\nabla P=0$.
	\end{corollary}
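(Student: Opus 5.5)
The plan is to reduce the corollary to Theorem~\ref{thm:intro-noncompact}. It suffices to show that the moment hypotheses imply the concentration hypotheses \eqref{eq:concentration} with $\kappa_v=\kappa_w=3/4$, $\ell_v=4L_v$ and $\ell_w=4L_w$. All other hypotheses of the theorem coincide with those assumed here: the regularity of $U,B$, the integrability $v\in L^2\cap L^{p,\infty}$ and $w\in L^2\cap L^{q,\infty}$, the exponent conditions \eqref{eq:noncompact-exponents}, and the smallness conditions \eqref{eq:noncompact} for the resulting constants $a,d$. Once concentration is verified, the theorem yields $v=w=\nabla P=0$ directly.

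If $v=0$, then $\mathcal C_0(\ell)=1\ge 3/4$ by the stated convention, so we may assume $v\neq0$. The main step is a Chebyshev-type estimate in the $x_1$ variable. Take the slab centered at $a_v$,
\[
S=\{x: |x_1-a_v|<\ell_v/2\}=\{x: |x_1-a_v|<2L_v\}.
\]
On the complement of $S$ we have $|x_1-a_v|\ge 2L_v$, and therefore
\[
\int_{\R^n\setminus S}|v|^2\,\dd x\le \frac{1}{4L_v^2}\int_{\R^n}|x_1-a_v|^2|v|^2\,\dd x=\frac{\|(x_1-a_v)v\|_2^2}{4L_v^2}\le \frac14\|v\|_2^2 .
\]
It follows that $\int_S|v|^2\,\dd x\ge \frac34\|v\|_2^2$.

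Since $S$ is a slab of width $\ell_v$ centered at $a=a_v$, taking the supremum over $a$ in the definition of $\mathcal C_v$ gives $\mathcal C_v(4L_v)\ge 3/4$. Applying the same argument to $w$ with $a_w,L_w$ gives $\mathcal C_w(4L_w)\ge 3/4$.

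There is no real obstacle here. The only points needing care are that the slab in the definition of $\mathcal C_f$ is open while the excluded region is closed, which is harmless because the inequality on the complement holds for $|x_1-a_v|\ge 2L_v$, and that $\kappa=3/4$ lies in $(0,1)$ as the theorem requires. With these checks in place, Theorem~\ref{thm:intro-noncompact} applies with the stated parameters and gives the conclusion.
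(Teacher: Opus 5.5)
Your proposal is correct and follows the paper's proof: Chebyshev's inequality in $x_1$ shows that at least $3/4$ of the $L^2$-mass of $v$ (resp.\ $w$) lies in the slab of width $4L_v$ (resp.\ $4L_w$) centered at $a_v$ (resp.\ $a_w$). This gives the concentration hypothesis \eqref{eq:concentration}, so Theorem~\ref{thm:intro-noncompact} applies directly. Your additional checks on the zero case and on open versus closed slabs are harmless refinements of the same argument.
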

	
	\begin{proof}
		Chebyshev's inequality gives
		$$
		\int_{\{\vert{}x_1-a_v\vert{}>2L_v\}}\vert{}v\vert{}^2\,\dd x
		\le\frac{\|(x_1-a_v)v\|_2^2}{4L_v^2}
		\le\frac14\|v\|_2^2.
		$$
		Hence at least three quarters of the mass lies in
		$[a_v-2L_v,a_v+2L_v]$, whose length is $4L_v$.
		The same argument applies to $w$. The proof then follows by applying Theorem~\ref{thm:intro-noncompact}.
	\end{proof}
	
	\section{Preliminaries}
	
	\subsection{Lorentz spaces and inequalities}
	
For a measurable function $f$, define
$$
\mu_f(t)=\left|\{x\in\mathbb{R}^n:|f(x)|>t\}\right|,
$$
where $|\cdot|$ denotes the Lebesgue measure.
For $1\le p<\infty$ and $1\le q\le\infty$, a function $f$ is said to belong to the Lorentz space $L^{p,q}(\mathbb{R}^n)$ if
$\|f\|_{L^{p,q}(\mathbb{R}^n)}<\infty$, where the Lorentz quasi-norm is defined by
$$
\|f\|_{L^{p,q}}=
\begin{cases}
	\displaystyle
	\left(\int_0^\infty
	t^{q-1}\mu_f(t)^{q/p}\,\mathrm{d}t\right)^{1/q},
	&q<\infty,\\[1ex]
	\displaystyle\sup_{t>0}t\,\mu_f(t)^{1/p},
	&q=\infty.
\end{cases}
$$
	In particular, $L^{p,p}=L^p$ with equivalent norms.
	
	We use the following standard forms of H\"older's inequality  \cite{ON}
	and boundedness of Riesz transform in Lorentz spaces \cite{Stein,CF}.
	
	\begin{lemma}\label{lem:holder}
		Let $1<p_k<\infty$, $1\le q_k\le\infty$, and
		$1\le p<\infty$, $1\le q\le\infty$. If
		$$
		\frac1p=\sum_{k=1}^m\frac1{p_k},
		\qquad
		\frac1q=\sum_{k=1}^m\frac1{q_k},
		$$
		then
		$$
		\left\|\prod_{k=1}^m f_k\right\|_{L^{p,q}}
		\le C\prod_{k=1}^m\|f_k\|_{L^{p_k,q_k}}.
		$$
		In particular, if $1<a<\infty$, then
		$$
		\int_{\R^n}\vert{}fg\vert{}\,\dd x
		\le C_a\|f\|_{L^{a,\infty}}\|g\|_{L^{a',1}}.
		$$
	\end{lemma}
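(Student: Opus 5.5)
This lemma is the standard H\"older inequality in Lorentz spaces (O'Neil). I would prove it by passing to decreasing rearrangements. Recall that $\|f\|_{L^{p,q}}$ is equivalent to $\bigl(\int_0^\infty (t^{1/p}f^*(t))^q\,\frac{\dd t}{t}\bigr)^{1/q}$, with the usual modification when $q=\infty$, where $f^*$ is the decreasing rearrangement. This follows from the distribution-function definition by the layer-cake change of variables, up to constants depending on $p,q$.

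\textbf{Product inequality.} The key pointwise fact is
\[
\Bigl(\prod_k f_k\Bigr)^*(t)\le\prod_k f_k^*(t/m).
\]
It holds because $\{|\prod f_k|>\prod s_k\}\subset\bigcup_k\{|f_k|>s_k\}$. Choosing $s_k=f_k^*(t/m)$ makes each set have measure at most $t/m$, so the union has measure at most $t$.

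Substituting this bound and changing variables $t\mapsto mt$ gives
\[
\|\textstyle\prod f_k\|_{L^{p,q}}\lesssim \Bigl\|\prod_k t^{1/p_k}f_k^*(t)\Bigr\|_{L^q(\dd t/t)},
\]
since $t^{1/p}=\prod_k t^{1/p_k}$. Ordinary H\"older in $L^q(\dd t/t)$ with exponents $q_k$ then yields $\prod_k\|t^{1/p_k}f_k^*\|_{L^{q_k}(\dd t/t)}\simeq\prod_k\|f_k\|_{L^{p_k,q_k}}$. The restriction $p_k>1$ keeps all the quasi-norm equivalences uniform, and the cases $q_k=\infty$ are absorbed by sup bounds inside H\"older.

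\textbf{Duality consequence.} For the second claim, I use the Hardy--Littlewood inequality $\int|fg|\le\int_0^\infty f^*g^*\,\dd t$. Then
\[
\int_0^\infty f^*g^*\,\dd t\le\sup_t t^{1/a}f^*(t)\int_0^\infty t^{1/a'}g^*(t)\frac{\dd t}{t}\simeq\|f\|_{L^{a,\infty}}\|g\|_{L^{a',1}},
\]
using $1/a+1/a'=1$. Equivalently, this is the case $m=2$, $p=1$ of the product bound, provided one handles the endpoint $p=1$ directly as here.

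\textbf{Main obstacle.} The only delicate point is constant bookkeeping in the equivalence between the distribution-function definition and the rearrangement form, since $L^{p,\infty}$ is only quasi-normed. This is routine, so I would cite \cite{ON} for it.
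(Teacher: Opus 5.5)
Your proof is correct. It is the standard rearrangement proof of O'Neil's H\"older inequality. The paper gives no argument of its own and simply cites \cite{ON}, so your sketch agrees with the paper's treatment and is more self-contained.

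Two minor points. The restriction $p_k>1$ and the separate handling of $p=1$ are not needed in your argument. The paper's quasi-norm equals exactly $p^{-1/q}$ times the rearrangement form, so the quasi-normed nature causes no real bookkeeping difficulty. And your product bound with $m=2$ and $p=q=1$ already gives the duality estimate.
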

	
	
	
	\begin{lemma}\label{lem:riesz}
	Let $\mathcal R_m$ be the $m$-th Riesz transform, defined by
	$\widehat{\mathcal R_m f}(\xi)
	=-i\xi_m\widehat f(\xi)/|\xi|$
	for $\xi\neq0$. For $1<p<\infty$ and $1\le q\le\infty$,
	there exists $C_{p,q}>0$ such that
	\[
	\|\mathcal R_m f\|_{L^{p,q}}
	\le C_{p,q}\|f\|_{L^{p,q}}
	\qquad\text{for all }f\in L^{p,q}(\mathbb R^n).
	\]
	\end{lemma}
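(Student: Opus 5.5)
The statement is Lemma \ref{lem:riesz}: Riesz transforms are bounded on the Lorentz spaces $L^{p,q}$ for $1<p<\infty$. I would prove it by real interpolation from the strong-type Lebesgue bounds.

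\textbf{Step 1: strong-type Lebesgue bounds.} First, $\mathcal R_m$ is a Calder\'on--Zygmund operator. Its kernel is $c_n\,\PV\, x_m/|x|^{n+1}$, which is homogeneous of degree $-n$, odd and smooth on the sphere, and its multiplier $-i\xi_m/|\xi|$ is bounded. By Plancherel, $\mathcal R_m$ is bounded on $L^2$. The H\"ormander condition
\[
\int_{|x|>2|y|}|K(x-y)-K(x)|\,\dd x\le C
\]
follows from $|\nabla K(x)|\lesssim|x|^{-n-1}$. The Calder\'on--Zygmund decomposition then gives weak type $(1,1)$. Marcinkiewicz interpolation yields boundedness on $L^r$ for $1<r\le2$. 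Since the adjoint of $\mathcal R_m$ is $-\mathcal R_m$, duality gives boundedness on $L^r$ for $2\le r<\infty$.

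\textbf{Step 2: real interpolation.} Given $1<p<\infty$, choose $1<p_0<p<p_1<\infty$ and $\theta\in(0,1)$ with $1/p=(1-\theta)/p_0+\theta/p_1$. The Lorentz spaces are real interpolation spaces:
\[
(L^{p_0},L^{p_1})_{\theta,q}=L^{p,q}\quad\text{with equivalent quasi-norms, for all }1\le q\le\infty.
\]
This identification follows from Holmstedt's formula for the $K$-functional,
\[
K(t,f;L^{p_0},L^{p_1})\approx\Big(\int_0^{t^\sigma}(s^{1/p_0}f^*(s))^{p_0}\tfrac{\dd s}{s}\Big)^{1/p_0}+t\Big(\int_{t^\sigma}^\infty(s^{1/p_1}f^*(s))^{p_1}\tfrac{\dd s}{s}\Big)^{1/p_1},
\]
where $1/\sigma=1/p_0-1/p_1$, together with Hardy's inequalities. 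The $\mu_f$-based quasi-norm in the paper coincides with the usual one defined via $f^*$, since $\|f\|_{L^{p,q}}^q\approx\int_0^\infty(s^{1/p}f^*(s))^q\,\dd s/s$. This follows from a layer-cake change of variables between the distribution function and the decreasing rearrangement.

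The real method is exact of exponent $\theta$. Since $\mathcal R_m$ is linear and bounded on both $L^{p_0}$ and $L^{p_1}$ by Step 1, it is bounded on $L^{p,q}$ with constant at most $C\|\mathcal R_m\|_{p_0\to p_0}^{1-\theta}\|\mathcal R_m\|_{p_1\to p_1}^{\theta}$.

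\textbf{Step 3: meaning of $\mathcal R_m$ on $L^{p,q}$.} The Fourier-multiplier definition applies directly only to nice $f$. For $q<\infty$, simple functions are dense in $L^{p,q}$, so the bound extends by density. For $q=\infty$, density fails, so I would instead define $\mathcal R_m$ on $L^{p_0}+L^{p_1}\supset L^{p,\infty}$ by splitting $f=f\mathbf 1_{|f|>\lambda}+f\mathbf 1_{|f|\le\lambda}$. This is consistent with the distributional definition, and the interpolation argument above already works on the sum space. In that form the bound holds directly for every $f\in L^{p,\infty}$.

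\textbf{Main obstacle.} The only genuinely technical point is the identification of the real interpolation space with $L^{p,q}$, i.e. Holmstedt's formula and the Hardy inequalities, when $q=\infty$ or $q<p_0$. Since this is classical (Stein--Weiss, Bergh--L\"ofstr\"om), I would simply cite it, as the paper does via \cite{Stein,CF}.
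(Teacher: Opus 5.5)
Your proposal is correct. Calder\'on--Zygmund theory gives the $L^r$ bounds for $1<r<\infty$, and the real-interpolation identification $(L^{p_0},L^{p_1})_{\theta,q}=L^{p,q}$ (via Holmstedt's formula, plus the equivalence of the paper's $\mu_f$-based quasi-norm with the $f^*$-based one) transfers them to all $1\le q\le\infty$, with the sum-space definition handling $q=\infty$; the paper gives no proof of its own and only cites \cite{Stein,CF}, and your argument is exactly the standard one behind those references.
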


	\subsection{One-dimensional fractional Poincar\'e inequality}
	
	The following is a standard fractional Poincar\'e
	inequality; see, for example, \cite[Theorem 6.5]{DNPV}. 
	We record an explicit admissible constant for the
	quantitative criteria below.
	
	\begin{lemma}\label{lem:stab-poincare}
		Let $0<s<1$. If $f\in C_c^\infty(\R)$ is supported
		in an interval of length $\ell>0$, then
		\begin{equation}\label{eq:stab-poincare-1d}
			\|f\|_2^2
			\le C_s\ell^{2s}\|\Lambda_t^s f\|_2^2,
			\qquad C_s=\frac{3^{1+2s}}{c_s}.
		\end{equation}
		The same inequality holds for vector-valued $f$.
	\end{lemma}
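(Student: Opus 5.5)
We need to prove that for $f\in C_c^\infty(\R)$ supported in an interval $I$ of length $\ell$ we have $\|f\|_2^2\le C_s\ell^{2s}\|\Lambda_t^s f\|_2^2$ with $C_s=3^{1+2s}/c_s$. The plan is to use the Gagliardo form of the energy, keep only the interactions between points of $I$ and points at distance comparable to $\ell$ outside $I$, and then compute the kernel integral explicitly.

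\textbf{Step 1: Gagliardo identity.} By translation invariance we may take $I=[0,\ell]$. Combining the singular-integral representation \eqref{eq:def-Lambdaj} in one variable with Plancherel gives the standard identity
\[
\|\Lambda_t^s f\|_2^2=\frac{c_s}{2}\iint_{\R\times\R}\frac{|f(t)-f(\tau)|^2}{|t-\tau|^{1+2s}}\,\dd t\,\dd\tau .
\]
To get it, pair $\Lambda_t^{2s}f$ with $f$ and symmetrize in $(t,\tau)$. The principal value causes no difficulty for $f\in C_c^\infty$, because the symmetric second difference is $O(|h|^2)$ near $h=0$.

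\textbf{Step 2: restrict the double integral.} Drop all of the double integral except the two symmetric regions where $t\in I$ and $\tau\in J:=[\ell,3\ell]\cup[-2\ell,0]$. Since $f(\tau)=0$ on $J$ (up to endpoints), each region contributes $\int_I|f(t)|^2\int_J|t-\tau|^{-1-2s}\,\dd\tau\,\dd t$, and the factor $2$ from the two regions cancels the $\tfrac12$. Hence
\[
\|\Lambda_t^s f\|_2^2\ge c_s\int_I|f(t)|^2\int_J\frac{\dd\tau}{|t-\tau|^{1+2s}}\,\dd t .
\]

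\textbf{Step 3: bound the kernel from below.} For $t\in I$ and $\tau\in J$ we have $|t-\tau|\le 3\ell$. Since $|J|=3\ell$, this gives
\[
\int_J|t-\tau|^{-1-2s}\,\dd\tau\ge 3\ell\,(3\ell)^{-1-2s}=3^{-2s}\ell^{-2s}.
\]
Therefore $\|\Lambda_t^sf\|_2^2\ge c_s3^{-2s}\ell^{-2s}\|f\|_2^2$. This is already stronger than the claim, since $3^{2s}\le 3^{1+2s}$.

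One could instead keep all of $\R\setminus I$, where the integral over $\{|t-\tau|\ge\mathrm{dist}\}$ blows up near the endpoints. The cruder choice of $J$ above avoids this and still yields an admissible constant. The vector-valued case follows by applying the scalar inequality to each component and summing.

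\textbf{Main obstacle.} The only delicate point is Step 1: justifying the Gagliardo identity with the exact normalization $c_s/2$, matching the constant in \eqref{eq:def-Lambdaj}. Rather than verify the constant independently, I would derive the identity by direct pairing and symmetrization, which makes the normalization consistent with the paper's definition by construction.
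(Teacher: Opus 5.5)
Your proof is correct and follows the paper's argument: you write $\|\Lambda_t^s f\|_2^2$ in Gagliardo form, restrict to the symmetric regions $I\times J$ and $J\times I$ where $f$ vanishes in one variable, and use $|t-\tau|\le 3\ell$. The paper takes $J=[2\ell,3\ell]$, which gives exactly $3^{-1-2s}\ell^{-2s}$, while your larger $J$ gives a better constant. One small slip: $J=[\ell,3\ell]\cup[-2\ell,0]$ has length $4\ell$, not $3\ell$. This only undercounts, so your bound $\int_J|t-\tau|^{-1-2s}\dd\tau\ge 3^{-2s}\ell^{-2s}$ still holds, and in fact improves to $4\cdot 3^{-1-2s}\ell^{-2s}$.
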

	
\begin{proof}
	After translating the independent variable, we may assume that $
	\operatorname{supp} f\subset [0,\ell]$. Restrict the integral to the two rectangles $[0,\ell]\times[2\ell,3\ell]$
	and	$[2\ell,3\ell]\times[0,\ell]$.
	Since \(f=0\) on \([2\ell,3\ell]\), this gives
	\[
		\|\Lambda_t^s f\|_2^2
		\ge \frac{c_s}{2}
		\bigg(
		\int_0^\ell\int_{2\ell}^{3\ell}
		\frac{|f(t)|^2}{|t-r|^{1+2s}}
		\,dr\,dt 
		+\int_{2\ell}^{3\ell}\int_0^\ell
		\frac{|f(r)|^2}{|t-r|^{1+2s}}
		\,dr\,dt\bigg)=c_s
		\int_0^\ell\int_{2\ell}^{3\ell}
		\frac{|f(t)|^2}{|t-r|^{1+2s}}
		\,dr\,dt.
		.
	\]
	For \(0\le t\le\ell\) and \(2\ell\le r\le3\ell\), we have $
	|t-r|\le 3\ell$. Therefore,
	\[	\|\Lambda_t^s f\|_2^2
		\ge
		c_s3^{-1-2s}\ell^{-2s}\|f\|_2^2.
	\]
	This gives \eqref{eq:stab-poincare-1d}.
\end{proof}
	
	\subsection{Fractional compact-support property}
	
	\begin{lemma}\label{lem:compact-support}
		Let $0<s<1$. If $f\in C_c^\infty(\R)$ and
		$(-\partial_t^2)^sf\in C_c^\infty(\R)$, then $f=0$.
	\end{lemma}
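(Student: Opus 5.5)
The plan is to exploit the tail behaviour of $(-\partial_t^2)^s f$ outside the support of $f$. Suppose $f\neq 0$ and $\supp f\subset[-R,R]$. For $|t|>R$ the principal value is unnecessary, since $f(t)=0$ there, and the representation \eqref{eq:def-Lambdaj} reduces to
\[
(-\partial_t^2)^s f(t) = -c_s\int_{-R}^{R}\frac{f(r)}{|t-r|^{1+2s}}\,\dd r ,\qquad |t|>R .
\]
Because $g:=(-\partial_t^2)^s f$ is compactly supported, this integral must vanish for all $|t|$ large. The goal is to deduce from this that every moment of $f$ vanishes, which forces $f=0$.

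\textbf{Step 1 (expansion at infinity).} For $t>2R$ and $|r|\le R$, write
\[
|t-r|^{-1-2s}=t^{-1-2s}(1-r/t)^{-1-2s}=t^{-1-2s}\sum_{k\ge0}\binom{-1-2s}{k}(-r/t)^k .
\]
The series converges uniformly for $|r|\le R$. Integrating term by term gives
\[
0 = -g(t)/c_s=\sum_{k\ge0} a_k\, m_k\, t^{-1-2s-k},\qquad m_k=\int r^k f(r)\,\dd r,
\]
with $a_k=(-1)^k\binom{-1-2s}{k}=\frac{(1+2s)(2+2s)\cdots(k+2s)}{k!}$. These coefficients are nonzero because $0<s<1$.

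\textbf{Step 2 (vanishing of moments).} The right-hand side is $t^{-1-2s}$ times a power series in $1/t$ that converges for $|1/t|<1/R$. It vanishes for all sufficiently small $1/t>0$, so by the identity theorem for power series every coefficient vanishes. Since each $a_k\neq 0$, we get $m_k=0$ for all $k\ge0$.

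\textbf{Step 3 (conclusion).} Since $f$ is compactly supported, its Fourier transform is entire, and $\hat f^{(k)}(0)=(-i)^k m_k=0$ for every $k$. Hence $\hat f\equiv0$, and therefore $f=0$. For vector-valued $f$ the argument applies componentwise.

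The main point needing care is Step 1: justifying the kernel formula off the support and the termwise integration. The kernel formula holds because $f\in C_c^\infty$, so the singular integral is classical and the principal value is harmless where $f(t)=0$. The termwise integration is justified by uniform convergence on $[-R,R]$. The nonvanishing of $a_k$ is where $s\in(0,1)$ enters; for $s=1$ one has $\binom{-3}{k}\ne 0$ as well, but the singular-integral representation itself fails there. That failure is the source of the locality obstruction noted in the paper's remark.
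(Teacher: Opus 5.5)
Your proposal is correct and follows the paper's appendix proof essentially step for step. Both arguments use the off-support kernel formula for $(-\partial_t^2)^s f$, expand $(t-r)^{-1-2s}$ in powers of $r/t$ with termwise integration justified by uniform convergence, invoke uniqueness of power-series coefficients (with $(1+2s)_k/k!\neq 0$) to get $m_k=0$ for all $k$, and conclude $f=0$ from the analyticity of $\widehat f$.
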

	
	Lemma~\ref{lem:compact-support} is a special case of the strong
	uniqueness property established in \cite[Theorem~1.2]{GSU}:
	if $0<s<1$, $u\in H^{-r}(\R^n)$ for some $r\in\R$,
	and both $u$ and $(-\Delta)^su$ vanish on a nonempty open
	set, then $u\equiv0$.
	Indeed, under the assumptions of Lemma~\ref{lem:compact-support},
	both $f$ and $\Lambda^{2s}f$ vanish outside a sufficiently
	large interval.
	The proof in \cite{GSU} combines Carleman estimates with the
	Caffarelli--Silvestre extension.
	For independent interest, we provide a direct elementary proof of
	Lemma~\ref{lem:compact-support} in
	Appendix~\ref{compact-support}.

	\subsection{Equivalent definition of Fractional operators}
	
	We use the standard equivalence of the Fourier,
	singular integral, and symmetric-difference definitions
	of the fractional Laplacian; see \cite{Kwa}.
	For $0<\gamma<1$ and $f\in\Sch(\R^n)$,
	the following definitions are equivalent to the singular integral definitions given by \eqref{eq:def-Lambda} and \eqref{eq:def-Lambdaj}.
	\begin{enumerate}[label=\textup{(\arabic*)}]
		\item $\Lambda_j^\gamma f=\mathcal{F}^{-1}[|\xi_j|^\gamma \hat{f}(\xi)]$, \quad $\Lambda^\gamma f=\mathcal{F}^{-1}[|\xi|^\gamma \hat{f}(\xi)]$.
		\item 	\begin{align}
			\Lambda_j^{2\gamma}f(x)
			&=\frac{c_\gamma}{2}\int_\R
			\frac{2f(x)-f(x+he_j)-f(x-he_j)}
			{\vert{}h\vert{}^{1+2\gamma}}\,\dd h,
			\label{eq:symmetric-directional}\\
			\Lambda^{2\gamma}f(x)
			&=\frac{c_{n,\gamma}}2\int_{\R^n}
			\frac{2f(x)-f(x+y)-f(x-y)}
			{\vert{}y\vert{}^{n+2\gamma}}\,\dd y,
			\label{eq:symmetric-isotropic}
		\end{align}
	\end{enumerate}
	where $e_j$ denotes the unit vector in $x_j$-direction, $j=1,2,\cdots,n$.
	For Lebesgue or weak-Lebesgue fields in the main theorem, fractional	operators are understood distributionally by duality:
	$$
	\pair{\Lambda_j^{2\gamma}v}{\phi}
	=\int_{\R^n}v\cdot\Lambda_j^{2\gamma}\phi\,\dd x,
	\qquad \phi\in C_c^\infty(\R^n;\R^n),
	$$
	and similarly for the full operator.
	The pairing is finite by H\"older's inequality. Indeed,
	the fractional image of a compactly supported smooth test
	function is bounded and has, respectively, a directional
	tail of order $\vert{}x_j\vert{}^{-1-2\gamma}$ with bounded transverse
	support, or an isotropic tail of order $\vert{}x\vert{}^{-n-2\gamma}$.
	These images belong to $L^{a,1}$ for every $a>1$.
	The same duality defines tempered distributions on
	Schwartz test functions.
	
	When $\gamma=1$, the operators are the usual Laplacian
	$-\Delta$ or directional Laplacian $-\partial_j^2$, and
	no fractional machinery is needed: the energy identities
	reduce to ordinary integration by parts.
	
	The standard fractional energy representation
	\cite{DNPV} will also be used:
	$$
	\|\Lambda^\gamma f\|_2^2
	=\frac{c_{n,\gamma}}2
	\iint_{\R^n\times\R^n}
	\frac{\vert{}f(x)-f(y)\vert{}^2}{\vert{}x-y\vert{}^{n+2\gamma}}
	\,\dd y\,\dd x,
	$$
	with its one-dimensional, slice-wise counterpart for
	$\Lambda_j^\gamma$.
	
	\subsection{Localized identities and cutoff errors}
	
	Fractional product formulas and localization arguments are
	standard; see, for example,
	\cite{BWZ,ROS}.
	Those results do not directly provide the weak-Lorentz,
	directional cutoff estimates needed here.
	We therefore prove the directional formulation and give
	the isotropic modifications explicitly.
	
We define
\[
T_{j,\gamma,\varepsilon}f(x)
=c_\gamma\int_{|h|>\varepsilon}
\frac{f(x)-f(x+h e_j)}{|h|^{1+2\gamma}}\,\mathrm{d}h,\quad
T_{\gamma,\varepsilon}f(x)
=c_{n,\gamma}\int_{|y|>\varepsilon}
\frac{f(x)-f(x+y)}{|y|^{n+2\gamma}}\,\mathrm{d}y.
\]
They have the following symmetric forms, respectively,
\begin{align*}
	T_{j,\gamma,\varepsilon}f(x)
	&=\frac{c_\gamma}{2}\int_{|h|>\varepsilon}
	\frac{2f(x)-f(x+h e_j)-f(x-h e_j)}{|h|^{1+2\gamma}}\,\mathrm{d}h,\\
	T_{\gamma,\varepsilon}f(x)
	&=\frac{c_{n,\gamma}}{2}\int_{|y|>\varepsilon}
	\frac{2f(x)-f(x+y)-f(x-y)}{|y|^{n+2\gamma}}\,\mathrm{d}y.
\end{align*}
	\begin{lemma}\label{lem:truncation}
		Let $f\in C_c^\infty(\R^n)$ and $0<\gamma<1$.
		For $0<\varepsilon\le1$, both
		$$
		\Lambda_j^{2\gamma}f-T_{j,\gamma,\varepsilon}f,
		\qquad
		\Lambda^{2\gamma}f-T_{\gamma,\varepsilon}f
		$$
		are supported in a fixed bounded set.  Moreover, there is a constant $C_f$, independent of
		$\varepsilon$, such that
		\[
		\left\|\Lambda_j^{2\gamma}f-T_{j,\gamma,\varepsilon}f\right\|_{L^\infty}
		+
		\left\|\Lambda^{2\gamma}f-T_{\gamma,\varepsilon}f\right\|_{L^\infty}
		\le C_f\varepsilon^{2-2\gamma}.
		\]
		Consequently, they tend to zero in $L^{a,d}$ for every
		$1\le a<\infty$, $1\le d\le\infty$  uniformly as $\varepsilon\to0$.
	\end{lemma}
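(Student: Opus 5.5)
The plan is to work with the symmetric forms of both the full operator and the truncated operator, so that the difference is an integral over the small region $|h|\le\varepsilon$, or $|y|\le\varepsilon$ in the isotropic case. By the symmetric-difference representation \eqref{eq:symmetric-directional} and the symmetric form of $T_{j,\gamma,\varepsilon}$, we get
\[
\Lambda_j^{2\gamma}f(x)-T_{j,\gamma,\varepsilon}f(x)
=\frac{c_\gamma}{2}\int_{|h|\le\varepsilon}
\frac{2f(x)-f(x+he_j)-f(x-he_j)}{|h|^{1+2\gamma}}\,\dd h .
\]
The isotropic difference, taken from \eqref{eq:symmetric-isotropic}, is the analogous integral over $|y|\le\varepsilon$ with kernel $|y|^{-n-2\gamma}$. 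Both integrals converge absolutely because of the second-order cancellation in the numerator.

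For the support claim, let $\supp f\subset B_R$. If $|h|\le\varepsilon\le1$ and $\operatorname{dist}(x,B_R)>1$, then $x$, $x+he_j$ and $x-he_j$ all lie outside $B_R$. The integrand then vanishes identically, so both differences are supported in $B_{R+1}$, a set independent of $\varepsilon$. The same argument works with $y$ in place of $he_j$.

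For the $L^\infty$ bound, use Taylor's theorem with second-order remainder:
\[
|2f(x)-f(x+y)-f(x-y)|\le\|D^2f\|_{L^\infty}|y|^2 .
\]
In the directional case only $\|\partial_j^2 f\|_\infty$ is needed. This gives
\[
|\Lambda_j^{2\gamma}f-T_{j,\gamma,\varepsilon}f|
\le\frac{c_\gamma}{2}\|D^2f\|_\infty\int_{|h|\le\varepsilon}|h|^{1-2\gamma}\,\dd h
=\frac{c_\gamma}{2-2\gamma}\|D^2f\|_\infty\,\varepsilon^{2-2\gamma}.
\]
In the isotropic case, polar coordinates give
\[
\frac{c_{n,\gamma}|S^{n-1}|}{2(2-2\gamma)}\|D^2f\|_\infty\,\varepsilon^{2-2\gamma}.
\]
The constant $C_f$ is the sum of these two prefactors, and it is independent of $\varepsilon$. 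The condition $\gamma<1$ is exactly what makes $|h|^{1-2\gamma}$, respectively $|y|^{1-n-2\gamma}$, integrable near $0$.

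For the Lorentz-space consequence, write $g_\varepsilon$ for either difference. It is supported in the fixed set $K=B_{R+1}$, so its distribution function satisfies $\mu_{g_\varepsilon}(t)\le|K|$ for all $t$ and $\mu_{g_\varepsilon}(t)=0$ for $t\ge\|g_\varepsilon\|_\infty$. From the definition of the quasi-norm this yields
\[
\|g_\varepsilon\|_{L^{a,d}}\le C_{a,d}|K|^{1/a}\|g_\varepsilon\|_\infty
\]
for $d<\infty$, and the same bound directly for $d=\infty$. Hence $\|g_\varepsilon\|_{L^{a,d}}\lesssim C_f|K|^{1/a}\varepsilon^{2-2\gamma}\to0$.

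The only delicate point is the first step. We must justify that the principal-value definitions \eqref{eq:def-Lambda}--\eqref{eq:def-Lambdaj} agree pointwise with the symmetric forms. The truncated integrals can be symmetrized exactly by the change of variables $h\mapsto-h$, and the stated equivalence from \cite{Kwa} covers the full operators. Everything after that is a routine estimate.
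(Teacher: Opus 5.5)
Your proposal is correct and follows the paper's proof. You symmetrize so that the difference is the integral over $|h|\le\varepsilon$ (resp.\ $|y|\le\varepsilon$), bound the second difference by Taylor's theorem, note that the support lies in a fixed $1$-neighborhood of $\supp f$, and pass from the $L^\infty$ bound plus fixed support to $L^{a,d}$; you compute the quasi-norm directly from the distribution function, where the paper cites Lorentz H\"older. One small slip: in your isotropic aside the integrand near $0$ is $|y|^{2-n-2\gamma}$, not $|y|^{1-n-2\gamma}$ (the latter would be integrable only for $\gamma<1/2$), although your polar-coordinate computation uses the correct power.
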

	
	\begin{proof}
		For the directional operator, symmetric form gives
		$$
		\Lambda_j^{2\gamma}f-T_{j,\gamma,\varepsilon}f
		=\frac{c_\gamma}2\int_{\vert{}h\vert{}\le\varepsilon}
		\frac{2f(x)-f(x+he_j)-f(x-he_j)}
		{\vert{}h\vert{}^{1+2\gamma}}\,\dd h.
		$$
		Taylor's theorem bounds the numerator by
		$\|\partial_{jj}f\|_\infty\vert{}h\vert{}^2$. Hence
		$$
		\|\Lambda_j^{2\gamma}f-T_{j,\gamma,\varepsilon}f\|_\infty
		\le C_f\int_0^\varepsilon h^{1-2\gamma}\,\dd h
		\le C_f\varepsilon^{2-2\gamma}.
		$$
		If $K=\supp f$, the difference vanishes outside
		$K+\overline{B_1}$. Lemma \ref{lem:holder} then gives
		\[\|\Lambda_j^{2\gamma}f-T_{j,\gamma,\varepsilon}f\|_{L^{a,d}(K+\bar{B_1})}\leq C_{a,d,f}|K+\bar{B}_1|^{\frac{1}{a}}\varepsilon^{2-2\gamma}.
		\le 
		\]
		
		For the isotropic operator, use
		\eqref{eq:symmetric-isotropic} and
		$\vert{}2f(x)-f(x+y)-f(x-y)\vert{}\le C_f\vert{}y\vert{}^2$.
		Polar integration gives the same bound
		$C_f\int_0^\varepsilon\rho^{1-2\gamma}\,\dd\rho$,
		with the same support and convergence conclusions.
	\end{proof}
	
	For real $\eta\in C_c^\infty(\R^n)$, define
	\begin{align*}
		K_{\gamma,j,\eta}(x)
		&=\int_\R
		\frac{\vert{}\eta(x)-\eta(x+he_j)\vert{}^2}
		{\vert{}h\vert{}^{1+2\gamma}}\,\dd h,\\
		K_{\gamma,\eta}(x)
		&=\int_{\R^n}
		\frac{\vert{}\eta(x)-\eta(x+y)\vert{}^2}
		{\vert{}y\vert{}^{n+2\gamma}}\,\dd y.
	\end{align*}
	
	\begin{lemma}\label{lem:localized}
		Let $0<\gamma<1$, $2<r<\infty$, and
		$v\in C^\infty(\R^n)\cap L^{r,\infty}(\R^n)$.	
		If $\int_{\R^n}\vert{}v\vert{}^2K_{\gamma,j,\eta}<\infty$, then
		\begin{equation}\label{eq:localized-identity}
				\pair{\Lambda_j^{2\gamma}v}{\eta^2v}
				=\|\Lambda_j^\gamma(\eta v)\|_2^2-\frac{c_\gamma}2
				\int_{\R^n}\int_\R
				\frac{\vert{}\eta(x)-\eta(x+he_j)\vert{}^2
					v(x)\cdot v(x+he_j)}
				{\vert{}h\vert{}^{1+2\gamma}}\,\dd h\,\dd x.
		\end{equation}
		In particular,
		\begin{equation}\label{eq:localized-inequality}
			\|\Lambda_j^\gamma(\eta v)\|_2^2
			\le\pair{\Lambda_j^{2\gamma}v}{\eta^2v}
			+\frac{c_\gamma}2\int_{\R^n}\vert{}v\vert{}^2K_{\gamma,j,\eta}\,\dd x.
		\end{equation}
		For the full operator, if
		$\int_{\R^n}\vert{}v\vert{}^2K_{\gamma,\eta}<\infty$, then
		\begin{equation}\label{eq:localized-identity2}
				\pair{\Lambda^{2\gamma}v}{\eta^2v}
				=\|\Lambda^\gamma(\eta v)\|_2^2-\frac{c_{n,\gamma}}2
				\int_{\R^n}\int_{\R^n}
				\frac{\vert{}\eta(x)-\eta(x+y)\vert{}^2v(x)\cdot v(x+y)}
				{\vert{}y\vert{}^{n+2\gamma}}\,\dd y\,\dd x,
		\end{equation}
		and
		\begin{equation}\label{eq:localized-inequality2}
			\|\Lambda^\gamma(\eta v)\|_2^2
			\le\pair{\Lambda^{2\gamma}v}{\eta^2v}
			+\frac{c_{n,\gamma}}2\int_{\R^n}\vert{}v\vert{}^2K_{\gamma,\eta}\,\dd x.
		\end{equation}
	\end{lemma}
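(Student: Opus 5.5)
The identity \eqref{eq:localized-identity} should come from computing the pairing through the truncated operators of Lemma~\ref{lem:truncation} and then symmetrizing. By definition, $\pair{\Lambda_j^{2\gamma}v}{\eta^2v}=\int v\cdot\Lambda_j^{2\gamma}(\eta^2 v)\,\dd x$. Here $\eta^2v\in C_c^\infty$, so Lemma~\ref{lem:truncation} lets us replace $\Lambda_j^{2\gamma}(\eta^2v)$ by $T_{j,\gamma,\varepsilon}(\eta^2v)$ up to an error. That error is supported in a fixed compact set and tends to $0$ in $L^\infty$. Since $v$ is smooth, hence locally bounded, the error in the pairing vanishes as $\varepsilon\to0$. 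It therefore suffices to study
\[
I_\varepsilon=c_\gamma\int_{\R^n}\int_{|h|>\varepsilon}\frac{v(x)\cdot\big(\eta^2(x)v(x)-\eta^2(x+he_j)v(x+he_j)\big)}{|h|^{1+2\gamma}}\,\dd h\,\dd x .
\]
The integrand is absolutely integrable on $\{|h|>\varepsilon\}$. This uses $v\in L^{r,\infty}$ with $r>2$: the term $v(x)\cdot\eta^2(x+he_j)v(x+he_j)$, after integrating in $h$ against the $|h|^{-1-2\gamma}$ tail, is controlled by Hölder in Lorentz spaces (Lemma~\ref{lem:holder}) together with $\int|v|^2K_{\gamma,j,\eta}<\infty$. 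In particular, the piece $\int|v(x)|^2\eta^2(x)\int_{|h|>\varepsilon}|h|^{-1-2\gamma}$ is finite because $\eta$ has compact support.

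For the algebraic step, abbreviate $y=x+he_j$, write $\eta=\eta(x)$, $\eta'=\eta(y)$, $v=v(x)$, $v'=v(y)$, and use
\[
v\cdot(\eta^2v-\eta'^2v')=\tfrac12|\eta v-\eta'v'|^2+\tfrac12\big(\eta^2|v|^2-\eta'^2|v'|^2\big)-\tfrac12(\eta-\eta')^2\,v\cdot v' .
\]
The middle term is antisymmetric under the change of variables $(x,h)\mapsto(x+he_j,-h)$, which preserves the region $|h|>\varepsilon$. Its integral therefore vanishes, since each of its two pieces is separately absolutely integrable on the truncated region. This gives
\[
I_\varepsilon=\frac{c_\gamma}{2}\iint_{|h|>\varepsilon}\frac{|\eta v(x)-\eta v(x+he_j)|^2}{|h|^{1+2\gamma}}-\frac{c_\gamma}2\iint_{|h|>\varepsilon}\frac{|\eta(x)-\eta(x+he_j)|^2v(x)\cdot v(x+he_j)}{|h|^{1+2\gamma}} .
\]

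Now let $\varepsilon\to0$. In the first term the integrand is nonnegative, so monotone convergence and the slice-wise Gagliardo representation of $\|\Lambda_j^\gamma(\eta v)\|_2^2$ (valid because $\eta v\in C_c^\infty$) give the limit $\|\Lambda_j^\gamma(\eta v)\|_2^2$. In the second term we use $|v(x)\cdot v(x+he_j)|\le\frac12(|v(x)|^2+|v(x+he_j)|^2)$ and the same change of variables. This shows the full integrand is dominated by a function with integral $\int|v|^2K_{\gamma,j,\eta}<\infty$, and dominated convergence applies. Together these yield \eqref{eq:localized-identity}. The bound just described also controls the second term in absolute value by $\frac{c_\gamma}2\int|v|^2K_{\gamma,j,\eta}$, which gives \eqref{eq:localized-inequality}.

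The isotropic statements \eqref{eq:localized-identity2} and \eqref{eq:localized-inequality2} follow verbatim. One replaces $he_j$ by $y\in\R^n$, uses $T_{\gamma,\varepsilon}$ and the symmetry $(x,y)\mapsto(x+y,-y)$, and invokes the full Gagliardo identity. The main obstacle is justifying absolute integrability of each split piece at fixed $\varepsilon$, so that the antisymmetric cancellation is legitimate. This requires handling the nonlocal tail $\int v(x)\cdot\eta^2(x+he_j)v(x+he_j)|h|^{-1-2\gamma}\,\dd h\,\dd x$. I would bound it by $\eta^2(y)|v(y)|$ times $\int_{|h|>\varepsilon}|v(y-he_j)||h|^{-1-2\gamma}\,\dd h$. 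The inner integral is locally bounded in $y$ by splitting into $|h|\le R$, which is bounded because $v$ is smooth, and $|h|>R$, which is handled on lines via Lorentz Hölder and integrated over the compact $y$-support.
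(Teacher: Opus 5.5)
Your route is essentially the paper's. You truncate, symmetrize under $(x,h)\mapsto(x+he_j,-h)$, and let $\varepsilon\to0$, using dominated convergence in the cross term via $|v(x)\cdot v(x+he_j)|\le\frac12(|v(x)|^2+|v(x+he_j)|^2)$. Your other choices are harmless variations:
\begin{itemize}
\item You remove the truncation error by local boundedness of $v$ rather than by $L^{r',1}$ convergence.
\item You identify the limit of the truncated Gagliardo integral by monotone convergence. The paper instead applies Lemma~\ref{lem:truncation} and Plancherel to $\int\eta v\cdot T_{j,\gamma,\varepsilon}(\eta v)$, which is the same quantity.
\end{itemize}
Two steps, however, are wrong as written.

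First, your pointwise identity is false. Take $v=v'$ a unit vector, $\eta=1$, $\eta'=0$: the left side is $1$, while the right side is $\frac12+\frac12-\frac12=\frac12$. The missing term is $\frac12(\eta^2-\eta'^2)\,v\cdot v'$. This term is also antisymmetric under the swap. It is dominated by $\frac12(\eta^2+\eta'^2)|v||v'|$, which is integrable on $\{|h|>\varepsilon\}$ by the same tail bound you need anyway. So it integrates to zero, and your formula for $I_\varepsilon$ is nevertheless correct. The clean way, as in the paper, is to symmetrize first,
\[
I_\varepsilon=\frac{c_\gamma}{2}\iint_{|h|>\varepsilon}\frac{(v-v')\cdot(\eta^2v-\eta'^2v')}{|h|^{1+2\gamma}}\,\dd h\,\dd x,
\]
and then apply $(a-b)\cdot(s^2a-t^2b)=|sa-tb|^2-(s-t)^2\,a\cdot b$.

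Second, in your last paragraph the inner integral $\int_{|h|>R}|v(y-he_j)||h|^{-1-2\gamma}\dd h$ need not be locally bounded in $y$. There is also no Lorentz H\"older inequality ``on lines'': $v\in L^{r,\infty}(\R^n)$ gives no control on individual lines. For example, place smooth bumps at $ke_j$ of height $k^{2\gamma}$, length about $1$ in the $e_j$ direction, and very small transverse width. This gives a smooth $v\in L^r$ for which the integral diverges at every point of the line $\R e_j$. Only the integrated quantity is finite, and there are two ways to see it:
\begin{itemize}
\item By Tonelli, for each fixed $h$ one has $\int\eta^2(y)|v(y)||v(y-he_j)|\,\dd y\le C\|\eta^2v\|_{L^{r',1}}\|v\|_{L^{r,\infty}}$ uniformly in $h$. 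Then integrate $|h|^{-1-2\gamma}$ over $|h|>R$.
\item Alternatively, AM--GM together with $\eta^2(x+he_j)\le2|\eta(x+he_j)-\eta(x)|^2+2\eta^2(x)$ bounds the whole tail by $\int|v|^2K_{\gamma,j,\eta}+C\varepsilon^{-2\gamma}\int\eta^2|v|^2$. This uses only the stated hypothesis.
\end{itemize}
With these two repairs the argument is complete, and the isotropic case goes through verbatim.
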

	
	\begin{proof}
		We prove the directional identity. Symmetrize the truncated
		kernel and use
		$$
		(a-b)\cdot(s^2a-t^2b)
		=\vert{}sa-tb\vert{}^2-a\cdot b\,(s-t)^2.
		$$
		This yields
		$$
			\int\eta^2v\cdot T_{j,\gamma,\varepsilon}v
			=\int\eta v\cdot T_{j,\gamma,\varepsilon}(\eta v)-\frac{c_\gamma}2
			\int_{\R^n}\int_{\vert{}h\vert{}>\varepsilon}
			\frac{\vert{}\eta(x)-\eta(x+he_j)\vert{}^2v(x)\cdot v(x+he_j)}
			{\vert{}h\vert{}^{1+2\gamma}}\,\dd h\,\dd x.
		$$
		Since $\eta v\in C_c^\infty(\R^n)$, Lemma~\ref{lem:truncation}
		and Plancherel's theorem give
		$$
		\int_{\R^n}\eta v\cdot T_{j,\gamma,\varepsilon}(\eta v)
		\longrightarrow\|\Lambda_j^\gamma(\eta v)\|_2^2.
		$$
		The absolute integral of the cross term is bounded by
		$\int_{\R^n}\vert{}v\vert{}^2K_{\gamma,j,\eta}$, using
		$$
		\vert{}v(x)\cdot v(x+he_j)\vert{}
		\le\frac12\bigl(\vert{}v(x)\vert{}^2+\vert{}v(x+he_j)\vert{}^2\bigr)
		$$
		and translation symmetry. Thus dominated convergence
		applies to that term.
		
		Finally,
		$$
		\int_{\R^n}\eta^2v\cdot T_{j,\gamma,\varepsilon}v\,\dd x
		=\int_{\R^n} v\cdot T_{j,\gamma,\varepsilon}(\eta^2v)\,\dd x.
		$$
		Lemma~\ref{lem:truncation} gives convergence of the test
		function images in $L^{r',1}$, and Lorentz H\"older
		therefore gives convergence to
		$\pair{\Lambda_j^{2\gamma}v}{\eta^2v}$.
		This proves \eqref{eq:localized-identity}; the cross-term
		bound proves \eqref{eq:localized-inequality}.
		
		For the full operator, replace $h\in\R$ by $y\in\R^n$,
		$\vert{}h\vert{}^{-1-2\gamma}$ by $\vert{}y\vert{}^{-n-2\gamma}$, and
		$c_\gamma$ by $c_{n,\gamma}$. The same argument proves
		\eqref{eq:localized-identity2} and
		\eqref{eq:localized-inequality2}.
	\end{proof}
	
	\begin{lemma}\label{lem:decay_cutoff_kernel}
		Let $0<\gamma<1$, and choose $A>1$ so that
		$\supp\eta\subset B_A$. Writing $x=(x_j,x'_j)$,
		$$
		K_{\gamma,j,\eta}(x)
		\le C_{\gamma,\eta}
		\mathbf1_{\{\vert{}x'_j\vert{}\le A\}}(1+\vert{}x_j\vert{})^{-1-2\gamma},
		$$
		and
		\begin{equation}\label{eq:isotropic-kernel-bound}
			K_{\gamma,\eta}(x)
			\le C_{\gamma,\eta}(1+\vert{}x\vert{})^{-n-2\gamma}.
		\end{equation}
		Both kernels belong to $L^{a,1}(\R^n)$ for every $a>1$. Here ${\bf 1}_E$ denotes the indicator function on measurable set $E$.
	\end{lemma}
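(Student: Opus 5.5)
The approach is a direct pointwise estimate of the kernels, splitting the integration variable into a near region and a far region, followed by an explicit computation of the Lorentz norms.

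\emph{Directional kernel.} Fix $x=(x_j,x'_j)$ and use $\supp\eta\subset B_A$.

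First suppose $\vert{}x'_j\vert{}>A$. Then for every $h$ both $x$ and $x+he_j$ have transverse part $x'_j$, so both lie outside $B_A$. Hence $\eta(x)=\eta(x+he_j)=0$ and $K_{\gamma,j,\eta}(x)=0$. This gives the indicator factor.

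Now take $\vert{}x'_j\vert{}\le A$. We split into two cases.
\begin{itemize}
\item If $\vert{}x_j\vert{}\le 2A$, split the $h$-integral at $\vert{}h\vert{}=1$. On $\vert{}h\vert{}\le1$ use the mean value bound $\vert{}\eta(x)-\eta(x+he_j)\vert{}^2\le\|\nabla\eta\|_\infty^2h^2$. On $\vert{}h\vert{}>1$ use the bound $4\|\eta\|_\infty^2$. This gives
$$
K_{\gamma,j,\eta}(x)\le C\Bigl(\int_0^1h^{1-2\gamma}\dd h+\int_1^\infty h^{-1-2\gamma}\dd h\Bigr)\le C_{\gamma,\eta},
$$
which is comparable to $(1+\vert{}x_j\vert{})^{-1-2\gamma}$ on this bounded range.
\item If $\vert{}x_j\vert{}>2A$, then $\eta(x)=0$. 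The integrand is nonzero only when $x+he_j\in B_A$, which forces $\vert{}x_j+h\vert{}<A$. In particular $\vert{}h\vert{}\ge\vert{}x_j\vert{}-A\ge\vert{}x_j\vert{}/2$, on a set of $h$ of measure at most $2A$. So
$$
K_{\gamma,j,\eta}(x)\le\|\eta\|_\infty^2\,2A\,(\vert{}x_j\vert{}/2)^{-1-2\gamma}\le C(1+\vert{}x_j\vert{})^{-1-2\gamma}.
$$
\end{itemize}

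\emph{Isotropic kernel.} The same dichotomy applies, with $B_{2A}$ in place of the slab condition.
\begin{itemize}
\item For $\vert{}x\vert{}\le2A$, split at $\vert{}y\vert{}=1$ and use polar coordinates. This gives boundedness, since $\int_0^1\rho^{1-2\gamma}\dd\rho$ and $\int_1^\infty\rho^{-1-2\gamma}\dd\rho$ are finite.
\item For $\vert{}x\vert{}>2A$, the integrand is supported where $x+y\in B_A$. That set has measure $\vert{}B_A\vert{}$, and on it $\vert{}y\vert{}\ge\vert{}x\vert{}/2$. Hence $K_{\gamma,\eta}(x)\le C(1+\vert{}x\vert{})^{-n-2\gamma}$, which is \eqref{eq:isotropic-kernel-bound}.
\end{itemize}

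\emph{Lorentz membership.} The function $g(x)=(1+\vert{}x\vert{})^{-n-2\gamma}$ is bounded and lies in $L^1$. So $g\in L^1\cap L^\infty$, and real interpolation (or the embedding $L^{a_0}\cap L^{a_1}\subset L^{a,1}$ for $a_0<a<a_1$) gives $g\in L^{a,1}$ for every $a>1$.

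For the directional bound $G(x)=\mathbf1_{\{\vert{}x'_j\vert{}\le A\}}(1+\vert{}x_j\vert{})^{-1-2\gamma}$ the same reasoning applies. $G$ is bounded by $1$, and
$$
\|G\|_{L^1}=\vert{}B^{n-1}_A\vert{}\int_\R(1+\vert{}t\vert{})^{-1-2\gamma}\dd t<\infty.
$$
Hence $G\in L^1\cap L^\infty\subset L^{a,1}$ for all $1<a<\infty$. Since $L^{a,1}$ is a lattice quasi-normed space, the pointwise bounds transfer this membership to the kernels.

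The only mildly delicate step is the far-field case. There one must notice that $\eta(x)$ vanishes, so only the translate contributes, and that the set of contributing $h$ (or $y$) has bounded measure while $\vert{}h\vert{}$ (or $\vert{}y\vert{}$) stays comparable to $\vert{}x_j\vert{}$ (or $\vert{}x\vert{}$). Everything else is routine.
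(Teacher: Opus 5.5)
Your proof is correct and follows essentially the same route as the paper: uniform boundedness from splitting $|\eta(x)-\eta(x+he_j)|$ into a Lipschitz bound for small $|h|$ and a sup bound for large $|h|$, vanishing for $|x'_j|>A$, and the far-field decay using $\eta(x)=0$ together with a contributing set of bounded measure on which $|h|\gtrsim|x_j|$ (resp.\ $|y|\gtrsim|x|$). The only cosmetic difference is the Lorentz step, where you use $L^1\cap L^\infty\subset L^{a,1}$ (which follows from Chebyshev's bound $\mu_f(t)\le\|f\|_1/t$) instead of the paper's explicit distribution-function estimate $|\{F>\lambda\}|\le C\lambda^{-1/(1+2\gamma)}$; both suffice for every $a>1$.
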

	
	\begin{proof}
		For the directional kernel, the estimate
		$$
		\vert{}\eta(x+he_j)-\eta(x)\vert{}
		\le\min\{\|\partial_j\eta\|_\infty\vert{}h\vert{},
		2\|\eta\|_\infty\}
		$$
		shows that $K_{\gamma,j,\eta}$ is uniformly bounded.
		
		If $\vert{}x'_j\vert{}>A$, the kernel vanishes, since translation
		along $e_j$ preserves the transverse coordinates.
		If $\vert{}x_j\vert{}>2A$, then $\eta(x)=0$, and a nonzero
		$\eta(x+he_j)$ requires $\vert{}x_j+h\vert{}\le A$. Hence
		$$
		K_{\gamma,j,\eta}(x)
		\le\|\eta\|_\infty^2
		\int_{\{\vert{}x_j+h\vert{}\le A\}}\vert{}h\vert{}^{-1-2\gamma}\,\dd h
		\le C_{\gamma,\eta}(1+\vert{}x_j\vert{})^{-1-2\gamma}.
		$$
		Together with boundedness, this proves the claimed
		directional bound.
		
		For
		$$
		F(x)=\mathbf1_{\{\vert{}x'_j\vert{}\le A\}}
		(1+\vert{}x_j\vert{})^{-1-2\gamma},
		$$
		its distribution function satisfies
		$$
		\vert{}\{F>\lambda\}\vert{}
		\le C_A\lambda^{-1/(1+2\gamma)},
		\qquad 0<\lambda<1.
		$$
		Thus, for every $a>1$,
		$$
		\|F\|_{L^{a,1}}
		\le C\int_0^1\lambda^{-1/[a(1+2\gamma)]}\,\dd\lambda
		<\infty.
		$$
		
		For the isotropic kernel, the same Lipschitz estimate
		and polar integration give uniform boundedness.
		If $\vert{}x\vert{}>2A$, then $\eta(x)=0$, while
		$\eta(x+y)\ne0$ requires $x+y\in B_A$ and
		$\vert{}y\vert{}\ge\vert{}x\vert{}/2$. Therefore
		$$
		K_{\gamma,\eta}(x)
		\le C\int_{-x+B_A}\vert{}y\vert{}^{-n-2\gamma}\,\dd y
		\le C_{\gamma,\eta}(1+\vert{}x\vert{})^{-n-2\gamma}.
		$$
		This proves \eqref{eq:isotropic-kernel-bound}.
		The majorant has distribution function bounded by
		$C\lambda^{-n/(n+2\gamma)}$ for $0<\lambda<1$,
		which gives its $L^{a,1}$ integrability for every $a>1$.
	\end{proof}
	
	Choose $\eta\in C_c^\infty(\R^n)$ with
	$$
	0\le\eta\le1,\qquad
	\eta=1\text{ on }B_1,\qquad
	\supp\eta\subset B_2,
	$$
	and put
	$$
	\eta_R(x)=\eta(x/R),\qquad A_R=B_{2R}\setminus B_R.
	$$
	This cut-off function will be used throughout this paper. In what follows, unless otherwise specified, all limits are taken as $R\to\infty$.
	\begin{lemma}\label{lem:cutoff-error}
		Let $0<\gamma<1$, $2<r<2n/(n-2\gamma)$, and
		$v\in L^{r,\infty}(\R^n)$. Then
		\begin{equation}\label{eq:2.9}
		\int_{\R^n}\vert{}v\vert{}^2K_{\gamma,j,\eta_R}\,\dd x
		\le CR^{n-2n/r-2\gamma}\|v\|_{L^{r,\infty}}^2
		\longrightarrow0.
		\end{equation}
		The same estimate and limit hold with
		$K_{\gamma,j,\eta_R}$ replaced by $K_{\gamma,\eta_R}$. The result remains true for $r=2n/(n-2\gamma)$ with $v\in L^r(\R^n)$.
	\end{lemma}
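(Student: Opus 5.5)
The plan is to reduce the estimate to the unscaled kernel via scaling, and then apply the Lorentz H\"older inequality of Lemma~\ref{lem:holder} with the pair $L^{r/2,\infty}$--$L^{(r/2)',1}$. Since $|v|^2\in L^{r/2,\infty}$ with $\||v|^2\|_{L^{r/2,\infty}}=\|v\|_{L^{r,\infty}}^2$ and $r/2>1$, we get
\[
\int_{\R^n}|v|^2K_{\gamma,j,\eta_R}\,\dd x\le C\|v\|_{L^{r,\infty}}^2\,\|K_{\gamma,j,\eta_R}\|_{L^{(r/2)',1}},
\]
so the task is to compute how $\|K_{\gamma,j,\eta_R}\|_{L^{a,1}}$ with $a=(r/2)'=r/(r-2)$ depends on $R$.

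The scaling step comes first. Substituting $h=Rh'$ in the definition gives
\[
K_{\gamma,j,\eta_R}(x)=R^{-2\gamma}K_{\gamma,j,\eta}(x/R).
\]
The same holds for $K_{\gamma,\eta_R}$ with $y=Ry'$, since $\dd y/|y|^{n+2\gamma}$ scales as $R^{-2\gamma}$. Dilation acts on Lorentz norms by
\[
\|f(\cdot/R)\|_{L^{a,1}}=R^{n/a}\|f\|_{L^{a,1}},
\]
which follows because the distribution function scales by $R^n$. Hence
\[
\|K_{\gamma,j,\eta_R}\|_{L^{a,1}}=R^{n/a-2\gamma}\|K_{\gamma,j,\eta}\|_{L^{a,1}},
\]
and the last norm is finite by Lemma~\ref{lem:decay_cutoff_kernel}, because $a>1$. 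With $n/a=n(1-2/r)=n-2n/r$, this gives exactly the bound \eqref{eq:2.9}.

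For the limit, note that $n-2n/r-2\gamma<0$ is equivalent to $1/r>(n-2\gamma)/(2n)$, that is, $r<2n/(n-2\gamma)$. So the power of $R$ tends to zero, and the same argument handles the isotropic kernel. This step is routine; the only real input is the $L^{a,1}$ integrability already established in Lemma~\ref{lem:decay_cutoff_kernel}.

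The endpoint $r=2n/(n-2\gamma)$ with $v\in L^r$ is the main obstacle, because there the exponent is zero and the estimate only gives boundedness. I would use the strong-space structure and split $v=v\mathbf 1_{B_{\rho}}+v\mathbf 1_{B_\rho^c}$ for fixed $\rho$. For the far part, apply ordinary H\"older with $L^{r/2}$ and $L^{a}$, using $\|K_{\gamma,j,\eta_R}\|_{L^a}=\|K_{\gamma,j,\eta}\|_{L^a}$ (scale invariant at the endpoint). This bounds the contribution by $C\|v\|_{L^r(B_\rho^c)}^2$, which is small once $\rho$ is large. For the near part, the uniform bound
\[
K_{\gamma,j,\eta_R}\le CR^{-2\gamma}
\]
(from Lemma~\ref{lem:decay_cutoff_kernel} together with scaling) gives a contribution of at most $CR^{-2\gamma}\|v\|_{L^2(B_\rho)}^2$, which tends to $0$ as $R\to\infty$ because $v\in L^r$ with $r>2$ is locally $L^2$. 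Letting $R\to\infty$ and then $\rho\to\infty$ yields the limit zero.
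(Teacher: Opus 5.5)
Your proposal is correct and follows essentially the same route as the paper. Both use Lorentz H\"older with $a=r/(r-2)$ and the scaling $K_{\gamma,j,\eta_R}(x)=R^{-2\gamma}K_{\gamma,j,\eta}(x/R)$ to get the factor $R^{n/a-2\gamma}$, and at the endpoint the same split into $B_\rho$ and its complement, using $\|K_R\|_{L^\infty}\le CR^{-2\gamma}$ on the ball and the scale-invariant $L^a$ bound outside it.
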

	
	\begin{proof}
		Set $a=r/(r-2)>1$. Lorentz H\"older gives
		\begin{equation}\label{eq:lorentz-pairing}
			\int_{\R^n}\vert{}v\vert{}^2K_{\gamma,j,\eta_R}\,\dd x
			\le C\|v\|_{L^{r,\infty}}^2\|K_{\gamma,j,\eta_R}\|_{L^{a,1}}.
		\end{equation}
		For the directional kernel, changing variables $h=Rt$
		gives
		$$
		K_{\gamma,j,\eta_R}(x)
		=R^{-2\gamma}K_{\gamma,j,\eta}(x/R).
		$$
		By the definition of Lorentz quasi-norm and Lemma \ref{lem:decay_cutoff_kernel}, we have
		\begin{equation}\label{eq:kernel-scaling}
			\|K_{\gamma,j,\eta_R}\|_{L^{a,1}}
			=R^{n/a-2\gamma}\|K_{\gamma,j,\eta}\|_{L^{a,1}}
			\le CR^{n-2n/r-2\gamma}.
		\end{equation}
	Since $r<2n/(n-2\gamma)$, \eqref{eq:lorentz-pairing} and \eqref{eq:kernel-scaling} give \eqref{eq:2.9}.
		
		For the full kernel, changing variables $y=Rz$ gives
		$K_{\gamma,\eta_R}(x)=R^{-2\gamma}K_{\gamma,\eta}(x/R)$.
		Its $L^{a,1}$ integrability was proved in
		Lemma~\ref{lem:decay_cutoff_kernel}, so the same argument proves the isotropic estimate.

		It remains to consider the endpoint $r=2n/(n-2\gamma)$.
		By Lemma~\ref{lem:decay_cutoff_kernel},
		\[
		K_{\gamma,j,\eta},\ K_{\gamma,\eta}\in L^{a,1}(\mathbb R^n)
		\subset L^a(\mathbb R^n).
		\]
		The scaling identities above therefore imply 
		\[	\sup_{R\ge1}
		\left(
		\|K_{\gamma,j,\eta_R}\|_{L^a}
		+
		\|K_{\gamma,\eta_R}\|_{L^a}
		\right)
		<\infty.
		\]
		Let $K_R$ denote either $K_{\gamma,j,\eta_R}$ or
		$K_{\gamma,\eta_R}$. For $L>0$, write
		\[
		\int_{\mathbb R^n}|v|^2K_R\,\dd x
		=
		\int_{B_L}|v|^2K_R\,\dd x
		+
		\int_{\mathbb R^n\setminus B_L}|v|^2K_R\,\dd x.
		\]
		Since the unscaled kernel is bounded, its scaling gives
		$
		\|K_R\|_{L^\infty}\le C R^{-2\gamma}.
		$
		Therefore, for every fixed $L>0$,
		\[
		\int_{B_L}|v|^2K_R\,\dd x
		\le
		C R^{-2\gamma}\int_{B_L}|v|^2\,\dd x
		\longrightarrow0
		\qquad\text{as }R\to\infty.
		\]
		Moreover, since $r/2=a'$ and
		$\sup_{R\ge1}\|K_R\|_{L^a}<\infty$, H\"older's inequality gives
		\[
		\int_{\mathbb R^n\setminus B_L}|v|^2K_R\,\dd x
		\le
		\|v\|_{L^r(\mathbb R^n\setminus B_L)}^2
		\|K_R\|_{L^a}
		\le
		C\|v\|_{L^r(\mathbb R^n\setminus B_L)}^2.
		\]
		Consequently, for every fixed $L>0$,
		\[
		\limsup_{R\to\infty}
		\int_{\mathbb R^n}|v|^2K_R\,\dd x
		\le
		C\|v\|_{L^r(\mathbb R^n\setminus B_L)}^2.
		\]
		Because $v\in L^r(\mathbb R^n)$, the right-hand side tends to zero
		as $L\to\infty$. Hence
		\[
		\limsup_{R\to\infty}
		\int_{\mathbb R^n}|v|^2K_R\,\dd x=0.
		\]
		The integrals are nonnegative, so
		\[
		\int_{\mathbb R^n}|v|^2K_R\,\dd x\longrightarrow0
		\qquad\text{as }R\to\infty.
		\]
	\end{proof}

	\subsection{Classical localized energy identities}
	
	At dissipation exponent one, the fractional localization formulas
	are replaced by ordinary integration by parts.
	
	\begin{lemma}
		\label{lem:classical-localized}
		Let $f\in C^\infty(\R^n)$ and let
		$\eta\in C_c^\infty(\R^n)$ be real-valued. Then, for every
		$j\in\{1,\ldots,n\}$,
		\begin{equation}\label{eq:classical-localized-directional}
			\pair{-\partial_j^2f}{\eta^2f}
			=
			\|\partial_j(\eta f)\|_2^2
			-
			\int_{\R^n}|\partial_j\eta|^2|f|^2\,\dd x.
		\end{equation}
		Likewise,
		\begin{equation}\label{eq:classical-localized-full}
			\pair{-\Delta f}{\eta^2f}
			=
			\|\nabla(\eta f)\|_2^2
			-
			\int_{\R^n}|\nabla\eta|^2|f|^2\,\dd x.
		\end{equation}
		If, in addition, $f\in L^{r,\infty}(\R^n)$ for $2<r<\frac{2n}{n-2}$,
		then the cutoffs $\eta_R(x)=\eta(x/R)$ fixed above satisfy
		\begin{equation}\label{eq:classical-cutoff-error}
			\int_{\R^n}|\partial_j\eta_R|^2|f|^2\,\dd x
			\le
			C R^{n-2-2n/r}
			\|f\|_{L^{r,\infty}(\R^n)}^2
			\longrightarrow0
			\qquad\text{as }R\to\infty.
		\end{equation}
		If $f\in L^r(\R^n)$ for $r=\frac{2n}{n-2}$, we also have
		\begin{equation}\label{eq:classical-cutoff-end}
			\int_{\R^n}|\partial_j\eta_R|^2|f|^2\,\dd x
			\le
			C R^{n-2-2n/r}
			\|f\|_{L^{r}(\R^n)}^2
			\longrightarrow0
			\qquad\text{as }R\to\infty.
		\end{equation}
	\end{lemma}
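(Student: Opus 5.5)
The identities follow from the product rule and one integration by parts; the decay estimates follow from scaling and the Lorentz H\"older inequality (Lemma~\ref{lem:holder}), exactly as in Lemma~\ref{lem:cutoff-error}.

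\emph{The identities.} Since $\eta\in C_c^\infty$, all integrands below are compactly supported, so integration by parts produces no boundary terms. We have
\[
\pair{-\partial_j^2 f}{\eta^2 f}=\int\partial_j f\cdot\partial_j(\eta^2 f)\,\dd x
=\int\bigl(\eta^2|\partial_j f|^2+2\eta\,\partial_j\eta\, f\cdot\partial_j f\bigr)\dd x .
\]
Expanding the square gives
\[
|\partial_j(\eta f)|^2=\eta^2|\partial_j f|^2+2\eta\,\partial_j\eta\, f\cdot\partial_j f+|\partial_j\eta|^2|f|^2 .
\]
Comparing the two displays yields \eqref{eq:classical-localized-directional}. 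This is the local analogue of the algebraic identity used in Lemma~\ref{lem:localized}. Summing over $j=1,\dots,n$ gives \eqref{eq:classical-localized-full}.

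\emph{The cutoff error.} We have $\partial_j\eta_R(x)=R^{-1}(\partial_j\eta)(x/R)$, which is supported in the annulus $A_R$. Put $a=r/(r-2)>1$; then $|f|^2\in L^{r/2,\infty}$ with quasi-norm comparable to $\|f\|_{L^{r,\infty}}^2$, and $r/2=a'$. Lemma~\ref{lem:holder} gives
\[
\int|\partial_j\eta_R|^2|f|^2\,\dd x\le C\|f\|_{L^{r,\infty}}^2\,\bigl\||\partial_j\eta_R|^2\bigr\|_{L^{a,1}} .
\]
Since $|\partial_j\eta_R|^2=R^{-2}|\partial_j\eta|^2(\cdot/R)$ and dilation scales the $L^{a,1}$ norm by $R^{n/a}$,
\[
\bigl\||\partial_j\eta_R|^2\bigr\|_{L^{a,1}}=R^{n/a-2}\bigl\||\partial_j\eta|^2\bigr\|_{L^{a,1}} ,
\]
and the last norm is finite because $|\partial_j\eta|^2$ is bounded with compact support. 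As $n/a=n-2n/r$, this proves the bound in \eqref{eq:classical-cutoff-error}. Its exponent $n-2-2n/r$ is negative precisely when $r<2n/(n-2)$, so the right-hand side tends to $0$. For $n=2$ the condition reads $r<\infty$ and the exponent $-4/r$ is again negative.

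\emph{The endpoint $r=2n/(n-2)$, $f\in L^r$.} Here the exponent is zero, so we use ordinary H\"older on $A_R$ with exponents $r/2$ and $n/2$:
\[
\int|\partial_j\eta_R|^2|f|^2\,\dd x\le \|f\|_{L^r(A_R)}^2\,\|\partial_j\eta_R\|_{L^n}^2
=\|\nabla\eta\|_{L^n}^2\,\|f\|_{L^r(A_R)}^2 ,
\]
since $\|\partial_j\eta_R\|_{L^n}$ is scale invariant. This gives the stated bound with $R^0$, and the right-hand side tends to zero because $\|f\|_{L^r(\R^n\setminus B_R)}\to0$ when $f\in L^r$. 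This is the same tail argument as at the endpoint of Lemma~\ref{lem:cutoff-error}, but simpler: the support of $\partial_j\eta_R$ already lies outside $B_R$, so no splitting is needed.

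The only delicate point is this endpoint: the power of $R$ gives no decay there, and the limit comes solely from the vanishing of the annular norms of $f$. This uses the strong-space assumption; for a general $f\in L^{r,\infty}$ the annular quasi-norms need not vanish.
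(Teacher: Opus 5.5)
Your proof is correct and follows the paper's route: integration by parts for the identities, and H\"older plus scaling on the annulus $A_R$ for the cutoff error. The paper uses $\|\nabla\eta_R\|_\infty\le C/R$ together with $\int_{A_R}|f|^2\le C|A_R|^{1-2/r}\|f\|_{L^{r,\infty}}^2$ instead of your $L^{a,1}$ scaling, which is an immaterial difference. Your explicit endpoint argument via $\|f\|_{L^r(A_R)}\to0$ supplies a step that the paper's written proof leaves implicit; note only that $\|\partial_j\eta_R\|_{L^n}=\|\partial_j\eta\|_{L^n}\le\|\nabla\eta\|_{L^n}$, so your final equality should be an inequality.
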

	
	\begin{proof}
		The identities	\eqref{eq:classical-localized-directional} and
		\eqref{eq:classical-localized-full} follow directly from integrating by parts.
		For the cutoff estimate, recall that
		\[
		\supp\nabla\eta_R\subset A_R=B_{2R}\setminus B_R,
		\qquad
		\|\nabla\eta_R\|_\infty\le \frac{C}{R}.
		\]
		Since $r>2$, H\"older's inequality gives
		\[
		\int_{A_R}|f|^2\,\dd x
		\le
		C_r |A_R|^{1-2/r}
		\|f\|_{L^{r,\infty}(A_R)}^2.
		\]
		Consequently,
		\[
		\int_{\R^n}|\nabla\eta_R|^2|f|^2\,\dd x
		\le
		\frac{C}{R^2}\int_{A_R}|f|^2\,\dd x\le
		C R^{n-2-2n/r}
		\|f\|_{L^{r,\infty}(\R^n)}^2.
		\]
		The exponent $n-2-2n/r$ is strictly negative because
		$r<2n/(n-2)$. This proves
		\eqref{eq:classical-cutoff-error}.
	\end{proof}

	\section{Proof of Theorem \ref{thm:main}}

	\subsection{Representation of Pressure}
	
	Define
	\begin{equation}\label{eq:pressure0}
		P_0=\sum_{k,\ell=1}^n
		\mathcal R_k\mathcal R_\ell(u_ku_\ell-b_kb_\ell)
		=P_u+P_b.
	\end{equation}
	Lemma~\ref{lem:riesz} and Lorentz H\"older give
	\begin{equation}\label{eq:pressure-bounds}
		\|P_u\|_{L^{p/2,\infty}}\le C\|u\|_{L^{p,\infty}}^2,
		\qquad
		\|P_b\|_{L^{q/2,\infty}}\le C\|b\|_{L^{q,\infty}}^2.
	\end{equation}
	
	\begin{lemma}\label{lem:pressure}
		Under the assumptions of Theorem~\ref{thm:main},
		$P-P_0$ is constant.
	\end{lemma}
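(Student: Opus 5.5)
The plan is to take the divergence of the momentum equation and compare the resulting Poisson equation for $P$ with the one satisfied by $P_0$. Since $\diver u=\diver b=0$, we have $(u\cdot\nabla)u-(b\cdot\nabla)b=\partial_\ell(u_\ell u-b_\ell b)$. The dissipative term $\Lambda_i^{2\alpha}u$ (or $\Lambda^{2\alpha}u$) commutes with derivatives in the distributional sense, so its divergence is $\Lambda_i^{2\alpha}\diver u=0$. To check this, pair with $\partial_k\phi$ for $\phi\in C_c^\infty$ and use that $\Lambda_i^{2\alpha}\partial_k\phi=\partial_k\Lambda_i^{2\alpha}\phi$, which lies in $L^{a,1}$ for all $a>1$ by the tail bounds recorded after the equivalence of definitions. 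Hence, in $\Sch'$,
\[
-\Delta P=\sum_{k,\ell}\partial_k\partial_\ell(u_ku_\ell-b_kb_\ell).
\]
The Riesz transform identity $\mathcal R_k\mathcal R_\ell=-\partial_k\partial_\ell(-\Delta)^{-1}$ in turn gives $-\Delta P_0=\sum_{k,\ell}\partial_k\partial_\ell(u_ku_\ell-b_kb_\ell)$. Here the sign convention of Lemma~\ref{lem:riesz} gives exactly this: $\widehat{\mathcal R_k\mathcal R_\ell f}=-\xi_k\xi_\ell|\xi|^{-2}\hat f$. This identity is justified on the tempered distributions $u_ku_\ell\in L^{p/2,\infty}$ and $b_kb_\ell\in L^{q/2,\infty}$, with $p/2,q/2>1$, by approximating with Schwartz functions and using Lemma~\ref{lem:riesz} together with the duality of $L^{s,\infty}$ and $L^{s',1}$.

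Therefore $h:=P-P_0$ is harmonic in $\Sch'$, hence a harmonic polynomial. This requires $P$ to be tempered, which is not given a priori. To avoid assuming it, I work with $\nabla P$ instead. From the momentum equation,
\[
\nabla P=-\Lambda_i^{2\alpha}u-\partial_\ell(u_\ell u-b_\ell b),
\]
and every term on the right is a tempered distribution, being a derivative of an $L^{p/2,\infty}+L^{q/2,\infty}$ function or a fractional image of an $L^{p,\infty}$ function defined by duality. Consequently $\nabla h=\nabla P-\nabla P_0$ is tempered, and each component is harmonic. Indeed, $\diver\nabla h=\Delta h=0$ follows from the previous paragraph applied at the level of $\nabla P$: the divergence of the displayed expression for $\nabla P$ equals $-\Delta P_0$. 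By Liouville's theorem in $\Sch'$ (the Fourier transform is supported at the origin), $\nabla h$ is a polynomial.

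The remaining step, which I expect to be the main one, is to show that this polynomial vanishes. Since $\nabla P_0$ is the gradient of an $L^{p/2,\infty}+L^{q/2,\infty}$ function, it is natural to test against rescaled test functions. Take $\phi_R(x)=R^{-n}\phi(x/R)$ with $\int\phi=1$. Then
\[
\langle\nabla h,\phi_R\rangle=\langle\nabla P,\phi_R\rangle+\langle P_0,\nabla\phi_R\rangle .
\]
The term $\langle P_0,\nabla\phi_R\rangle$ is bounded by $\|P_0\|_{L^{s,\infty}}\|\nabla\phi_R\|_{L^{s',1}}\lesssim R^{-1-n/s}\to0$. For the $\nabla P$ term, use the equation. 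The nonlinear terms give $\langle u_\ell u-b_\ell b,\nabla\phi_R\rangle\to0$ by the same scaling. The dissipative term gives $\langle u,\Lambda_i^{2\alpha}\phi_R\rangle$, bounded by $\|u\|_{L^{p,\infty}}\|\Lambda_i^{2\alpha}\phi_R\|_{L^{p',1}}$, and scaling gives a factor $R^{-2\alpha-n/p}\to0$.

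On the other hand, if $\nabla h$ is a nonzero polynomial of degree $m$, then $\langle\nabla h,\phi_R\rangle$ behaves like $R^m$ times a moment of $\phi$ that can be chosen nonzero. For degree $0$ it is exactly the constant value. For higher degree, replace $\phi$ by $\phi$ times a suitable monomial to isolate the top coefficient. This forces $\nabla h=0$, so $P-P_0$ is constant.

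The subtle point is rigor in the duality: $\Lambda_i^{2\alpha}\partial_k\phi$ must be justified as belonging to $L^{a,1}$ uniformly under scaling. For $\alpha=1$ this is trivial.
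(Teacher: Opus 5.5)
Your proposal is correct and follows the same route as the paper. You show $h=P-P_0$ is harmonic, use the momentum equation to see that $\nabla h$ is a tempered harmonic distribution and hence a polynomial, and then rule out a nonzero polynomial by testing against $\phi_R=R^{-n}\phi(x/R)$: the dissipative, nonlinear and $P_0$ pairings decay like $R^{-2\alpha-n/p}$, $R^{-1-2n/p}$ and $R^{-1-2n/q}$, whereas a nonzero polynomial gives $\sum_k R^k\int Q_k\phi$ with a nonvanishing top coefficient. Your extra justifications of $\diver\Lambda_i^{2\alpha}u=0$ by duality and of the Riesz-transform identity on $L^{s,\infty}$ fill in details the paper leaves implicit.
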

	
	\begin{proof}
		We prove the directional case. Taking divergence of the
		momentum equation gives
		$$
		-\Delta P=\sum_{k,\ell}\partial_k\partial_\ell
		(u_ku_\ell-b_kb_\ell).
		$$
		The multiplier identity for the Riesz transforms gives
		the same equation for $P_0$. Thus $h=P-P_0$ is harmonic,
		and
		\begin{equation}\label{eq:grad-h}
			\nabla h=-\Lambda_i^{2\alpha}u
			-\diver(u\otimes u-b\otimes b)-\nabla P_0.
		\end{equation}
		The right-hand side is tempered. Hence each component
		of $\nabla h$ is a tempered harmonic distribution and
		therefore a polynomial.
		
		Fix $\phi\in C_c^\infty(\R^n)$ and set
		$\phi_R(x)=R^{-n}\phi(x/R)$. Scaling gives
		$$
		\|\Lambda_i^{2\alpha}\phi_R\|_{L^{p',1}}
		=R^{-2\alpha-n/p}
		\|\Lambda_i^{2\alpha}\phi\|_{L^{p',1}},
		$$
		for $0<\alpha<1$; when $\alpha=1$, $\Lambda_i^2=-\partial_i^2$
		and
		$\|\partial_i^2\phi_R\|_{L^{p',1}}=R^{-2-n/p}\|\partial_i^2\phi\|_{L^{p',1}}$.
		In both cases the exponent is strictly negative, so
		$$
		\vert{}\pair{\Lambda_i^{2\alpha}u}{\phi_R}\vert{}
		\le C_\phi\|u\|_{L^{p,\infty}}R^{-2\alpha-n/p}
		\longrightarrow0.
		$$
		For the tensor and pressure terms, Lorentz H\"older gives
		$$
		\begin{aligned}
			\vert{}\pair{\diver(u\otimes u)}{\phi_R}\vert{}
			+\vert{}\pair{\nabla P_u}{\phi_R}\vert{}
			&\le C_\phi R^{-1-2n/p}\|u\|_{L^{p,\infty}}^2,\\
			\vert{}\pair{\diver(b\otimes b)}{\phi_R}\vert{}
			+\vert{}\pair{\nabla P_b}{\phi_R}\vert{}
			&\le C_\phi R^{-1-2n/q}\|b\|_{L^{q,\infty}}^2.
		\end{aligned}
		$$
		Here we used
		$\|\nabla\phi_R\|_{L^{(p/2)',1}}
		=R^{-1-2n/p}\|\nabla\phi\|_{L^{(p/2)',1}}$,
		and similarly for $q$.
		Consequently, 
		\begin{equation}\label{eq:grad-h-vanish}
			\pair{\nabla h}{\phi_R}\to0 \quad\text{for every } \phi \in C_c^\infty(\R^n).
		\end{equation}	
		It remains to show that $\nabla h$ vanishes. Suppose, for
		contradiction, that one component of $\nabla h$ is a nonzero
		polynomial. Denote this component by $Q$ and let $m\ge0$ be its
		degree. Write
		\[
		Q=Q_m+Q_{m-1}+\cdots+Q_0,
		\]
		where each $Q_k$ is homogeneous of degree $k$ and $Q_m\not\equiv0$.
		
		Since $Q_m$ is not identically zero, one may choose $\varphi\in C_c^\infty(\R^n)$ supported in a region where $Q_m$ does not vanish and with the same sign as $Q_m$ there, such that
		\[
		\int_{\R^n}Q_m(y)\varphi(y)\,\dd y\neq0.
		\]
		Let $\phi\in C_c^\infty(\R^n)$ have $\varphi$ in the
		corresponding component and all other components equal to zero.
		Then
		\[
		\pair{\nabla h}{\phi_R}
		=
		\int_{\R^n}Q(x)R^{-n}\varphi(x/R)\,\dd x=
		\int_{\R^n}Q(Ry)\varphi(y)\,\dd y=
		\sum_{k=0}^m
		R^k\int_{\R^n}Q_k(y)\varphi(y)\,\dd y.
		\]
		If $m\ge1$, the coefficient of the highest power $R^m$ is
		nonzero, so this expression cannot converge to zero as
		$R\to\infty$. If $m=0$, then $Q$ is a nonzero constant and
		\[
		\pair{\nabla h}{\phi_R}
		=
		Q_0\int_{\R^n}\varphi(y)\,\dd y,
		\]
		which is again nonzero for a suitable choice of $\varphi$.
		
		This contradicts \eqref{eq:grad-h-vanish}.
		Therefore every component of $\nabla h$ is the zero polynomial,
		and hence $\nabla h=0$.
		Thus $h$ is constant, so $P=P_0+C$
		for some constant $C$.
		
		For the full operator, replace $\Lambda_i^{2\alpha}$
		by $\Lambda^{2\alpha}$. Its test-function images have
		the same dilation exponent and belong to $L^{p',1}$.
		All other steps are unchanged.
	\end{proof}
	
	\subsection{Proof of Theorem \ref{thm:main}:directional dissipation}
	\subsubsection{ fractional case
		$0<\alpha,\beta<1$}
	\label{sec:proof-directional-fractional}

	Test \eqref{eq:system1}$_1$ by $\eta_R^2u$ and \eqref{eq:system2}$_2$ by $\eta_R^2b$, and add them together. Using
	$\diver u=\diver b=0$ and
	Lemma~\ref{lem:pressure}, we obtain
	\begin{align}\label{eq:energy-identity}
			&\pair{\Lambda_i^{2\alpha}u}{\eta_R^2u}
			+\pair{\Lambda_j^{2\beta}b}{\eta_R^2b}\nonumber\\
			=&\frac12\int_{A_R}|u|^2u\cdot\nabla(\eta_R^2)\,\dd x
			+\frac12\int_{A_R}|b|^2u\cdot\nabla(\eta_R^2)\,\dd x+\int_{A_R}P_0u\cdot\nabla(\eta_R^2)\,\dd x
			-\int_{A_R}(u\cdot b)b\cdot\nabla(\eta_R^2)\,\dd x\nonumber\\
			=:&J_1+J_2+J_3+J_4.
	\end{align}
	Since $3<p<3n/(n-1)$, Lemma~\ref{lem:holder} gives
	\begin{equation}\label{eq:u-flux}
		|J_1|\le CR^{n-1-3n/p}
		\|u\|_{L^{p,\infty}(A_R)}^3\longrightarrow0.
	\end{equation}
	Since $1/p+2/q>(n-1)/n$,
	\begin{equation}\label{eq:mixed-flux}
		|J_2|+|J_4|
		\le CR^{n-1-n(1/p+2/q)}
		\|u\|_{L^{p,\infty}(A_R)}
		\|b\|_{L^{q,\infty}(A_R)}^2
		\longrightarrow0.
	\end{equation}
	For the pressure term, use \eqref{eq:pressure-bounds}
	and the global pressure norms:
	\begin{align}
		\frac1R\int_{A_R}|P_u||u|\,\dd x
		&\le CR^{n-1-3n/p}
		\|u\|_{L^{p,\infty}(\R^n)}^2
		\|u\|_{L^{p,\infty}(A_R)}
		\longrightarrow0,
		\label{eq:Pu-flux}\\
		\frac1R\int_{A_R}|P_b||u|\,\dd x
		&\le CR^{n-1-n(1/p+2/q)}
		\|b\|_{L^{q,\infty}(\R^n)}^2
		\|u\|_{L^{p,\infty}(A_R)}
		\longrightarrow0.
		\label{eq:PB-flux}
	\end{align}
	Combibing the estimates \eqref{eq:u-flux}--\eqref{eq:PB-flux} and letting $R\to\infty$ in \eqref{eq:energy-identity}, we obtain
	\begin{equation}\label{eq:energy-pairing-zero}
		\pair{\Lambda_i^{2\alpha}u}{\eta_R^2u}
		+\pair{\Lambda_j^{2\beta}b}{\eta_R^2b}
		\longrightarrow0.
	\end{equation}
	Apply Lemmas~\ref{lem:localized} and Lemma \ref{lem:cutoff-error}
	to the two fields. Their cutoff errors tend to zero, and
	their localized energies are nonnegative. Therefore
	\begin{equation}\label{eq:localized-dissipation-zero}
		\|\Lambda_i^\alpha(\eta_Ru)\|_2^2
		+\|\Lambda_j^\beta(\eta_Rb)\|_2^2
		\longrightarrow0.
	\end{equation}
	Let $\Phi\in C_c^\infty(\R_{\xi_i}\times\R^{n-1}_{x'})$
	be supported where $\vert{}\xi_i\vert{}\ge\delta>0$.
	Choose $\psi\in\Sch(\R^n)$ whose partial Fourier
	transform satisfies
	$$
	\mathcal F_{x_i}\psi(\xi_i,x')
	=\vert{}\xi_i\vert{}^{-\alpha}\Phi(\xi_i,x').
	$$
	Then $\mathcal F_{x_i}(\Lambda_i^\alpha\psi)=\Phi$.
	Since $\eta_Ru\to u$ in $\Sch'$,
	self-adjointness and \eqref{eq:localized-dissipation-zero}
	give
	$$
	\vert{}\pair{\eta_R u}{\Lambda_i^\alpha\psi}\vert{}
	=\vert{}\pair{\Lambda_i^\alpha (\eta_R u)}{\psi}\vert{}
	\le\|\Lambda_i^\alpha (\eta_R u)\|_2\|\psi\|_2
	\longrightarrow0.
	$$
	Hence $\langle u,\Lambda_i^\alpha\psi\rangle=0$. By the definition of $\psi$ and Plancherel identity in $x_1$-variable, we have
	\[
	0=\langle u,\Lambda_i^\alpha\psi\rangle
	=
	\int_{\R^{n-1}}\int_{\R}
	\mathcal F_{x_i}u(\xi_i,x')
	\overline{\Phi(\xi_i,x')}\,d\xi_i\,dx'.
	\]
	This holds for every $\Phi\in C_c^\infty(\R_{\xi_i}\times\R^{n-1}_{x'})$ supported where $|\xi_i|\ge \delta>0$.
	Thus $\mathcal F_{x_i}u$ annihilates every test function supported in the region $|\xi_i|\ge \delta$. Hence
	$\mathcal F_{x_i}u=0$ on $\{\,|\xi_i|\ge \delta\,\}.$
	Since $\delta>0$ is arbitrary, $\mathcal F_{x_i}u$ cannot have support at any point with $\xi_i\neq 0$. Therefore
	\begin{equation}\label{eq:u-support}
		\supp \mathcal F_{x_i}u
		\subset \{\xi_i=0\}.
	\end{equation}
	Applying the same argument to $b$, with $j,\beta$
	in place of $i,\alpha$, gives
	\begin{equation}\label{eq:b-support}
		\supp\mathcal F_{x_j}b
		\subset\{\xi_j=0\}.
	\end{equation}
	Fix a component $u_k$ and
	$\theta\in C_c^\infty(\R^{n-1})$. Define
	$$
	g_\theta(t)=\int_{\R^{n-1}}u_k(t,x')\theta(x')\,\dd x',
	\qquad t=x_i.
	$$
	Let $\lambda>0$. Define $E_\lambda:=\{t\in\R: |g_\theta(t)|>\lambda\}$ and
	\[
	E_{\lambda,m}:=E_\lambda\cap(-m,m)
	=\{t\in(-m,m): |g_\theta(t)|>\lambda\}.
	\]
	Applying Lemma \ref{lem:holder}, we have
	\[
	\int_{E_{\lambda,m}} |g_\theta(t)|\dd t
	\le C\|u\|_{L^{p,\infty}(\R^n)}
	\|\theta\|_{L^{p',1}(\R^{n-1})}
	|E_{\lambda,m}|^{1/p'}.
	\]
	Since $|g_\theta(t)|>\lambda$ on $E_{\lambda,m}$, we also have
	\[
	\int_{E_{\lambda,m}} |g_\theta(t)|\dd t
	\ge \lambda |E_{\lambda,m}|.
	\]
	Hence
	\[
	\lambda |E_{\lambda,m}|
	\le C\|u\|_{L^{p,\infty}(\R^n)}
	\|\theta\|_{L^{p',1}(\R^{n-1})}
	|E_{\lambda,m}|^{1/p'}.
	\]
	Therefore,
	\[
	\lambda |E_{\lambda,m}|^{1/p}
	\le C\|u\|_{L^{p,\infty}(\R^n)}
	\|\theta\|_{L^{p',1}(\R^{n-1})}.
	\]
	Equivalently,
	\[
	|E_{\lambda,m}|
	\le
	\left(
	\frac{C\|u\|_{L^{p,\infty}(\R^n)}
		\|\theta\|_{L^{p',1}(\R^{n-1})}}{\lambda}
	\right)^p.
	\]
	The right-hand side is independent of $m$. Since
	\[
	E_{\lambda,m}\uparrow E_\lambda
	\quad\text{as } m\to\infty,
	\]
	monotone convergence gives
	\[
	|E_\lambda|
	\le
	\left(
	\frac{C\|u\|_{L^{p,\infty}(\R^n)}
		\|\theta\|_{L^{p',1}(\R^{n-1})}}{\lambda}
	\right)^p
	<\infty.
	\]
	Equivalently,
	\[
	\|g_\theta\|_{L^{p,\infty}(\R)}
	\le C\|u\|_{L^{p,\infty}(\R^n)}
	\|\theta\|_{L^{p',1}(\R^{n-1})}.
	\]
	Since $\widehat{g_\theta}(\xi_i)
	=
	\int_{\mathbb{R}^{n-1}}
	\mathcal{F}_{x_i} u_k(\xi_i, x') \theta(x') \, dx'$, by \eqref{eq:u-support},
	$\supp\widehat g_\theta\subset\{0\}$. Since $g_\theta\in L^{p,\infty}(\R^n)$ is tempered, $g_\theta$
	is a polynomial. A nonzero polynomial cannot belong to
	$L^{p,\infty}(\R)$ for finite $p$, so $g_\theta=0$.
	As $\theta$ and $k$ are arbitrary, $u=0$.
	The same argument, using \eqref{eq:b-support} and
	$q<\infty$, gives $b=0$.
	The momentum equation then gives $\nabla P=0$.
	
	\subsubsection{classical endpoints}
	\label{sec:proof-directional-endpoint}
	
	We now prove the three remaining cases:
	$$
	0<\alpha<1,\ \beta=1;
	\qquad
	\alpha=1,\ 0<\beta<1;
	\qquad
	\alpha=\beta=1.
	$$	
	For the classical directional operator, Lemma~\ref{lem:classical-localized}
	gives
	\begin{equation}\label{eq:classical-directional-energy}
		\pair{-\partial_j^2f}{\eta_R^2f}
		=
		\|\partial_j(\eta_Rf)\|_2^2
		-\int_{\R^n}|\partial_j\eta_R|^2|f|^2\,\dd x.
	\end{equation}
	If $f\in L^{r,\infty}$ and $2<r<2n/(n-2)$,
	then \eqref{eq:classical-cutoff-error} implies
	\begin{equation}\label{eq:classical-error-limit}
		\int_{\R^n}|\partial_j\eta_R|^2|f|^2\,\dd x
		\longrightarrow0.
	\end{equation}
	
	\noindent{\bf Case 1:  $0<\alpha<1$ and $\beta=1$}
	
	Assume now $0<\alpha<1$ and $\beta=1$.
	The exponent assumptions in Theorem~\ref{thm:main} imply
	$$
	p<\frac{2n}{n-2\alpha},
	\qquad
	q<\frac{2n}{n-2}.
	$$
	The estimates \eqref{eq:u-flux}--\eqref{eq:PB-flux} give
	$$
	\pair{\Lambda_i^{2\alpha}u}{\eta_R^2u}
	+
	\pair{-\partial_j^2b}{\eta_R^2b}
	\longrightarrow0.
	$$
	For the first term, Lemmas~\ref{lem:localized} and
	\ref{lem:cutoff-error} imply
	$$
	\|\Lambda_i^\alpha(\eta_Ru)\|_2^2
	\le
	\pair{\Lambda_i^{2\alpha}u}{\eta_R^2u}+o(1).
	$$
	For the second term, \eqref{eq:classical-directional-energy} and
	\eqref{eq:classical-error-limit} imply
	$$
	\|\partial_j(\eta_Rb)\|_2^2
	\le
	\pair{-\partial_j^2b}{\eta_R^2b}+o(1).
	$$
	Therefore,
	\begin{equation}\label{eq:mixed-alpha-fractional-beta-classical}
		\|\Lambda_i^\alpha(\eta_Ru)\|_2^2
		+\|\partial_j(\eta_Rb)\|_2^2
		\longrightarrow0.
	\end{equation}
	
	The first term in \eqref{eq:mixed-alpha-fractional-beta-classical}
	implies
	$$
	\supp\mathcal F_{x_i}u\subset\{\xi_i=0\}.
	$$
	For the second term, take any $\Psi\in C_c^\infty$ supported where $\vert{}\xi_j\vert{}\ge\delta>0$.
	Choose $\psi\in\mathcal S$ such that $\Psi = i\xi_j \widehat{\psi}$. Then
	$$
	\left\vert{}\pair{\eta_Rb}{\Psi}\right\vert{}
	=
	\left\vert{}\pair{\partial_j(\eta_Rb)}{\psi}\right\vert{}
	\le
	\|\partial_j(\eta_Rb)\|_2\|\psi\|_2
	\longrightarrow0,
	$$
	which gives
	$$
	\supp\mathcal F_{x_j}b\subset\{\xi_j=0\}.
	$$
	The weak-Lebesgue argument used above now gives $u=b=0$.
	Finally, the momentum equation yields $\nabla P=0$.
	
	\noindent{\bf Case 2: $\alpha=1$ and $0<\beta<1$}
	
	Assume $\alpha=1$ and $0<\beta<1$.
	The exponent assumptions imply
	\[
	p<\frac{2n}{n-2},
	\qquad
	q<\frac{2n}{n-2\beta}.
	\]
	Use \eqref{eq:classical-directional-energy} and
	\eqref{eq:classical-error-limit} for $u$, and
	Lemmas~\ref{lem:localized} and \ref{lem:cutoff-error} for $b$.
	The  estimates then give
	\[
	\|\partial_i(\eta_Ru)\|_2^2
	+
	\|\Lambda_j^\beta(\eta_Rb)\|_2^2
	\longrightarrow0.
	\]
	Testing against Fourier-localized functions supported away from
	$\{\xi_i=0\}$ and $\{\xi_j=0\}$ gives
	\[
	\supp\mathcal F_{x_i}u\subset\{\xi_i=0\},
	\qquad
	\supp\mathcal F_{x_j}b\subset\{\xi_j=0\}.
	\]
	The weak-Lebesgue argument used above implies $u=b=0$.
	Consequently, $\nabla P=0$.
	
	\noindent{\bf Case 3: $\alpha=\beta=1$}
	
	Assume $\alpha=\beta=1$. The exponent assumptions give
	\[
	p<\frac{2n}{n-2},
	\qquad
	q<\frac{2n}{n-2}.
	\]
	Using the classical localized identity for both fields and the
	estimates involving $\nabla \eta_R$ yields
	\[
	\|\partial_i(\eta_Ru)\|_2^2
	+
	\|\partial_j(\eta_Rb)\|_2^2
	\longrightarrow0.
	\]
	Testing against Fourier-localized functions away from the
	respective coordinate hyperplanes gives
	\[
	\supp\mathcal F_{x_i}u\subset\{\xi_i=0\},
	\qquad
	\supp\mathcal F_{x_j}b\subset\{\xi_j=0\}.
	\]
	Finite weak-Lebesgue integrability excludes nonzero polynomial
	dependence in the remaining coordinate variables, so $u=b=0$.
	The momentum equation gives $\nabla P=0$.
	
	This completes the directional part of
	Theorem~\ref{thm:main} for all $0<\alpha,\beta\le1$.
	
	\subsection{Proof of Theorem~\ref{thm:main}:
		full dissipation}
	\label{sec:proof-full}
	
	The pressure normalization and the flux estimates
	\eqref{eq:u-flux}--\eqref{eq:PB-flux} apply to the full-dissipation
	system as well. In particular,
	\[
	\pair{\Lambda^{2\alpha}u}{\eta_R^2u}
	+
	\pair{\Lambda^{2\beta}b}{\eta_R^2b}
	\longrightarrow0.
	\]
	If $0<\alpha,\beta<1$, apply the full-operator parts of
	Lemmas~\ref{lem:localized} and \ref{lem:cutoff-error}. This gives
	\[
	\|\Lambda^\alpha(\eta_Ru)\|_2^2
	+
	\|\Lambda^\beta(\eta_Rb)\|_2^2
	\longrightarrow0.
	\]
	If $\alpha=1$, use the full classical identity in
	Lemma~\ref{lem:classical-localized} for $u$; if $\beta=1$, use
	that identity for $b$. The corresponding cutoff errors tend to
	zero by the full version of \eqref{eq:classical-cutoff-error}.
	Thus, in all four cases $0<\alpha,\beta\le1$,
	\[
	\|\Lambda^\alpha(\eta_Ru)\|_2^2
	+
	\|\Lambda^\beta(\eta_Rb)\|_2^2
	\longrightarrow0,
	\]
	where $\|\Lambda f\|_2=\|\nabla f\|_2$ at exponent $1$.
	
	For $\Phi\in C_c^\infty(\R^n\setminus\{0\};\R^n)$, choose
	$\psi\in\Sch(\R^n;\R^n)$ such that $\widehat\psi(\xi)=|\xi|^{-\alpha}\Phi(\xi)$.
	Then
	\[
	\left|\pair{\eta_Ru}{\Lambda^\alpha\psi}\right|
	=
	\left|\pair{\Lambda^\alpha(\eta_Ru)}{\psi}\right|
	\le
	\|\Lambda^\alpha(\eta_Ru)\|_2\|\psi\|_2
	\longrightarrow0.
	\]
	For $\alpha=1$, the same identity is interpreted using the
	ordinary gradient and the Fourier multiplier $|\xi|$.
	Since $\eta_Ru\to u$ in $\Sch'$, it follows that
	\[
	\supp\widehat u\subset\{0\}.
	\]
	The identical argument gives $\supp\widehat b\subset\{0\}$.
	Thus $u$ and $b$ are polynomial distributions. Since they belong
	to finite weak-Lebesgue spaces, both vanish. The momentum equation
	then gives $\nabla P=0$. This proves the full-dissipation case.
	
	\section{Proof of Theorem \ref{thm:stability}}
	\label{sec:stability-proof}
	\subsection{Perturbation stress and pressure}
	
	Define
	\begin{equation}\label{eq:stab-stress}
		F=u_0\otimes v+v\otimes u_0+v\otimes v
		-b_0\otimes w-w\otimes b_0-w\otimes w,
	\end{equation}
	with convention
	$(\diver F)_m=\sum_\ell\partial_\ell F_{m\ell}$.
	Every term on the right-hand side of \eqref{eq:stab-stress} contains a factor from $\{v,w\}$, and
	$v,w\in C_c^\infty(\R^n)$. Since $U$ and $B$ are smooth,
	it follows that $F\in C_c^\infty(\R^n)$
	regardless of the growth of $U$ and $B$.
	The momentum equation becomes
	\begin{equation}\label{eq:stab-conservative}
		\Lambda_1^{2\alpha}v+\diver F+\nabla P=0.
	\end{equation}
	Set
	$$
	Q=\sum_{m,\ell}\mathcal R_m\mathcal R_\ell F_{m\ell}.
	$$
	
	\begin{lemma}\label{lem:stab-pressure}
		For a compactly supported perturbation as in
		Theorem~\ref{thm:stability}, $P=Q+C$ for some constant $C$.
	\end{lemma}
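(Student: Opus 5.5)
We follow the pattern of Lemma~\ref{lem:pressure}, replacing the weak-Lebesgue bounds by compact support of $v$ and $F$.

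\emph{Step 1: $P-Q$ is harmonic.} Taking the divergence of \eqref{eq:stab-conservative}, and using $\diver v=0$ together with the fact that $\Lambda_1^{2\alpha}$ commutes with $\partial_m$, gives
\[
-\Delta P=\sum_{m,\ell}\partial_m\partial_\ell F_{m\ell}.
\]
Since $F\in C_c^\infty\subset\Sch$, the Riesz multiplier identity shows that $Q$ solves the same equation. Hence $h=P-Q$ is harmonic, and
\[
\nabla h=-\Lambda_1^{2\alpha}v-\diver F-\nabla Q .
\]
Each term on the right is tempered. Indeed, $\Lambda_1^{2\alpha}v$ is smooth and bounded with a tail of order $|x_1|^{-1-2\alpha}$ on a bounded transverse set. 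Moreover, $\nabla Q=\sum\mathcal R_m\mathcal R_\ell\nabla F_{m\ell}$ lies in every $L^a$, $1<a<\infty$, by Lemma~\ref{lem:riesz}. So each component of $\nabla h$ is a tempered harmonic distribution, and therefore a polynomial.

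\emph{Step 2: the dilated pairings vanish.} Fix $\phi\in C_c^\infty$ and set $\phi_R=R^{-n}\phi(\cdot/R)$. We bound the three terms as follows:
\[
|\langle\Lambda_1^{2\alpha}v,\phi_R\rangle|=|\langle v,\Lambda_1^{2\alpha}\phi_R\rangle|\le\|v\|_{L^1}\|\Lambda_1^{2\alpha}\phi_R\|_\infty\le CR^{-n-2\alpha},
\]
\[
|\langle\diver F,\phi_R\rangle|\le\|F\|_{L^1}\|\nabla\phi_R\|_\infty\le CR^{-n-1},
\]
\[
|\langle\nabla Q,\phi_R\rangle|\le\|Q\|_{L^2}\|\nabla\phi_R\|_{L^2}\le CR^{-1-n/2}.
\]
For the last bound we use $Q\in L^2$, which holds because $F\in L^2$ and the Riesz transforms are bounded on $L^2$. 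Hence $\langle\nabla h,\phi_R\rangle\to0$ for every $\phi\in C_c^\infty$.

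\emph{Step 3: conclusion.} The polynomial argument of Lemma~\ref{lem:pressure} now applies verbatim. A nonzero polynomial component $Q'=\sum_{k\le m}Q'_k$ of $\nabla h$ gives
\[
\langle Q',\phi_R\rangle=\sum_{k\le m}R^k\int Q'_k\varphi,
\]
and $\varphi$ can be chosen so that the top coefficient is nonzero, which contradicts Step~2. Hence $\nabla h=0$ and $P=Q+C$.

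The only point requiring care is Step~1: one must confirm that the growth of $U,B$ is harmless. It is, because every entry of $F$ contains a factor $v$ or $w$, so $F\in C_c^\infty$. The remaining estimates are routine.
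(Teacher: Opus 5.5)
Your proof is correct and follows the same route as the paper. Both arguments show that $P-Q$ is harmonic, that $\nabla(P-Q)$ is a tempered polynomial, that the dilated pairings $\langle\nabla(P-Q),\phi_R\rangle$ vanish, and then apply the polynomial argument of Lemma~\ref{lem:pressure}. The only difference is cosmetic: the paper bounds the $v$- and $F$-terms by $L^2$ duality, obtaining the rates $R^{-2\alpha-n/2}$ and $R^{-1-n/2}$, while your $L^1$--$L^\infty$ pairing gives the faster rates $R^{-n-2\alpha}$ and $R^{-n-1}$; either suffices.
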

	
	\begin{proof}
		We have $Q\in L^2$ and $H=P-Q$ harmonic.
		Equation~\eqref{eq:stab-conservative} shows that
		$\nabla H$ is tempered, hence polynomial.
		For $\phi_R=R^{-n}\phi(\cdot/R)$,
		$$
		\vert{}\pair{\Lambda_1^{2\alpha}v}{\phi_R}\vert{}
		\le R^{-2\alpha-n/2}\|v\|_2
		\|\Lambda_1^{2\alpha}\phi\|_2,
		$$
		and
		$$
		\vert{}\pair{\diver F+\nabla Q}{\phi_R}\vert{}
		\le C_\phi R^{-1-n/2}(\|F\|_2+\|Q\|_2).
		$$
		Thus $\pair{\nabla H}{\phi_R}\to0$.
		The polynomial argument in Lemma~\ref{lem:pressure}
		gives $\nabla H=0$.
	\end{proof}
	
	\subsection{Perturbation energy identity}
	
	Define
	\begin{align}
		D(v,w)&=\|\Lambda_1^\alpha v\|_2^2
		+\|\Lambda_1^\beta w\|_2^2,
		\label{eq:stab-dissipation}\\
		I(v,w)&=\int_{\R^n}
		\bigl[v_1v\cdot\nabla U-w_1w\cdot\nabla U
		+w_1v\cdot\nabla B-v_1w\cdot\nabla B\bigr]\,\dd x.
		\label{eq:stab-interaction}
	\end{align}
	The transverse gradients are embedded in $\R^n$
	with first component zero.
	
	\begin{proposition}\label{prop:stab-energy}
		Every smooth compactly supported solution of
		\eqref{eq:perturbation} satisfies
		\begin{equation}\label{eq:stab-energy}
			D(v,w)+I(v,w)=0.
		\end{equation}
	\end{proposition}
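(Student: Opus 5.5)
We test the momentum equation of \eqref{eq:perturbation} against $v$ and the induction equation against $w$, integrate over $\R^n$ and add. Because $v,w\in C_c^\infty(\R^n)$, every integral is absolutely convergent. No cutoff is needed, and integration by parts produces no boundary terms. This holds even though $U,B$ may grow, since every integrand contains a factor of $v$ or $w$.

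\emph{Dissipation.} For $0<\alpha,\beta<1$ and compactly supported smooth fields, Plancherel in the $x_1$ variable gives
\[
\int v\cdot\Lambda_1^{2\alpha}v=\|\Lambda_1^\alpha v\|_2^2,
\qquad
\int w\cdot\Lambda_1^{2\beta}w=\|\Lambda_1^\beta w\|_2^2 .
\]
This is also the case $\eta\equiv1$ of Lemma~\ref{lem:localized}, where the kernel term vanishes. Together these produce $D(v,w)$.

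\emph{Pressure and self-transport terms.} By Lemma~\ref{lem:stab-pressure}, or directly since $\diver v=0$, we have $\int v\cdot\nabla P=0$. Since $\diver u_0=\diver v=0$ (recall that $u_0=U(x')e_1$ is independent of $x_1$), the terms $\int v\cdot(u_0\cdot\nabla)v$ and $\int v\cdot(v\cdot\nabla)v$ vanish. The same holds for $\int w\cdot(u_0\cdot\nabla)w$ and $\int w\cdot(v\cdot\nabla)w$.

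\emph{Magnetic cross terms.} These cancel in pairs. Since $\diver b_0=0$,
\[
\int v\cdot(b_0\cdot\nabla)w+\int w\cdot(b_0\cdot\nabla)v=\int b_0\cdot\nabla(v\cdot w)=0 .
\]
Since $\diver w=0$,
\[
\int v\cdot(w\cdot\nabla)w+\int w\cdot(w\cdot\nabla)v=0 .
\]

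\emph{Background-gradient terms.} The remaining terms involve derivatives of the background. Since $u_0=Ue_1$, we have $(v\cdot\nabla)u_0=(v\cdot\nabla U)e_1$, so
\[
\int v\cdot(v\cdot\nabla)u_0=\int v_1\,v\cdot\nabla U .
\]
Similarly,
\begin{align*}
-\int v\cdot(w\cdot\nabla)b_0&=-\int v_1\,w\cdot\nabla B,\\
\int w\cdot(v\cdot\nabla)b_0&=\int w_1\,v\cdot\nabla B,\\
-\int w\cdot(w\cdot\nabla)u_0&=-\int w_1\,w\cdot\nabla U .
\end{align*}
Here $\nabla U$ and $\nabla B$ are embedded with first component zero. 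Collecting these four terms with their signs reproduces exactly $I(v,w)$ in \eqref{eq:stab-interaction}, which yields \eqref{eq:stab-energy}.

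\emph{Main obstacle.} The only delicate point is this sign bookkeeping. We must check that the $u_0$-gradient terms enter as $v_1 v\cdot\nabla U - w_1 w\cdot\nabla U$ and the $b_0$-gradient terms as $w_1 v\cdot\nabla B - v_1 w\cdot\nabla B$. We must also confirm that the sides of \eqref{eq:perturbation} on which $(w\cdot\nabla)b_0$ and $(w\cdot\nabla)u_0$ appear give precisely these signs. Everything else is a standard divergence-free cancellation.
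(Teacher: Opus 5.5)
Your proof is correct and follows the paper's argument. You test with $v$ and $w$, use Plancherel for $D(v,w)$, remove the pressure and self-transport terms via $\diver v=\diver u_0=\diver w=0$, cancel the $b_0$- and $w$-cross terms in pairs, and read off $I(v,w)$ from $u_0=Ue_1$, $b_0=Be_1$; your four background-gradient signs check out against \eqref{eq:perturbation}, so the ``main obstacle'' is already settled by your own computation.
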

	
	\begin{proof}
		Test the two equations by $v,w$ and add.
		Plancherel gives the two terms in $D$.
		The pressure vanishes by $\diver v=0$.
		For divergence-free $a$ and compactly supported $f$,
		$$
		\int(a\cdot\nabla)f\cdot f\,\dd x=0.
		$$
		This removes the transport terms involving $u_0$
		and the corresponding nonlinear velocity terms.
		The paired magnetic transport terms cancel because
		\[
		\int_{\R^n}
		\bigl[(a\cdot\nabla)w\cdot v
		+(a\cdot\nabla)v\cdot w\bigr]\,\dd x
		=
		\int_{\R^n}a\cdot\nabla(v\cdot w)\,\dd x=0
		\]
		whenever $a$ is divergence-free and the fields are compactly
		supported. This applies with $a=b_0$ and $a=w$.
		
		The remaining background-gradient terms are
		\[
		\int_{\R^n}
		\bigl[
		(v\cdot\nabla)u_0\cdot v
		-(w\cdot\nabla)u_0\cdot w
		+(v\cdot\nabla)b_0\cdot w
		-(w\cdot\nabla)b_0\cdot v
		\bigr]\,\dd x.
		\]
		Because
		\[
		u_0=U(x')e_1,\qquad b_0=B(x')e_1,
		\]
		we have
		\[
		(v\cdot\nabla)u_0
		=(v\cdot\nabla' U)e_1,
		\qquad
		(w\cdot\nabla)b_0
		=(w\cdot\nabla' B)e_1.
		\]
		Using $v\cdot\nabla' U=v_1\,v\cdot\nabla U$ in the
		energy pairing and the analogous identity for $B$, the preceding
		expression is exactly $I(v,w)$. Hence
		\[
		D(v,w)+I(v,w)=0.
		\]
	\end{proof}
	
	\subsection{Proof of Theorem~\ref{thm:stability}}
	
	We first prove that $w=0$. The second equation in
	\eqref{eq:perturbation} can be written as
	\[
	\Lambda_1^{2\beta}w=G,
	\]
	where
	\[ 	G=-(u_0\cdot\nabla)w-(v\cdot\nabla)b_0
	-(v\cdot\nabla)w +(b_0\cdot\nabla)v+(w\cdot\nabla)u_0
	+(w\cdot\nabla)v.
	\]
	Every term in $G$ is smooth and compactly supported in
	$\R^n$. 
	Fix $x'\in\R^{n-1}$. Then
	\[
	w(\,\cdot\,,x')\in C_c^\infty(\R),
	\qquad
	\Lambda_1^{2\beta}w(\,\cdot\,,x')
	=G(\,\cdot\,,x')\in C_c^\infty(\R).
	\]
	Applying Lemma~\ref{lem:compact-support} componentwisely, we have 
	\[
	w(\,\cdot\,,x')=0.
	\]
	Since $x'$ is arbitrary, $w=0$ in $\R^n$.
	
	With $w=0$, define
	\[
	F=u_0\otimes v+v\otimes u_0+v\otimes v.
	\]
	The first equation becomes
	\begin{equation}\label{eq:compact-reduced-velocity}
		\Lambda_1^{2\alpha}v+\diver F+\nabla P=0.
	\end{equation}
	Here $F\in C_c^\infty(\R^n)$. Let
	\[
	\omega_{k\ell}
	=\partial_kv_\ell-\partial_\ell v_k.
	\]
	Applying $\partial_k$ to the $\ell$-th component of
	\eqref{eq:compact-reduced-velocity}, applying $\partial_\ell$ to
	the $k$-th component, and subtracting, gives
	\[
	\Lambda_1^{2\alpha}\omega_{k\ell}
	=
	-\partial_k(\diver F)_\ell
	+\partial_\ell(\diver F)_k.
	\]
	The right-hand side is in $C_c^\infty(\R^n)$. Therefore, for
	each fixed $x'$, both
	\[
	\omega_{k\ell}(\,\cdot\,,x')
	\quad\text{and}\quad
	\Lambda_1^{2\alpha}\omega_{k\ell}(\,\cdot\,,x')
	\]
	are compactly supported smooth functions of $x_1$. Lemma
	\ref{lem:compact-support} implies
	\[
	\omega_{k\ell}=0
	\qquad\text{for all }k,\ell.
	\]
	Using $\diver v=0$, we have
	\[
	\Delta v_\ell
	=
	\sum_{k=1}^n\partial_k
	\bigl(\partial_kv_\ell-\partial_\ell v_k\bigr)
	+\partial_\ell\diver v
	=0.
	\]
	Thus every component of $v$ is harmonic. Since $v$ is compactly
	supported, we immediately get $v=0$. Substitution into
	\eqref{eq:compact-reduced-velocity} yields $\nabla P=0$.
	This proves Theorem~\ref{thm:stability}.
	
	\section{Proof of Theorem~\ref{thm:intro-noncompact}}
	\label{sec:noncompact}
	We first give a Sobolev-Poincar\'e inequality for directional concentrated functions.
	\begin{lemma}
		\label{lem:directional-concentration}
		Let $s>0$, $0<\kappa<1$, and let $I\subset\R$ be an interval
		of length $\ell>0$. Suppose
		\[
		f\in L^2(\R^n),\qquad \Lambda_1^s f\in L^2(\R^n),
		\]
		and
		\begin{equation}\label{eq:assumption-concentration}
			\int_{\{x_1\in I\}}|f(x)|^2\,\dd x
			\ge \kappa\|f\|_2^2.
		\end{equation}
		Then
		\begin{equation}\label{eq:concentration-poincare}
			\|f\|_2^2
			\le C_{s,\kappa}\ell^{2s}\|\Lambda_1^sf\|_2^2,
			\qquad
			C_{s,\kappa}
			=
			\frac4\kappa
			\left(\frac4{\pi\kappa}\right)^{2s}.
		\end{equation}
		The assertion holds componentwise for vector-valued functions.
	\end{lemma}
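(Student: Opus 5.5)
The plan is a frequency splitting in the $x_1$ variable only, an uncertainty-principle argument: low $\xi_1$-frequencies cannot concentrate in a thin slab, and high frequencies are controlled by $\Lambda_1^s$. Write $\mathcal F_{x_1}$ for the partial Fourier transform in $x_1$, normalized as $\mathcal F_{x_1}f(\xi_1,x')=\int_\R e^{-ix_1\xi_1}f(x_1,x')\,\dd x_1$, so that Plancherel reads $\int_\R|\mathcal F_{x_1}f(\xi_1,x')|^2\dd\xi_1=2\pi\int_\R|f(x_1,x')|^2\dd x_1$ for a.e.\ $x'$. For a threshold $N>0$ to be chosen, set
\[
\mathcal F_{x_1}f_{\rm low}=\mathbf 1_{\{|\xi_1|<N\}}\mathcal F_{x_1}f,\qquad f_{\rm high}=f-f_{\rm low}.
\]
Both pieces are in $L^2(\R^n)$, with $\|f_{\rm low}\|_2,\|f_{\rm high}\|_2\le\|f\|_2$.

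\textbf{Step 1 (low frequencies spread out).} For a.e.\ $x'$, Fourier inversion and Cauchy--Schwarz over the band $|\xi_1|<N$ give
\[
|f_{\rm low}(x_1,x')|^2\le \frac{2N}{(2\pi)^2}\int_{|\xi_1|<N}|\mathcal F_{x_1}f(\xi_1,x')|^2\dd\xi_1\le \frac{N}{\pi}\int_\R|f(t,x')|^2\dd t,
\]
uniformly in $x_1$. Integrating over $x_1\in I$ and $x'\in\R^{n-1}$ (Tonelli) yields
\[
\int_{\{x_1\in I\}}|f_{\rm low}|^2\dd x\le \frac{\ell N}{\pi}\|f\|_2^2 .
\]

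\textbf{Step 2 (high frequencies are controlled by $\Lambda_1^s$).} Since $|\xi_1|^{2s}\ge N^{2s}$ on the support of $\mathcal F_{x_1}f_{\rm high}$, Plancherel in $x_1$ gives
\[
\|f_{\rm high}\|_2^2\le N^{-2s}\|\Lambda_1^sf\|_2^2 .
\]

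\textbf{Step 3 (combine and choose $N$).} By \eqref{eq:assumption-concentration}, the triangle inequality in $L^2(\{x_1\in I\})$, and $(a+b)^2\le 2a^2+2b^2$,
\[
\kappa\|f\|_2^2\le\int_{\{x_1\in I\}}|f|^2\dd x\le 2\int_{\{x_1\in I\}}|f_{\rm low}|^2\dd x+2\|f_{\rm high}\|_2^2\le \frac{2\ell N}{\pi}\|f\|_2^2+2N^{-2s}\|\Lambda_1^sf\|_2^2 .
\]
Choose $N=\pi\kappa/(4\ell)$, so that $2\ell N/\pi=\kappa/2$. Absorbing this term into the left side (legitimate because $\|f\|_2<\infty$) gives
\[
\frac\kappa2\|f\|_2^2\le 2\left(\frac{4\ell}{\pi\kappa}\right)^{2s}\|\Lambda_1^sf\|_2^2 .
\]
This is exactly \eqref{eq:concentration-poincare} with $C_{s,\kappa}=\frac4\kappa\left(\frac4{\pi\kappa}\right)^{2s}$. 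For vector-valued $f$, one can apply the same computation with $|\cdot|$ the Euclidean norm throughout; no step uses scalarity.

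\textbf{Main obstacle.} The only delicate point is justifying the slice-wise Fourier manipulations for a general $f\in L^2$ with $\Lambda_1^sf\in L^2$. For a.e.\ $x'$ the slice $f(\cdot,x')$ lies in $L^2(\R)$. Its band-limited part is then a continuous function given by the absolutely convergent inversion integral, since $\mathbf 1_{\{|\xi_1|<N\}}\mathcal F_{x_1}f(\cdot,x')\in L^1$. The identity $\|\Lambda_1^sf\|_2^2=(2\pi)^{-1}\iint|\xi_1|^{2s}|\mathcal F_{x_1}f|^2$ holds by Plancherel in $x_1$ and Fubini. With these facts recorded, Steps 1--3 are the whole proof; tracking the factor $2\pi$ carefully is what produces the stated constant.
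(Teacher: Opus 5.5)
Your proposal is correct and follows essentially the same route as the paper: a sharp cutoff at $|\xi_1|\le N$ in the $x_1$-frequency, a Cauchy--Schwarz bound $\|\mathbf 1_I f_{\rm low}\|_2^2\le (\ell N/\pi)\|f\|_2^2$ for the low part, the bound $\|f_{\rm high}\|_2\le N^{-s}\|\Lambda_1^sf\|_2$ for the high part, and the choice $N=\pi\kappa/(4\ell)$. The only difference is cosmetic: you combine the pieces with $(a+b)^2\le 2a^2+2b^2$, whereas the paper applies the triangle inequality to unsquared norms, and both yield the same constant $C_{s,\kappa}$.
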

	
	\begin{proof}
		Let $P_N$ be the orthogonal projection defined by
		\[
		\widehat{P_N f}(\xi_1,x')
		=
		{\bf1}_{\{|\xi_1|\le N\}}\widehat f(\xi_1,x').
		\]
		Equivalently, 
		\[
		(P_Nf)(x_1,x')
		=
		\frac{1}{\sqrt{2\pi}}
		\int_{|\xi_1|\le N}
		\widehat f(\xi_1,x')e^{ix_1\xi_1}\dd\xi_1.
		\]
		By Cauchy--Schwarz,
		\[
		|(P_Nf)(x_1,x')|^2
		\le
		\frac{1}{2\pi}
		\left(\int_{|\xi_1|\le N}1\dd\xi_1\right)
		\left(\int_{|\xi_1|\le N}
		|\widehat f(\xi_1,x')|^2\dd\xi_1\right)=
		\frac{N}{\pi}
		\int_{|\xi_1|\le N}
		|\widehat f(\xi_1,x')|^2\dd\xi_1.
		\]
		Integrating this inequality in $x'\in\R^{n-1}$, we obtain
		\[
		\|(P_Nf)(x_1,\cdot)\|_{L^2_{x'}}^2
		\le
		\frac{N}{\pi}
		\int_{\R^{n-1}}
		\int_{|\xi_1|\le N}
		|\widehat f(\xi_1,x')|^2\dd\xi_1\dd x'
		\le
		\frac{N}{\pi}\|f\|_2^2.
		\]
		Therefore
		\[
		\sup_{x_1\in\R}
		\|(P_Nf)(x_1,\cdot)\|_{L^2_{x'}}
		\le
		\left(\frac{N}{\pi}\right)^{1/2}\|f\|_2.
		\]
		Since $I$ has length $\ell$,
		\[
		\|{\bf 1}_I P_Nf\|_2^2
		=
		\int_I
		\|(P_Nf)(x_1,\cdot)\|_{L^2_{x'}}^2\dd x_1\le
		\ell
		\sup_{x_1\in\R}
		\|(P_Nf)(x_1,\cdot)\|_{L^2_{x'}}^2\le
		\frac{\ell N}{\pi}\|f\|_2^2.
		\]
		Hence
		\begin{equation}\label{eq:low-frequency}
			\|{\bf 1}_I P_Nf\|_2
			\le
			\left(\frac{\ell N}{\pi}\right)^{1/2}\|f\|_2.
		\end{equation}
		Next we estimate the high-frequency part. Since
		\[
		\widehat{(1-P_N)f}(\xi_1,x')
		=
		{\bf 1}_{\{|\xi_1|>N\}}\widehat f(\xi_1,x'),
		\]
		we have, for $|\xi_1|>N$,
		\[
		|\widehat f(\xi_1,x')|
		\le
		N^{-s}|\xi_1|^s|\widehat f(\xi_1,x')|
		=
		N^{-s}|\widehat{\Lambda_1^s f}(\xi_1,x')|.
		\]
		Thus
		\[
		\|(1-P_N)f\|_2^2
		=
		\int_{|\xi_1|>N}
		|\widehat f(\xi_1,x')|^2\dd\xi_1\dd x'\le
		N^{-2s}\|\Lambda_1^s f\|_2^2.
		\]
		Therefore
		\begin{equation}\label{eq:high-frequency}
			\|(1-P_N)f\|_2
			\le
			N^{-s}\|\Lambda_1^s f\|_2.
		\end{equation}
		The directional concentration assumption \eqref{eq:assumption-concentration} then gives
		\[
		\sqrt{\kappa}\|f\|_2
		\le
		\|{\bf 1}_I P_Nf\|_2+\|(1-P_N)f\|_2.
		\]
		Combining this with \eqref{eq:low-frequency} and \eqref{eq:high-frequency}, we get
		\[
		\sqrt{\kappa}\|f\|_2
		\le
		\left(\frac{\ell N}{\pi}\right)^{1/2}\|f\|_2
		+
		N^{-s}\|\Lambda_1^s f\|_2.
		\]
		\eqref{eq:concentration-poincare} then follows by choosing
		\[
		N=\frac{\pi\kappa}{4\ell}.
		\]
	\end{proof}
	
	\begin{lemma}
		\label{lem:directional-energy-cutoff}
		Let $0<s\le1$, and suppose $f\in L^2(\R^n)$ and $\Lambda_1^sf\in L^2(\R^n)$.
		Then
		\[
		\eta_Rf\longrightarrow f,
		\qquad
		\Lambda_1^s(\eta_Rf)\longrightarrow\Lambda_1^sf
		\quad\text{in }L^2(\R^n)
		\]
		as $R\to\infty$. In particular, with the energy pairing
		\[
		\pair{\Lambda_1^{2s}f}{g}
		:=
		\int_{\R^n}\Lambda_1^sf\cdot\Lambda_1^sg\,\dd x,
		\]
		one has
		\begin{equation}\label{eq:directional-energy-cutoff}
			\pair{\Lambda_1^{2s}f}{\eta_Rf}
			\longrightarrow
			\|\Lambda_1^sf\|_2^2.
		\end{equation}
		\end{lemma}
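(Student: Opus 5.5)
The first convergence $\eta_Rf\to f$ in $L^2$ follows from dominated convergence, since $|\eta_Rf-f|\le|f|$ and $\eta_R\to1$ pointwise. For the second convergence, I will write
\[
\Lambda_1^s(\eta_Rf)-\Lambda_1^sf=\bigl(\eta_R\Lambda_1^sf-\Lambda_1^sf\bigr)+[\Lambda_1^s,\eta_R]f .
\]
The first bracket tends to zero in $L^2$ by dominated convergence, exactly as before. The whole proof therefore reduces to showing that the commutator $[\Lambda_1^s,\eta_R]f$ tends to zero in $L^2$.

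\textbf{The case $s=1$.} Here $\Lambda_1$ is not local, so I will not compute the commutator directly. I will use the Fourier characterization: $\|\Lambda_1 g\|_2=\|\partial_1 g\|_2$, and $\Lambda_1=\mathcal H_1\partial_1$ with $\mathcal H_1$ the (unitary) directional Hilbert transform. The Leibniz rule gives
\[
\partial_1(\eta_Rf)=\eta_R\partial_1f+(\partial_1\eta_R)f .
\]
Since $\|\partial_1\eta_R\|_\infty\le C/R$ and $f\in L^2$, the second term is $O(R^{-1})\|f\|_2$. Hence $\partial_1(\eta_Rf)\to\partial_1f$ in $L^2$, and applying the unitary $\mathcal H_1$ gives the claim.

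\textbf{The case $0<s<1$.} I will work slice-wise in $x_1$ and use the energy representation from the preliminaries:
\[
\|\Lambda_1^s g\|_2^2=\frac{c_s}{2}\int_{\R^{n-1}}\iint\frac{|g(x_1,x')-g(y_1,x')|^2}{|x_1-y_1|^{1+2s}}\,\dd y_1\dd x_1\dd x'.
\]
Applied to $g=\eta_Rf-f=(\eta_R-1)f$, the pointwise splitting
\[
g(x)-g(y)=(\eta_R(x)-1)\bigl(f(x)-f(y)\bigr)+f(y)\bigl(\eta_R(x)-\eta_R(y)\bigr)
\]
bounds $\|\Lambda_1^s g\|_2^2$ by two pieces.
\begin{itemize}
\item The first piece is $\iint|1-\eta_R(x)|^2|f(x)-f(y)|^2|x_1-y_1|^{-1-2s}$. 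Its integrand is dominated by the integrable Gagliardo density of $f$, which is finite because $\Lambda_1^sf\in L^2$. So it tends to zero by dominated convergence.
\item The second piece is $\int|f(y)|^2K_{s,1,\eta_R}(y)\,\dd y$. Since $|\eta_R(x)-\eta_R(y)|\le\min\{CR^{-1}|x_1-y_1|,2\}$, the kernel satisfies $\|K_{s,1,\eta_R}\|_\infty\le CR^{-2s}$. This scaling is the one recorded in the proof of Lemma~\ref{lem:cutoff-error}. The piece is therefore $\le CR^{-2s}\|f\|_2^2\to0$.
\end{itemize}
Here, unlike in Lemma~\ref{lem:cutoff-error}, the $L^\infty$ bound suffices because $f\in L^2$ globally.

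\textbf{The pairing.} Since $\Lambda_1^s(\eta_Rf)\to\Lambda_1^sf$ in $L^2$, the definition of the energy pairing and continuity of the inner product give
\[
\pair{\Lambda_1^{2s}f}{\eta_Rf}=\int\Lambda_1^sf\cdot\Lambda_1^s(\eta_Rf)\to\|\Lambda_1^sf\|_2^2 .
\]
The vector-valued case follows componentwise.

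\textbf{Expected obstacle.} The only delicate point is justifying the slice-wise Gagliardo representation for a general $L^2$ function with $\Lambda_1^sf\in L^2$, rather than for a Schwartz function. I will get this from Plancherel in $x_1$ for almost every $x'$ together with Tonelli. This is standard: the identity $\int|\xi_1|^{2s}|\hat g|^2=\frac{c_s}{2}\iint|g(x_1)-g(y_1)|^2|x_1-y_1|^{-1-2s}$ holds for every $g\in L^2(\R)$, with both sides possibly infinite.
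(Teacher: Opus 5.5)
Your proof is correct. For $s=1$ it is the paper's argument: the Leibniz rule for $\partial_1(\eta_Rf)$, the bound $\|(\partial_1\eta_R)f\|_2\le CR^{-1}\|f\|_2$, and $\|\Lambda_1 g\|_2=\|\partial_1 g\|_2$.

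For $0<s<1$ you take a genuinely different route. The paper stays at the operator level. It writes the commutator pointwise from the singular integral for $\Lambda_1^s$ (kernel $|h|^{-1-s}$),
$[\Lambda_1^s,\eta_R]f(x)=c\int_{\R}\bigl(\eta_R(x)-\eta_R(x+he_1)\bigr)f(x+he_1)|h|^{-1-s}\,\dd h$,
and applies Minkowski's inequality to get $\|[\Lambda_1^s,\eta_R]f\|_2\le CR^{-s}\|f\|_2$. This is done first for smooth $f$ and then extended by density. The term $(\eta_R-1)\Lambda_1^sf$ is handled by dominated convergence.

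You instead work at the level of the quadratic form. You expand the slice-wise Gagliardo energy of $(\eta_R-1)f$. The piece carrying $|f(x)-f(y)|^2$ goes to zero by dominated convergence against the Gagliardo density of $f$. The other piece is bounded by $\int|f|^2K_{s,1,\eta_R}\le CR^{-2s}\|f\|_2^2$, reusing the cutoff kernel and its $L^\infty$ scaling from the proof of Lemma~\ref{lem:cutoff-error}.

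What each buys:
\begin{itemize}
\item The paper's route yields a genuine $L^2\to L^2$ commutator estimate that needs only $f\in L^2$. It relies, however, on the pointwise formula for smooth functions plus a density and closedness step that the paper only sketches.
\item Your route needs no density argument. The Plancherel--Tonelli identity between $\|\Lambda_1^s g\|_2^2$ and the slice-wise Gagliardo seminorm holds for every $g\in L^2$, with both sides possibly infinite. It also ties the lemma directly to the localized-energy kernels of Section~2.
\end{itemize}
The rates you obtain are equivalent: $R^{-2s}$ for a squared norm versus $R^{-s}$ for a norm.

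One expository point: you announce at the start that everything reduces to the commutator $[\Lambda_1^s,\eta_R]f$, but you never use that reduction. In both cases you estimate $\Lambda_1^s\bigl((\eta_R-1)f\bigr)$ directly, so you can drop that sentence.
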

	
	\begin{proof}
		Since $\eta_R(x)\to\eta(0)=1$ pointwise and
		$\sup_R\|\eta_R\|_\infty<\infty$, dominated convergence gives
		\[
		\|\eta_Rf-f\|_2\longrightarrow0.
		\]
		
		Suppose first that $0<s<1$. For Schwartz functions, the singular
		integral formula for $\Lambda_1^s$ gives
		\[
		[\Lambda_1^s,\eta_R]f(x)
		=
		c_s\int_{\R}
		\frac{\bigl(\eta_R(x)-\eta_R(x+he_1)\bigr)f(x+he_1)}
		{|h|^{1+s}}\,\dd h,
		\]
		where $c_s$ is the normalization constant. The cutoff satisfies
		\[
		|\eta_R(x)-\eta_R(x+he_1)|
		\le C_\eta\min\{|h|/R,1\}.
		\]
		Minkowski's inequality and translation invariance of the $L^2$ norm
		therefore yield
		\begin{equation}\label{eq:commutator}
			\|[\Lambda_1^s,\eta_R]f\|_2
			\le
			C_{s,\eta}\|f\|_2
			\int_{\R}\frac{\min\{|h|/R,1\}}{|h|^{1+s}}\,\dd h\le C_{s,\eta}R^{-s}\|f\|_2.
		\end{equation}
		The estimate extends by density to every $f\in L^2$ for which
		$\Lambda_1^sf\in L^2$. Since
		\[
		\Lambda_1^s(\eta_Rf)-\Lambda_1^sf
		=
		(\eta_R-1)\Lambda_1^sf+[\Lambda_1^s,\eta_R]f,
		\]
		the first term tends to zero in $L^2$ by dominated convergence,
		and the second tends to zero by the commutator estimate \eqref{eq:commutator}. This proves
		\[
		\Lambda_1^s(\eta_Rf)\longrightarrow\Lambda_1^sf
		\quad\text{in }L^2
		\]
		for $0<s<1$.
		
		For $s=1$, $\|\Lambda_1f\|_2=\|\partial_1f\|_2$, and the weak
		product rule gives
		\[
		\partial_1(\eta_Rf)-\partial_1f
		=
		(\eta_R-1)\partial_1f+(\partial_1\eta_R)f.
		\]
		The first term tends to zero in $L^2$ by dominated convergence,
		while
		\[
		\|(\partial_1\eta_R)f\|_2
		\le
		R^{-1}\|\partial_1\eta\|_\infty\|f\|_2
		\longrightarrow0.
		\]
		Thus the asserted convergence also holds for $s=1$.
		
		Finally, by Cauchy--Schwarz,
		\[
		\begin{aligned}
			\left|
			\pair{\Lambda_1^{2s}f}{\eta_Rf}
			-\|\Lambda_1^sf\|_2^2
			\right|
			&=
			\left|
			\int_{\R^n}\Lambda_1^sf\cdot
			\bigl(\Lambda_1^s(\eta_Rf)-\Lambda_1^sf\bigr)\,\dd x
			\right|\\
			&\le
			\|\Lambda_1^sf\|_2
			\|\Lambda_1^s(\eta_Rf)-\Lambda_1^sf\|_2
			\longrightarrow0.
		\end{aligned}
		\]
		This proves \eqref{eq:directional-energy-cutoff}.
	\end{proof}
	
	\subsection{Pressure representation and energy identity}
	
	For the perturbation stress, write
	\[
	F=F_{\mathrm{lin}}+F_{vv}+F_{ww},
	\]
	where
	\[
	\begin{aligned}
		F_{\mathrm{lin}}
		&=u_0\otimes v+v\otimes u_0
		-b_0\otimes w-w\otimes b_0,\\
		F_{vv}&=v\otimes v,
		\qquad
		F_{ww}=-w\otimes w.
	\end{aligned}
	\]
	Define
	\[
	Q_{\mathrm{lin}}
	=
	\sum_{k,\ell=1}^n\mathcal R_k\mathcal R_\ell
	(F_{\mathrm{lin}})_{k\ell},
	\qquad
	Q_{vv}
	=
	\sum_{k,\ell=1}^n\mathcal R_k\mathcal R_\ell
	(v_kv_\ell),
	\]
	and
	\[
	Q_{ww}
	=
	-\sum_{k,\ell=1}^n\mathcal R_k\mathcal R_\ell
	(w_kw_\ell),
	\qquad
	Q=Q_{\mathrm{lin}}+Q_{vv}+Q_{ww}.
	\]
	Since $U,B\in L^\infty$ and $v,w\in L^2$,
	$F_{\mathrm{lin}}\in L^2$. Also
	\[
	F_{vv}\in L^{p/2,\infty},
	\qquad
	F_{ww}\in L^{q/2,\infty}.
	\]
The boundedness of Riesz transform then implies 
	\begin{equation}\label{eq:Q-weak}
	Q_{\mathrm{lin}}\in L^2,\qquad
	Q_{vv}\in L^{p/2,\infty},\qquad Q_{ww}\in L^{q/2,\infty}.
	\end{equation}
	
	\begin{lemma}
		\label{lem:pressure-noncompact}
		Under the assumptions of Theorem~\ref{thm:intro-noncompact},
		\[
		P=Q+C
		\]
		for some constant $C$.
	\end{lemma}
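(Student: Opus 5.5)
We follow the proof of Lemma~\ref{lem:pressure} and Lemma~\ref{lem:stab-pressure}. First we rewrite the momentum equation of \eqref{eq:perturbation} in conservative form,
\[
\Lambda_1^{2\alpha}v+\diver F+\nabla P=0,
\qquad F=F_{\mathrm{lin}}+F_{vv}+F_{ww}.
\]
This uses $\diver u_0=\diver b_0=\diver v=\diver w=0$; note that $\diver u_0=\partial_1U(x')=0$. Taking the divergence gives $-\Delta P=\sum_{k,\ell}\partial_k\partial_\ell F_{k\ell}$. The Riesz multiplier identity shows that $Q$ satisfies the same equation. Hence $H=P-Q$ is harmonic, and
\[
\nabla H=-\Lambda_1^{2\alpha}v-\diver F-\nabla Q .
\]
Each term on the right is tempered. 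Indeed, $v\in L^2$, so $\Lambda_1^{2\alpha}v$ is tempered by duality. Moreover, $F$ and $Q$ lie in $L^2+L^{p/2,\infty}+L^{q/2,\infty}$ by \eqref{eq:Q-weak}. Thus each component of $\nabla H$ is a tempered harmonic distribution, hence a polynomial.

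Next we show $\pair{\nabla H}{\phi_R}\to0$ for $\phi_R=R^{-n}\phi(\cdot/R)$ and every $\phi\in C_c^\infty$. We estimate the terms one at a time.
\begin{itemize}
\item For the dissipative term, $|\pair{v}{\Lambda_1^{2\alpha}\phi_R}|\le\|v\|_2\|\Lambda_1^{2\alpha}\phi_R\|_2=R^{-2\alpha-n/2}\|v\|_2\|\Lambda_1^{2\alpha}\phi\|_2$. For $0<\alpha<1$ the image $\Lambda_1^{2\alpha}\phi$ is in $L^2$ by the tail bound noted in the preliminaries; for $\alpha=1$ it is simply $-\partial_1^2\phi$.
\item For the linear stress and its pressure, $F_{\mathrm{lin}},Q_{\mathrm{lin}}\in L^2$ (using $U,B\in L^\infty$). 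Pairing with $\nabla\phi_R$ gives the bound $C_\phi R^{-1-n/2}(\|F_{\mathrm{lin}}\|_2+\|Q_{\mathrm{lin}}\|_2)$.
\item For the quadratic terms, Lorentz H\"older (Lemma~\ref{lem:holder}) with $\|\nabla\phi_R\|_{L^{(p/2)',1}}=R^{-1-2n/p}\|\nabla\phi\|_{L^{(p/2)',1}}$ gives decay $R^{-1-2n/p}\|v\|_{L^{p,\infty}}^2$. This requires $p/2>1$, which holds since $p>3$. Similarly the $w$-terms decay like $R^{-1-2n/q}\|w\|_{L^{q,\infty}}^2$, which requires $q>2$; this follows from $1/p+2/q<1$.
\end{itemize}
All exponents are negative, so $\pair{\nabla H}{\phi_R}\to0$.

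Finally, the homogeneous-degree argument at the end of the proof of Lemma~\ref{lem:pressure} shows that a nonzero polynomial component of $\nabla H$ is incompatible with this limit. Hence $\nabla H=0$ and $P=Q+C$.

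The only point needing care is the first step: the background terms must fit the divergence form $\diver F_{\mathrm{lin}}$, with $F_{\mathrm{lin}}$ in $L^2$ even though $u_0,b_0$ are not decaying. This works because $U,B$ are bounded and divergence-free as fields on $\R^n$. Boundedness of $U,B$ is exactly what the hypothesis $U,B\in W^{1,\infty}$ provides.
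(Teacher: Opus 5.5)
Your proposal is correct and follows the paper's proof essentially step for step. It shows that $H=P-Q$ is harmonic and that $\nabla H$ is tempered, hence a polynomial. It then uses the scaling bounds $R^{-2\alpha-n/2}$, $R^{-1-n/2}$, $R^{-1-2n/p}$, $R^{-1-2n/q}$ on $\pair{\nabla H}{\phi_R}$ and the homogeneous-degree argument from Lemma~\ref{lem:pressure}. Your explicit check that the background terms can be written as $\diver F_{\mathrm{lin}}$ (using $\partial_1U=\partial_1B=0$) fills in a detail the paper leaves implicit.
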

	
	\begin{proof}
		Taking the divergence of the momentum equation gives
		\[
		-\Delta P=\partial_k\partial_\ell F_{k\ell}.
		\]
		The same equation holds for $Q$, so $H=P-Q$ is harmonic.
		The momentum equation also gives
		\[
		\nabla H=-\Lambda_1^{2\alpha}v-\diver F-\nabla Q,
		\]
		which is tempered. Hence every component of $\nabla H$ is a
		tempered harmonic distribution and therefore a polynomial.
		
		For $\phi\in C_c^\infty(\R^n)$, set
		\[
		\phi_R(x)=R^{-n}\phi(x/R).
		\]
		If $0<\alpha<1$, duality and scaling give
		\[
		|\pair{\Lambda_1^{2\alpha}v}{\phi_R}|
		\le
		\|v\|_2\|\Lambda_1^{2\alpha}\phi_R\|_2
		\le
		C_\phi R^{-2\alpha-n/2}\|v\|_2
		\longrightarrow0.
		\]
		For $\alpha=1$, the same estimate follows with
		$\Lambda_1^2=-\partial_1^2$ and exponent $-2-n/2$.
		The linear stress and pressure terms satisfy
		\[
		|\pair{\diver F_{\mathrm{lin}}+\nabla Q_{\mathrm{lin}}}
		{\phi_R}|
		\le
		C_\phi R^{-1-n/2}
		\bigl(\|F_{\mathrm{lin}}\|_2+\|Q_{\mathrm{lin}}\|_2\bigr)
		\longrightarrow0.
		\]
		For the quadratic terms, we have
		\[
		\begin{aligned}
			|\pair{\diver F_{vv}+\nabla Q_{vv}}{\phi_R}|
			&\le
			C_\phi R^{-1-2n/p}
			\bigl(\|F_{vv}\|_{p/2,\infty}+\|Q_{vv}\|_{p/2,\infty}\bigr)
			\longrightarrow0,\\
			|\pair{\diver F_{ww}+\nabla Q_{ww}}{\phi_R}|
			&\le
			C_\phi R^{-1-2n/q}
			\bigl(\|F_{ww}\|_{q/2,\infty}+\|Q_{ww}\|_{q/2,\infty}\bigr)
			\longrightarrow0.
		\end{aligned}
		\]
		Consequently,
		\[
		\pair{\nabla H}{\phi_R}\longrightarrow0
		\qquad\text{for every }\phi\in C_c^\infty(\R^n).
		\]
		A nonzero polynomial cannot have this property: testing against
		a compactly supported function with nonzero pairing with its
		highest-degree homogeneous part gives a nonzero polynomial in $R$.
		Thus $\nabla H=0$, and $P=Q+C$.
	\end{proof}
	
	\begin{proposition}
		\label{prop:noncompact-energy}
		Under the assumptions of Theorem~\ref{thm:intro-noncompact}, we have $\Lambda_1^\alpha v$ and $\Lambda_1^\beta w\in L^2(\R^n)$. Moreover, it holds that
		\[
		\|\Lambda_1^\alpha v\|_2^2
		+\|\Lambda_1^\beta w\|_2^2
		+I(v,w)=0,
		\]
		where
		\[
		I(v,w)=
		\int_{\R^n}
		\left[
		v_1v\cdot\nabla U-w_1w\cdot\nabla U
		+w_1v\cdot\nabla B-v_1w\cdot\nabla B
		\right]\dd x.
		\]
	\end{proposition}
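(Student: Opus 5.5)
The plan is to run the expanding-cutoff argument of Section~3, now with cutoff $\eta_R$ rather than $\eta_R^2$. This first shows the localized dissipation stays bounded, which gives $\Lambda_1^\alpha v,\Lambda_1^\beta w\in L^2$. Lemma~\ref{lem:directional-energy-cutoff} then lets us pass to the limit and obtain the global identity.

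\textbf{Step 1: localized identity.} Test the first equation of \eqref{eq:perturbation} by $\eta_R^2 v$ and the second by $\eta_R^2 w$, and add. Replace $P$ by $Q$ using Lemma~\ref{lem:pressure-noncompact}. As in Proposition~\ref{prop:stab-energy}, the background-gradient terms reduce to
\[
I_R:=\int \eta_R^2\bigl[v_1v\cdot\nabla U-w_1w\cdot\nabla U+w_1v\cdot\nabla B-v_1w\cdot\nabla B\bigr]\,\dd x .
\]
The transport terms with $u_0,b_0,v,w$ and the pressure term produce only flux terms carrying $\nabla(\eta_R^2)$. Since $\nabla U,\nabla B\in L^\infty$ and $v,w\in L^2$, dominated convergence gives $I_R\to I(v,w)$.

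\textbf{Step 2: the flux terms vanish.} Each flux term has a factor $|\nabla\eta_R|\le C/R$ supported on $A_R$. The background fluxes are of the form $\frac1R\int_{A_R}|U||v|^2$ and $\frac1R\int_{A_R}|B||v||w|$. These are $O(R^{-1}(\|v\|_2^2+\|w\|_2^2))$, since $U,B\in L^\infty$ (they lie in $W^{1,\infty}$). The linear pressure term is $\frac1R\int_{A_R}|Q_{\mathrm{lin}}||v|\le R^{-1}\|Q_{\mathrm{lin}}\|_2\|v\|_2$. The cubic terms are $\frac1R\int_{A_R}(|v|^3+|v||w|^2+|Q_{vv}||v|+|Q_{ww}||v|)$, handled with \eqref{eq:Q-weak}. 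Here the exponent conditions are not of the scaling type used in Theorem~\ref{thm:main}. So I will not seek decay in $R$, and will instead use $v\in L^2\cap L^{p,\infty}$. This interpolates to $v\in L^3$ when $p>3$, and $w\in L^2\cap L^{q,\infty}$ gives $w\in L^{s}$ for every $2<s<q$. Choose $s$ with $\frac1{p'}\cdot$ conventions arranged so that $\frac1r+\frac2s=1$ for some $r\in(2,p)$; this is possible because $\frac1p+\frac2q<1$. The terms $\int|v||w|^2$ and $\int|Q_{ww}||v|$ are then finite by H\"older and Riesz boundedness on $L^{s/2}$. With the extra $1/R$, every flux term tends to $0$. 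This is the role played by $(r,s)$ in the paper's remark.

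\textbf{Step 3: boundedness of dissipation and passage to the limit.} For $0<\alpha<1$, Lemma~\ref{lem:localized} with $v\in L^{r,\infty}$, $r>2$, bounds $\|\Lambda_1^\alpha(\eta_Rv)\|_2^2$ by the pairing plus the cutoff error. I expect the main obstacle here: Lemma~\ref{lem:cutoff-error} requires $r<2n/(n-2\alpha)$, which is not assumed. Instead I will use $v\in L^2$ directly. By Lemma~\ref{lem:decay_cutoff_kernel} and scaling, $\|K_{\alpha,1,\eta_R}\|_\infty\le CR^{-2\alpha}$, so $\int|v|^2K\le CR^{-2\alpha}\|v\|_2^2\to0$. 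The case $\alpha=1$ uses Lemma~\ref{lem:classical-localized} with $\|\partial_1\eta_R\|_\infty^2\le CR^{-2}$, and $w$ is treated the same way. Consequently
\[
\|\Lambda_1^\alpha(\eta_Rv)\|_2^2+\|\Lambda_1^\beta(\eta_Rw)\|_2^2=-I_R+o(1),
\]
and the right side is bounded.

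Since $\eta_Rv\to v$ in $L^2$, the bounded sequence $\Lambda_1^\alpha(\eta_Rv)$ has weak $L^2$ limit points. Distributionally these must equal $\Lambda_1^\alpha v$, so $\Lambda_1^\alpha v\in L^2$, and likewise $\Lambda_1^\beta w\in L^2$. Lemma~\ref{lem:directional-energy-cutoff} now gives $\Lambda_1^\alpha(\eta_Rv)\to\Lambda_1^\alpha v$ strongly in $L^2$, and the same for $w$. Letting $R\to\infty$ in the displayed identity yields $\|\Lambda_1^\alpha v\|_2^2+\|\Lambda_1^\beta w\|_2^2+I(v,w)=0$.
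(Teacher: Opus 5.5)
Your proposal is correct and follows essentially the paper's route: the $\eta_R^2$-localized energy identity, flux terms killed by the factor $1/R$ using $v\in L^3$ and the same $(r,s)$ pairing with $1/r+2/s=1$, uniform bounds on the localized dissipation, weak compactness, and Lemma~\ref{lem:directional-energy-cutoff} to pass to the limit. One small omission: state explicitly that $Q_{vv}\in L^{3/2}$ follows from $v\in L^3$, since \eqref{eq:Q-weak} alone does not control $\frac1R\int_{A_R}|Q_{vv}||v|$ for large $p$. The one real difference is that you bound the nonlocal cutoff error directly by $\|K_{\alpha,1,\eta_R}\|_\infty\le CR^{-2\alpha}$ and $v\in L^2$, rather than through Lemma~\ref{lem:cutoff-error}; this cleanly avoids the restriction $r<2n/(n-2\alpha)$, which the paper's citation leaves implicit.
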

	
	\begin{proof}
		We first show that $\Lambda_1^\alpha v$ and $\Lambda_1^\beta w\in L^2(\R^n)$.
Test the two perturbation equations	by $\eta_Rv$ and $\eta_Rw$, respectively. The divergence-free conditions yield the standard cancellations, thus we have
\begin{equation}\label{eq-energy-nocompact-cut}
	D_R+I_R+E_R^{(1)}+E_R^{(2)}=0,
\end{equation}
where
	\[
\begin{aligned}
	D_R&=\ip{\Lambda_1^{2\alpha}v}{\eta_R^2v}+	\ip{\Lambda_1^{2\beta}w}{\eta_R^2w},\\
	I_R	&=\int_{\R^n}\eta_R^2	\left[
	v_1v'\cdot\nabla' U-w_1w'\cdot\nabla' U
	+w_1v'\cdot\nabla' B-v_1w'\cdot\nabla' B
	\right]\dd x,\\
	E_R^{(1)}&=\int_{\R^n}\Big[	-\tfrac12(|v|^2+|w|^2)u_0\cdot\nabla(\eta_R)^2
	+(v\cdot w)b_0\cdot\nabla(\eta_R)^2\Big]\,\dd x,\\
	E_R^{(2)}&=\int_{\R^n}\Big[-\tfrac12|v|^2v\cdot\nabla(\eta_R^2)
	-\tfrac12|w|^2v\cdot\nabla(\eta_R^2)
	+(v\cdot w)w\cdot\nabla(\eta_R^2)
	-Pv\cdot\nabla(\eta_R^2)
	\Big]\dd x.
\end{aligned}
\]
The term $I_R$ is uniformly bounded because
\begin{align}\label{1.16}
	|I_R|
	&\leq
	\frac{M_U}{2}
	\left(\norm{v}_{L^2}^2+\norm{w}_{L^2}^2\right)
	+
	M_B\norm{v}_{L^2}\norm{w}_{L^2}.
\end{align}
Since $\supp\nabla\eta_R\subset A_R$ and
$\|\nabla\eta_R\|_\infty\le C/R$, we have
\[
\left|E_R^{(1)}\right|
\le
\frac{C}{R}\left[
\|U\|_\infty(\|v\|_2^2+\|w\|_2^2)
+\|B\|_\infty\|v\|_2\|w\|_2
\right]
\longrightarrow0,
\]
and
\[
\left|E_R^{(2)}\right|\le \frac{C}{R}\int_{A_R} |v|^3\,\dd x+\frac{C}{R}\int_{A_R}|v||w|^2\,\dd x +\left|\int_{A_R}Pv\cdot\nabla\eta_R\,\dd x\right|.
\]
The condition $v\in L^2\cap L^{p,\infty}$, with $p>3$, implies
$v\in L^3$. Therefore
\begin{equation}\label{eq:u-weak}
\frac1R\int_{A_R}|v|^3\,\dd x
\le
\frac1R\|v\|_{L^3(A_R)}^3
\longrightarrow0.
\end{equation}
For the mixed term, choose $s$ such that
\[
\frac{2p}{p-1}<s<\min\left(4,q\right).
\]
Such $s$ exists because $1/p+2/q<1$ and $p>3$ give $q>2p/(p-1)$ and $4>2p/(p-1)$. Define $r$ by $1/r+2/s=1$. $s<4$ gives $r>2$; $s>2p/(p-1)$ gives $r<p$. The assumptions $v\in L^2\cap L^{p,\infty}$ and $w\in L^2\cap L^{q,\infty}$ then gives $v\in L^r$ and $w\in L^s$. Applying H\"older's inequality, we obtain
\begin{equation}
\frac{1}{R}\int_{A_R}|v||w|^2\,\dd x\leq \frac{1}{R}\|v\|_{L^r(A_R)}\|w\|_{L^s(A_R)}^2\longrightarrow 0.
\end{equation}
By Lemma~\ref{lem:pressure-noncompact}, write $P=Q+C$.
The constant contributes zero because
\[
\int_{\R^n}v\cdot\nabla\eta_R\,\dd x
=
-\int_{\R^n}\eta_R\diver v\,\dd x=0.
\]
For the linear pressure,
\begin{equation}
\left|
\int_{\R^n}Q_{\mathrm{lin}}v\cdot\nabla\eta_R\,\dd x
\right|
\le
\frac{C}{R}\|Q_{\mathrm{lin}}\|_2\|v\|_2
\longrightarrow0.
\end{equation}
Since $v\in L^3$, the Riesz-transform bound gives
$Q_{vv}\in L^{3/2}$, and hence
\begin{equation}
\left|
\int_{\R^n}Q_{vv}v\cdot\nabla\eta_R\,\dd x
\right|
\le
\frac{C}{R}
\|Q_{vv}\|_{L^{3/2}(A_R)}
\|v\|_{L^3(A_R)}
\longrightarrow0.
\end{equation}
Finally, using $w\in L^s$ and $v\in L^r$, we obtain
\begin{equation}\label{eq:Pw-weak}
\left|
\int_{\R^n}Q_{ww}v\cdot\nabla\eta_R\,\dd x
\right|
\le
\frac{C}{R}
\|Q_{ww}\|_{L^{s/2}(A_R)}
\|v\|_{L^r(A_R)}
\longrightarrow0.
\end{equation}
Thus $E_R^{(2)}\to0$. Combining the estimates of $I_R$, $E_R^{(1)}$ and $E_R^{(2)}$ and  \eqref{eq-energy-nocompact-cut}, we obtain
\[
\left|\ip{\Lambda_1^{2\alpha}v}{\eta_R^2v}\right|+\left|	\ip{\Lambda_1^{2\beta}w}{\eta_R^2w}\right|\le  \frac{M_U}{2}
\left(\norm{v}_{L^2}^2+\norm{w}_{L^2}^2\right)
+
M_B\norm{v}_{L^2}\norm{w}_{L^2}+o(1).
\]
Applying Lemma \ref{lem:localized} and Lemma \ref{lem:cutoff-error} for $0<\alpha,\beta<1$ and applying Lemma \ref{lem:classical-localized} for $\alpha$ or $\beta$ equals 1, we have
\[\norm{\Lambda_1^\alpha(\eta_Rv)}_{L^2}^2
+
\norm{\Lambda_1^\beta(\eta_Rw)}_{L^2}^2
\le\frac{M_U}{2}
\left(\norm{v}_{L^2}^2+\norm{w}_{L^2}^2\right)
+
M_B\norm{v}_{L^2}\norm{w}_{L^2}+o(1).
\]
Since \(\eta_Rv\to v\) and \(\eta_Rw\to w\) in \(L^2\), weak compactness gives
\begin{equation}\label{eq:directional-energy-entry}
\Lambda_1^\alpha v\in L^2,
\qquad
\Lambda_1^\beta w\in L^2.
\end{equation}

We then show the energy identity. Since $\Lambda_1^\alpha v$ and $\Lambda_1^\beta\in L^2(\R^n)$, by Lemma \ref{lem:directional-energy-cutoff}, the pairing in $D_R$ converge:
\[\ip{\Lambda_1^{2\alpha}v}{\eta_R^2v}\longrightarrow \|\Lambda_1^{\alpha} v\|_{L^2}^2,\quad 	\ip{\Lambda_1^{2\beta}w}{\eta_R^2w}\longrightarrow \|\Lambda_1^\beta w\|_{L^2}^2.
\]
The integrand defining $I(v,w)$ is integrable because
		$\nabla U,\nabla B\in L^\infty$ and $v,w\in L^2$.
		Dominated convergence gives $I_R\to I(v,w)$. Passing to the
		limit in \eqref{eq-energy-nocompact-cut} proves the proposition.
	\end{proof}
	
	\subsection{Proof of Theorem~\ref{thm:intro-noncompact}}
	
	By Remark~\ref{rem:concentration-attainment}, choose intervals
	$I_v,I_w$ of lengths $\ell_v,\ell_w$ such that
	\[
	\int_{\{x_1\in I_v\}}|v|^2\,\dd x
	\ge\kappa_v\|v\|_2^2,
	\qquad
	\int_{\{x_1\in I_w\}}|w|^2\,\dd x
	\ge\kappa_w\|w\|_2^2.
	\]
	Lemma~\ref{lem:directional-concentration} and \eqref{eq:directional-energy-entry} gives
	\begin{equation}\label{eq:concentration-v-w}
	\|v\|_2^2\le a\|\Lambda_1^\alpha v\|_2^2,
	\qquad
	\|w\|_2^2\le d\|\Lambda_1^\beta w\|_2^2.
	\end{equation}
	This includes the endpoint $s=1$, where
	$\|\Lambda_1f\|_2=\|\partial_1f\|_2$.
	
	Set
	\[
	X=\|\Lambda_1^\alpha v\|_2,
	\qquad
	Y=\|\Lambda_1^\beta w\|_2.
	\]
	By Proposition~\ref{prop:noncompact-energy},
	\[
	X^2+Y^2+I(v,w)=0.
	\]
	Moreover,
	\[
	|I(v,w)|
	\le
	\frac{M_U}{2}(\|v\|_2^2+\|w\|_2^2)
	+M_B\|v\|_2\|w\|_2.
	\]
	The concentration estimates imply
	\[
	\|v\|_2\|w\|_2\le\sqrt{ad}\,XY.
	\]
	It follows that
	\[
	(2-M_Ua)X^2+(2-M_Ud)Y^2
	-2M_B\sqrt{ad}\,XY\le0.
	\]
By \eqref{eq:noncompact}, we obtain $X=Y=0$.
	The concentration estimate \eqref{eq:concentration-v-w} then gives $v=w=0$, consequently, $\nabla P=0$.

	\bigskip
	
		{\bf Data availability}\quad No data are associated with this article.
	
	\section*{Declarations} 
	
	{\bf Conflict of interest}\quad The authors have no conflict of interest to declare that are relevant to the content of this article.

	\appendix
	
	\section{Proof of the fractional compact-support lemma}
	\label{compact-support}
	
	\begin{proof}[Proof of Lemma~\ref{lem:compact-support}]
		Choose $M>0$ such that $\supp f\subset[-M,M]$. For $t>M$,
		the singular-integral formula gives
		\[
		(-\partial_t^2)^sf(t)
		=
		-c_s\int_{\R}\frac{f(r)}{(t-r)^{1+2s}}\,\dd r.
		\]
		Since $(-\partial_t^2)^sf$ is compactly supported, this
		integral vanishes for all sufficiently large $t$.
		
		For such $t$ and $r\in[-M,M]$, expand
		\[
		(t-r)^{-1-2s}
		=
		t^{-1-2s}\sum_{k=0}^\infty
		\frac{(1+2s)_k}{k!}\left(\frac rt\right)^k.
		\]
		The series converges uniformly for $r\in[-M,M]$ when
		$t>M$. Consequently, the power series
		\[
		\sum_{k=0}^\infty
		\frac{(1+2s)_k}{k!}\,m_k z^k,
		\qquad
		m_k=\int_{\R}r^kf(r)\,\dd r,
		\]
		vanishes for all sufficiently small positive $z=1/t$.
		Uniqueness of power-series coefficients gives $m_k=0$ for
		every $k\ge0$.
		
		The Fourier transform of the compactly supported function $f$
		extends to an entire function, and
		\[
		\widehat f^{(k)}(0)=(-i)^km_k=0
		\qquad(k\ge0).
		\]
		Hence $\widehat f\equiv0$, and $f=0$.
	\end{proof}


\begin{thebibliography}{99}
		
		\bibitem{BWZ}
		U. Biccari, M. Warma, and E. Zuazua,
		\emph{Local elliptic regularity for the Dirichlet fractional Laplacian},
		Adv. Nonlinear Stud. \textbf{17}(2) (2017), 387--409.
		
		\bibitem{CF}
		J. A. Carrillo and L. C. F. Ferreira, 
		\emph{Self-similar solutions and large time asymptotics for the dissipative quasi-geostrophic equation}, 
		Monatsh.Math. {\bf151} (2007), 111--142
		
		
		\bibitem{Chae2014}
		D. Chae,
		\emph{Liouville-type theorems for the forced Euler equations and the Navier--Stokes equations},
		Commun. Math. Phys. \textbf{326} (2014), 37--48.
		
		\bibitem{Chae2020}
		D. Chae,
		\emph{Note on the Liouville type problem for the stationary Navier--Stokes equations in $\R^3$},
		J. Differential Equations \textbf{268} (2020), 1043--1049.
		
		\bibitem{Chae2025}
		D. Chae,
		\emph{On the Liouville type theorems for the stationary Navier--Stokes equations in $\R^3$},
		J. Differential Equations \textbf{445} (2025), 113597.
		
		\bibitem{Chae2026}
		D. Chae,
		\emph{Liouville Type Theorems for the Stationary Navier--Stokes Equations in $\R^3$},
		Commun. Math. Phys. \textbf{407} (2026), article~53.
		
		\bibitem{ChaeAnisotropicMHD2023}
		D. Chae,
		\emph{Anisotropic Liouville type theorem for the MHD system in $\R^n$},
		J. Math. Phys. \textbf{64} (2023), 121501.
		
		\bibitem{ChaeAnisotropicNS2023}
		D. Chae,
		\emph{Anisotropic Liouville type theorem for the stationary Navier--Stokes equations in $\R^3$},
		Appl. Math. Lett. \textbf{142} (2023), 108655.
		
		\bibitem{ChaeLee2024}
		D. Chae and J. Lee,
		\emph{On Liouville type results for the stationary MHD in $\R^3$},
		Nonlinearity \textbf{37} (2024), 095006.
		
		\bibitem{ChaeWolf2016}
		D. Chae and J. Wolf,
		\emph{On Liouville type theorems for the steady Navier--Stokes equations in $\R^3$},
		J. Differential Equations \textbf{261} (2016), 5541--5560.
		
		\bibitem{ChaeWolf2019}
		D. Chae and J. Wolf,
		\emph{On Liouville type theorem for the stationary Navier--Stokes equations},
		Calc. Var. Partial Differential Equations \textbf{58} (2019), article~111.
		
		\bibitem{ChaeWolfMHD2021}
		D. Chae and J. Wolf,
		\emph{On Liouville type theorems for the stationary MHD and Hall--MHD systems},
		J. Differential Equations \textbf{295} (2021), 233--248.
		
		\bibitem{ChamorroJarrinLemarie2021}
		D. Chamorro, O. Jarr\'in, and P.-G. Lemari\'e-Rieusset,
		\emph{Some Liouville theorems for stationary Navier--Stokes equations in Lebesgue and Morrey spaces},
		Ann. Inst. H. Poincar\'e C Anal. Non Lin\'eaire \textbf{38} (2021), no.~3, 689--710.
		
		\bibitem{ChamorroPoggi2025}
		D. Chamorro and B. Poggi,
		\emph{On an almost sharp Liouville-type theorem for fractional Navier--Stokes equations},
		Publ. Mat. \textbf{69} (2025), 27--43.
		
		
		
		\bibitem{ChenLiWang2022}
		X. Chen, S. Li, and W. Wang,
		\emph{Remarks on Liouville-type theorems for the steady MHD and Hall-MHD equations},
		J. Nonlinear Sci. \textbf{32} (2022), article~12.
		
		\bibitem{ChoNeustupaYang2024}
		Y. Cho, J. Neustupa and M. Yang,
		\emph{New Liouville type theorems for the stationary Navier--Stokes, MHD, and Hall--MHD equations},
		Nonlinearity \textbf{37} (2024), 035007.
		
		\bibitem{DNPV}
		E. Di Nezza, G. Palatucci and E. Valdinoci,
		\emph{Hitchhiker's guide to the fractional Sobolev spaces},
		Bull. Sci. Math. \textbf{136} (2012), no. 5, 521--573.
		
		\bibitem{DMZ}
		Y. Dong and Z. Zhang,
		\emph{Liouville Type Theorems for the $3$D Steady Navier--Stokes Equations Under Gradient Conditions},
		Appl. Math. \& Optim. \textbf{94} (2026), 59.
		
		\bibitem{EH}
		T. Eguchi and G. Vergara-Hermosilla.
		\emph{ Liouville theorems for fractional Navier-Stokes equations via the radial velocity component}.
		Bull. Sci. Math., {\bf213} (2026), 103953.
		
		\bibitem{Galdi}
		G.~P. Galdi,
		\emph{An Introduction to the Mathematical Theory of the Navier--Stokes Equations: Steady-State Problems},
		2nd ed., Springer, New York, 2011.
		
		\bibitem{GSU}
		T. Ghosh, M. Salo and G. Uhlmann,
		\emph{The Calder\'on problem for the fractional Schr\"odinger equation},
		Anal. PDE \textbf{13} (2020), no. 2, 455--475.
		
		
		\bibitem{Jarrin2020}
		O. Jarr\'in,
		\emph{A remark on the Liouville problem for stationary Navier--Stokes equations in Lorentz and Morrey spaces},
		J. Math. Anal. Appl. \textbf{486} (2020), no.~1, 123871.
		
		\bibitem{JarrinVergara2024}
		O. Jarr\'in and G. Vergara-Hermosilla,
		\emph{An $L^p$-theory for fractional stationary Navier--Stokes equations},
		J. Elliptic Parabol. Equ. \textbf{10} (2024), 859--898.
		
		\bibitem{KozonoTerasawaWakasugi2017}
		H. Kozono, Y. Terasawa and Y. Wakasugi,
		\emph{A remark on Liouville-type theorems for the stationary Navier--Stokes equations in three space dimensions},
		J. Funct. Anal. \textbf{272} (2017), no.~2, 804--818.
		
		\bibitem{Kwa}
		M. Kwa\'snicki,
		\emph{Ten equivalent definitions of the fractional Laplace operator},
		Fract. Calc. Appl. Anal. \textbf{20} (2017), no. 1, 7--51.
		
		\bibitem{LeeLee2026}
		J. Lee and J. Lee,
		\emph{Liouville-type theorems for the stationary fractional Navier--Stokes equations in $\R^n$},
		arXiv:2606.28959v1, 2026.
		\url{https://arxiv.org/abs/2606.28959}
		
		\bibitem{ON}
		R. O'Neil,
		\emph{Convolution operators and $L(p,q)$ spaces},
		Duke Math. J. \textbf{30} (1963), 129--142.
		
		\bibitem{ROS}
		X. Ros-Oton and J. Serra,
		\textit{The Dirichlet problem for the fractional Laplacian: Regularity up to the boundary},
		J. Math. Pures Appl. \textbf{101}(3) (2014), 275--302.
		
		
		\bibitem{Seregin2016}
		G. Seregin,
		\emph{Liouville type theorem for stationary Navier--Stokes equations},
		Nonlinearity \textbf{29} (2016), 2191--2195.
		
		\bibitem{Seregin2019}
		G. Seregin,
		\emph{Remarks on Liouville type theorems for steady-state Navier--Stokes equations},
		St. Petersburg Math. J. \textbf{30} (2019), no.~2, 321--328.
		
		\bibitem{SereginWang2020}
		G. Seregin and W. Wang,
		\emph{Sufficient conditions on Liouville type theorems for the 3D steady Navier--Stokes equations},
		St. Petersburg Math. J. \textbf{31} (2020), 387--393.
		
		\bibitem{Stein}
		E. M. Stein,
		\emph{Singular Integrals and Differentiability Properties of Functions},
		Princeton University Press, Princeton, NJ, 1970.
		
		\bibitem{Tsai2021}
		T.-P. Tsai,
		\emph{Liouville type theorems for stationary Navier--Stokes equations},
		Partial Differ. Equ. Appl. \textbf{2} (2021), article~10.
		
		\bibitem{WG}
		P. Wang and Z. Guo,
		\emph{Some Liouville-Type Results for the 3D Incompressible MHD Equations},
		J. Geom. Anal., \textbf{34} (2024), article~39.
		
		
		\bibitem{WangYang2025}
		W. Wang and G. Yang,
		\emph{Liouville type theorems for the 3D stationary MHD and Hall-MHD equations with non-zero constant vectors at infinity},
		Nonlinearity \textbf{38} (2025), 065007.
		
		\bibitem{WangYangYu2025}
		W. Wang, G. Yang and J. Yu,
		\emph{Liouville type theorems for the fractional Navier--Stokes equations without the integrability condition of velocity in $\R^3$},
		arXiv:2505.04895, 2025.
		\url{https://arxiv.org/abs/2505.04895}
		
		\bibitem{WangXiao2018}
		Y. Wang and J. Xiao,
		\emph{A Liouville problem for the stationary fractional Navier--Stokes--Poisson system},
		J. Math. Fluid Mech. \textbf{20} (2018), 485--498.
		
		\bibitem{Yang2022}
		J. Yang,
		\emph{On Liouville type theorem for the steady fractional Navier--Stokes equations in $\R^3$},
		J. Math. Fluid Mech. \textbf{24} (2022), article~81.
		
		
		\bibitem{Zeng2025}
		Y. Zeng,
		\emph{On Liouville-type theorems for the 3D stationary fractional MHD system in anisotropic Lebesgue spaces},
		Z. Angew. Math. Phys. \textbf{76} (2025), article~181.
		
		\bibitem{Zhang}
		Z. Zhang,
		\emph{New Liouville type theorems for 3D steady incompressible MHD equations and Hall-MHD equations},
		arXiv:2503.13202v3, 2025.
		\url{https://doi.org/10.48550/arXiv.2503.13202}
		
	\end{thebibliography}
\end{document}